\algnewcommand\True{\textbf{true}\space}
\algnewcommand\False{\textbf{false}\space}
\theoremstyle{definition}
\newtheorem{theorem}{Theorem}[section]
\newtheorem{proposition}[theorem]{Proposition}
\newtheorem{corollary}[theorem]{Corollary}
\newtheorem{conjecture}[theorem]{Conjecture}
\newtheorem{lemma}[theorem]{Lemma}
\newtheorem{example}[theorem]{Example}
\newtheorem{remark}[theorem]{Remark}
\newtheorem{definition}[theorem]{Definition}
\newtheorem{notation}[theorem]{Notation}
\newcommand{\qf}[1]{\langle #1\rangle}
\newcommand{\laurent}[1]{(\!(#1)\!)}
\newcommand{\Pfister}[1]{\langle\!\langle#1\rangle\!\rangle}
\newcommand{\second}{\unit{s}}
\newcommand{\minute}{\unit{min}}
\newcommand{\hour}{\unit{h}}
\newcommand{\R}{\ensuremath{\mathbb{R}}}
\newcommand{\N}{\ensuremath{\mathbb{N}}}
\newcommand{\F}{\ensuremath{\mathbb{F}}}
\newcommand{\C}{\ensuremath{\mathbb{C}}}
\newcommand{\Z}{\ensuremath{\mathbb{Z}}}
\newcommand{\Q}{\ensuremath{\mathbb{Q}}}
\newcommand{\LL}{\ensuremath{\mathbb{L}}}
\DeclareMathOperator{\spn}{span}
\DeclareMathOperator{\rad}{rad}
\DeclareMathOperator{\Mat}{Mat}
\DeclareMathOperator{\disc}{disc}
\DeclareMathOperator{\Sup}{sup}
\DeclareMathOperator{\Inf}{inf}
\DeclareMathOperator{\Log}{log}
\DeclareMathOperator{\Min}{min}
\DeclareMathOperator{\Max}{max}
\DeclareMathOperator*{\Ast}{\raisebox{-0.6ex}{\scalebox{2}{$\ast$}}}
\newcommand{\tcap}[2]{%
	\underset{\text{\tiny#1}}{%
	#2}%
}
\begin{document}

\begin{frontmatter}

\title{Normal Quaternionic Matrices and Finitely Generated Witt Rings} 

\author[affiliation1]{Nico Lorenz}
\ead{nico.lorenz@ruhr-uni-bochum.de}
 
\author[affiliation2]{Alexander Schönert}
\ead{alexander.schoenert@tu-dortmund.de}

\address[affiliation1]{
        {Fakultät für Mathematik, Ruhr Universität Bochum},
		{Universitätsstrasse 150,}
		{44780 Bochum,}
		{Germany}}

\address[affiliation2]{
        {Fakultät für Mathematik, Technische Universität Dortmund}, 
		{Vogelpothsweg 87,}
		{44227 Dortmund,}
		{Germany}}


\begin{abstract}
	We present a new approach to verify the Elementary Type Conjecture for abstract Witt rings $(W, G)$ with small $n = \log_2(|G|)$. 
	To do so, we make use of an abstract analogue of the 2-torsion part of the Brauer group. 
	This enables us to describe the entire structure of an abstract Witt ring with $|G| = 2^n$ in terms of a unique $(n \times n)$-matrix. 
	Via computational search, we find all these matrices for $n$ up to $7$, confirming the Elementary Type Conjecture for such $n$.
\end{abstract}

\begin{keyword}
Quadratic Forms \sep Abstract Witt Rings \sep Quaternion Algebras \sep Elementary Type Conjecture

\MSC[2020] 11E81 \sep 16K50
\end{keyword}

\end{frontmatter}

\section{Introduction}\label{sec_Intro}

Throughout this article, if not stated otherwise explicitly, let $F$ denote a field of characteristic not 2.
It can be seen with ease that fields of very different nature may look identically from a quadratic form perspective.
For example, fields may be of different cardinality or characteristic but still may have isomorphic Witt rings. 
It thus seems to be interesting to build an axiomatic theory of quadratic forms that does not rely on an underlying field.
To this end there have been abstract approaches for more than 50 years now, starting with the work of Knebusch, Rosenberg, and Ware in the early 1970s (\cite{KnebuschRosenbergWare_WittRings}).
Over the years, several other approaches focusing on different aspects of the theory of quadratic form were introduced and studied by numerous authors, e.g. \emph{quadratic form schemes} as introduced by Cordes \cite{Cordes_FiniteNoQuaternions}, \emph{abstract Witt rings}, see \cite{Marshall_AbstractWittRings}, or \emph{special groups}, see \cite{DickmannMiragliaSpecialGroups}.

In this article, we mainly try to grasp the ring structure of Witt rings by mimicking the behavior of 2-fold Pfister forms and their associated quaternion algebras, leading to \emph{quaternionic structures} as in Marshall's book from 1980 \cite{Marshall_AbstractWittRings}, which will also serve as our main reference for basic notation and definitions.
The idea of looking at Witt rings from this perspective has its origin in the following result.

\begin{proposition}[{Cordes, \cite[2.2/2.3]{Cordes_WittGroupEquivalenve}}] \label{quadratic equivalence}
    Let $E, F$ be fields with square class groups $G_E := E^\ast / E^{\ast2}$ and $G_F := F^\ast / F^{\ast 2}$. 
    The Witt rings of $E$ and $F$ are isomorphic as rings if and only if there is a group isomorphism $\Phi: G_E \to G_F$ with $\Phi(-E^{\ast 2}) = -F^{\ast 2}$ preserving the isomorphism type of quaternion algebras, that is,
    \[\forall a,b,c,d \in G_E: \quad (a,b)_E \cong (c,d)_E \iff (\Phi(a), \Phi(b))_F \cong (\Phi(c), \Phi(d))_F.\]
\end{proposition}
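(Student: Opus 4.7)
My plan is to prove the two implications separately, with $(\Leftarrow)$ being the more substantive one.

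\textbf{Forward direction $(\Rightarrow)$.} Given a ring isomorphism $\phi\colon W(E)\to W(F)$, I would first argue that $\phi$ carries the fundamental ideal $I(E)$ onto $I(F)$, using the intrinsic characterization of $I$ as the kernel of the unique surjection $W(E)\twoheadrightarrow\Z/2\Z$ sending $\qf{1}$ to $1$. Consequently $\phi$ induces an isomorphism $I(E)/I^2(E)\to I(F)/I^2(F)$, and composing with Pfister's group isomorphism $G_E\xrightarrow{\sim} I(E)/I^2(E)$, $a\mapsto\Pfister{a}+I^2(E)$, produces the required $\Phi\colon G_E\to G_F$. The identity $\Pfister{-1}=\qf{1,1}=\qf{1}+\qf{1}$ is preserved by any ring homomorphism, giving $\Phi(-E^{\ast 2})=-F^{\ast 2}$. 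For quaternion algebras, I would use that $(a,b)_E\cong(c,d)_E$ iff $\Pfister{a,b}=\Pfister{c,d}$ in $W(E)$; writing $\phi(\Pfister{x})=\Pfister{\Phi(x)}+\eta_x$ with $\eta_x\in I^2(F)$, expansion yields $\phi(\Pfister{a,b})\equiv\Pfister{\Phi(a),\Phi(b)}\pmod{I^3(F)}$. Applying $\phi$ to $\Pfister{a,b}=\Pfister{c,d}$ therefore puts $\Pfister{\Phi(a),\Phi(b)}-\Pfister{\Phi(c),\Phi(d)}$ inside $I^3(F)$, and since the Clifford (Hasse) invariant vanishes on $I^3$, the quaternion algebras on the right-hand side must agree.

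\textbf{Backward direction $(\Leftarrow)$.} Given $\Phi$ with the stated properties, define $\phi\colon W(E)\to W(F)$ on diagonal representatives by $\phi(\qf{a_1,\ldots,a_n})=\qf{\Phi(a_1),\ldots,\Phi(a_n)}$. The essential step is well-definedness. I would invoke Witt's chain equivalence theorem to reduce the problem to binary forms: any two isometric diagonal forms of the same rank are connected by a sequence of binary substitutions $\qf{a_i,a_j}\leftrightarrow\qf{a_i',a_j'}$. Binary forms are classified up to isometry by discriminant and quaternion algebra, i.e.\ $\qf{a,b}\cong\qf{c,d}$ iff $ab\equiv cd\pmod{F^{\ast 2}}$ and $(a,b)_F\cong(c,d)_F$. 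The first invariant is preserved because $\Phi$ is a group homomorphism, the second by hypothesis, so each elementary step survives. Hence $\phi$ is well-defined on isometry classes and descends to $W(E)$ once one checks that the hyperbolic-plane relation $\qf{a,-a}=0$ is respected, which follows from $\Phi(-a)=\Phi(-1)\Phi(a)=-\Phi(a)$. Compatibility with $\oplus$ and $\otimes$ is formal.

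\textbf{Main obstacle.} The forward direction is mostly bookkeeping once the classical toolkit is in place: the intrinsic description of $I$, Pfister's $I/I^2\cong G$, and the fact that $I^3$ lies in the kernel of the Clifford invariant (Arason--Pfister). The heart of the argument is the well-definedness in the backward direction, which rests on the binary classification $\qf{a,b}\cong\qf{c,d}\iff ab\equiv cd\pmod{F^{\ast 2}}\text{ and }(a,b)_F\cong(c,d)_F$. Treating this as classical, the remaining compatibilities are straightforward.
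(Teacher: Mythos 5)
The paper does not prove this proposition at all: it is quoted as a known result with a citation to Cordes, so there is no in-paper argument to compare against. Judged on its own, your reconstruction is the standard proof of the Harrison--Cordes criterion and is essentially correct. In the forward direction, the intrinsic characterization of $I$ as the kernel of the unique ring surjection onto $\Z/2\Z$, the isomorphism $G_E\cong I(E)/I^2(E)$ via $a\mapsto\Pfister{a}+I^2$, the congruence $\phi(\Pfister{a,b})\equiv\Pfister{\Phi(a),\Phi(b)}\pmod{I^3(F)}$, and the vanishing of the Clifford invariant on $I^3$ are exactly the right ingredients; in the backward direction, Witt chain equivalence plus the classification of binary forms by determinant and quaternion algebra is the correct reduction. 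Two points are left implicit that you should at least mention: the displayed equivalence is an ``if and only if,'' so in the forward direction you must also run the argument for $\phi^{-1}$ (whose induced square-class map is $\Phi^{-1}$) to get the converse implication; and in the backward direction you never verify that $\phi$ is bijective, which again follows by applying the same construction to $\Phi^{-1}$ and checking the two maps compose to the identity on diagonal forms. Both are formal, but a complete write-up should say so.
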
 

It is not very difficult to see and well known that the theories of abstract Witt rings, quaternionic structures and special groups are equivalent, meaning that models from one theory can be constructed from models of the other theories canonically, see \cite{DickmannMiragliaSpecialGroups, KSS_Equivalences, Marshall_AbstractWittRings}.


As a first step to understand these theories, it is natural to try to understand the finitely generated objects.
All so far known examples of finitely generated Witt rings are of so called \emph{elementary type}, meaning that they can be derived from certain Witt rings only using two simple construction types. 
This gives rise to the \emph{Elementary Type Conjecture} (cf. \Cref{elementary type conjecture}) stating that in fact all finitely generated Witt rings are of such type.

In fact, the Elementary Type Conjecture is just the most basic version of a series of conjectures of the same spirit.
It can be shown that it directly translates to a conjecture about the Galois groups $\mathsf{Gal}(F_2 / F)$, where $F$ is a field of characteristic not 2 with finite square class group $F^\ast / F^{\ast2}$ and $F_2$ its quadratic closure, which has obvious generalizations to other prime numbers than 2, see the discussion following \Cref{conj_weakETC} for more details.

It is known that the Elementary Type Conjecture holds for Witt rings and thus quaternionic structures over fields $F$ of small \emph{order} $n := \Log_2(|F^\ast / F^{\ast 2}|) \in \N_0$. 
The case $n \le 2$ can be taken care of trivially (cf. \Cref{small quaternionic structures}), while the case $n = 3$ was first done by Kula, Szczepanik and Szymiczek \cite{KulaSzczepanikSzymiczek_8SquareClasses} and the case $n \in \{4, 5\}$ by Carson and Marshall \cite{CarsonMarshall_Decomposition}. 
The case $n = 4$ was also dealt with by Szczepanik in 1985 independently (\cite{Szczepanik_RadicalNotExceeding16}). 

In this paper, we introduce a new perspective involving an abstract version of the $2$-torsion part of the Brauer group and developing \emph{normal quaternionic matrices} which uniquely display quaternionic structures. 
We use the abstract 2-torsion part of the Brauer group to prove that the Elementary Type Conjecture also holds for quaternionic structures of orders $6$ and $7$ that satisfy the axioms given in \cite{MarshallYucasAP3}.
In particular, this shows that all Witt rings of fields with up to $2^7$ square classes are of elementary type.
 
While the theoretical parts from \Cref{sec_QuatStruc} to \Cref{sec_NQM} were developed by the second author in terms of his PhD thesis, the computational approach described in \Cref{sec_Computation} was obtained by a joint effort of the authors.

We now give a brief outline of the structure of the article at hand.
In \Cref{sec_QuatStruc}, we introduce our notation and give the definition of \emph{quaternionic structures}. 
We use \Cref{sec_ETC} to collect the necessary ingredients to state the \emph{Elementary Type Conjecture} along with some counting arguments that are needed for the computations later on. 
In \Cref{sec_2Brauer}, we construct an abstract analogue to the $2$-torsion part of the Brauer group and discuss the Arason-Pfister property. 
In \Cref{sec_QM}, we define so called \emph{quaternionic matrices} as a means to efficiently compute and describe quaternionic structures of small order. 
We also introduce the already mentioned \emph{normal quaternionic matrices} which are then further investigated in \Cref{sec_NQM}. 
Finally, in the last \Cref{sec_Computation}, we present the algorithm and the computational means by which we prove our main result.

\section{Quaternionic Structures}\label{sec_QuatStruc}

The abstract point of view on quadratic form theory via certain axiomatic settings may not always be intuitive for readers with a classical background in the subject.
We recommend Lam's standard textbook in \cite{Lam_IntroQF} both for an extensive overview of the classical theory and for the brief introduction into the axiomatic approach in chapter XII, section 8 that may help with getting used to the lack of fields.
On top of that, we point out that the notation and terminology applied here aligns with that of \cite{Lam_IntroQF}.
Our main reference for the abstract foundations is Marshall's book \cite{Marshall_AbstractWittRings} which remains the most detailed account on the topic as of time of writing this.
However, as this book has become somewhat difficult to obtain, we try to include a number of other references where some of the results mentioned here can be found.
For a summary of the results of the 1980s and 1990s in abstract quadratic form theory regarding the elementary type conjecture, we recommend to consult Marshall's survey \cite{Marshall_ETC}.

Throughout this paper, we often consider groups $G$ of exponent dividing $2$. The most prominent example for us will be the square class group of a field.
We fix our notation for these:

\begin{definition}
    Let $G$ be a group.
    We call $G$ \emph{boolean} if every element of $G$ is its own inverse, meaning that the exponent of $G$ is $1$ or $2$.
    We then refer to the \emph{dimension of $G$} as its dimension as a vector space over $\F_2$.
    Clearly, this dimension is finite if and only if $G$ is finite and in this case we have $|G|=2^n$ for some $n\in \N_0$.
    In the same way, we define all terms regarding vector spaces like \emph{linear (in)dependence}, \emph{span}, or \emph{basis} of $G$.
\end{definition}

Quaternionic structures as defined below imitate the properties of isomorphism types of quaternion algebras over a given field of characteristic different from $2$.

\begin{definition}
    Let $G$ be a multiplicative boolean group with neutral element $1\in G$ and distinguished element $-1 \in G$ (the case $-1 = 1$ being possible). 
    Write $-a := ( -1 ) a$ for $a\in G$. 
    Let $Q$ be a set with distinguished element $0\in Q$ and $q : G \times G \to Q$ a surjective map.
    We call the elements of $Q$ \emph{quaternions}.
    We say that $S:=(G,-1,q)$ is a \emph{quaternionic structure} if the following hold for all $a,b,c,d\in G$:
    \begin{enumerate}[label=(Q\arabic*)]
        \item\label{QS1} $q(a, -a) = 0$
        \item\label{QS2} $q(a, b) = q(b, a)$
        \item\label{QS3} $q(a, b) = q(a, c) \iff q(a, bc) = 0$
        \item\label{QS4} $q(a, b) = q(c, d) \iff \exists x \in G:\ q(a, b) = q(a, x) = q(c, x) = q(c, d)$
    \end{enumerate}
    Furthermore, we define the \emph{radical} of $S$ as 
    \[\rad(S):=\{a\in G\mid q(a,x)=0\ \forall x\in G\}.\] 
    We say that $S$ is \emph{degenerate} if $\rad(S)\not=\{1\}$, and else \emph{nondegenerate}.
    If $G=\rad(S)$, we say that $S$ is \emph{totally degenerate}.
    We refer to $\dim(G)$ as the \emph{order} of $S$. 
    Note that we sometimes refer to $G$ as the \emph{square class group (of $S$)}.
\end{definition}

We include some well-known properties of quaternionic structures which we frequently make use of.
We include the proof for the reader's convenience.

\begin{lemma}\label{lem:basic_properties_qs}
    Let $S=(G,-1,q)$ be a quaternionic structure.
    The following hold for all $a,b,c,d\in G$:
    \begin{enumerate}[label=(\alph*)]
        \item\label{lem:basic_properties_qs_1} $q(a,b)=q(a,-ab)=q(b,-ab)$
        \item\label{lem:basic_properties_qs_2}
        $q(a,1)=0$ and $q(a,a)=q(a,-1)$
        \item\label{lem:basic_properties_qs_3} $q(a,b)=q(c,d) \iff \ \exists x\in G:\ 0=q(a,bx)=q(ac,x)=q(c,dx)$
    \end{enumerate}
\end{lemma}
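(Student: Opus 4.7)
The plan is to treat the three items in sequence, using $g^{2}=1$ freely since $G$ is boolean and combining \ref{QS1}--\ref{QS4} with elementary rewrites.

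For part (a), the key observation is that \ref{QS3} reduces the desired equality $q(a,b)=q(a,-ab)$ to the single identity $q\bigl(a,\,b(-ab)\bigr)=0$; since $b^{2}=1$, the argument simplifies to $-a$, and \ref{QS1} finishes the job. The second equality $q(a,b)=q(b,-ab)$ then follows by first applying \ref{QS2} to swap $a$ and $b$, giving $q(a,b)=q(b,a)$, and then invoking the identity just proven with the roles interchanged.

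For part (b), I would simply specialize (a). Setting $b=-a$ gives $q(a,-a)=q\bigl(a,(-a)(-a)\bigr)=q(a,1)$, while \ref{QS1} yields $q(a,-a)=0$; hence $q(a,1)=0$. Setting $b=a$ gives $q(a,a)=q(a,-a\cdot a)=q(a,-1)$.

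For part (c), both implications come from combining \ref{QS4} with \ref{QS3}. In the forward direction, \ref{QS4} supplies an $x$ with $q(a,b)=q(a,x)=q(c,x)=q(c,d)$; I then translate each of these three consecutive equalities into a vanishing quaternion via \ref{QS3}, obtaining $q(a,bx)=0$ and $q(c,dx)=0$ immediately, while the middle one requires first applying \ref{QS2} to rewrite $q(a,x)=q(c,x)$ as $q(x,a)=q(x,c)$, after which \ref{QS3} delivers $q(x,ac)=0$, i.e. $q(ac,x)=0$. The backward direction runs the same chain in reverse, reconstructing $q(a,b)=q(a,x)=q(c,x)=q(c,d)$ and concluding by transitivity. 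There is no real obstacle in any of this; the only point demanding a moment's care is the symmetric rewrite underlying the middle equality in (c).
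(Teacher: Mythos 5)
Your proof is correct and follows essentially the same route as the paper: (a) via \ref{QS1} and \ref{QS3}, (b) by specializing (a), and (c) by translating the chain of equalities in \ref{QS4} through \ref{QS3}. The only cosmetic difference is that the paper obtains (b) from (a) by taking $b\in\{1,-1\}$ whereas you take $b\in\{-a,a\}$, which amounts to the same computation.
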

\begin{proof}
    The properties in \ref{lem:basic_properties_qs_1} follow from \ref{QS1} and \ref{QS3} as $b\cdot (-ab)=-a$ and $a\cdot (-ab)=-b$.
    Now \ref{lem:basic_properties_qs_2} is just the case $b\in \{1,-1\}$.
    Lastly, \ref{lem:basic_properties_qs_3} is a direct application of \ref{QS3} on the equivalence in \ref{QS4}.
\end{proof}

In light of \Cref{lem:basic_properties_qs}\ref{lem:basic_properties_qs_3} above, it is a simple exercise to show that the condition given in the following definition is equivalent to that in \Cref{quadratic equivalence} and thus aligns with the classical case:

\begin{definition}
    Let $S = (G, -1, q), S'=(G',-1',q')$ be quaternionic structures.
    We say that $S$ and $S'$ are \emph{isomorphic}, in symbols $S\cong S'$, if there exists a group isomorphism $\phi: G \to G'$ satisfying $\phi(-1) = -1'$ and for all $a,b\in G$, we have
    \[q(a,b) = 0 \iff q'( \phi(a), \phi(b) ) = 0.\]
\end{definition}

\begin{remark}\label{rem:field_defines_qs}
    Any field $F$ of characteristic not $2$ defines a quaternionic structure, using $G_F = F^\ast / F^{\ast 2}, -1 = -F^{\ast2}$ and $q_F(a,b) := (a,b)_F$, the quaternion algebra with slots $a$ and $b$ over $F$ (for \ref{QS4}, see \cite[Common Slot Theorem III.4.13]{Lam_IntroQF}). 
    Here, the zero-element in $Q$ can be viewed as the split quaternion algebra or the zero in the Witt ring. 
    We write $S_F:=(G_F, -F^{\ast 2}, q_F)$.
\end{remark}

We include some basic examples (cf. \cite[Example 1.2]{Marshall_ETC}), also to fix notation:

\begin{example}\label{small quaternionic structures}
\begin{itemize}
\item There is only one quaternionic structure of order zero, denoted as $\LL_0$, and defined by the unique map $\{1\}\times\{1\}\to \{0\}$. 
It is realized as $S_F$ for any quadratically closed field $F$ of characteristic different from $2$, for example $\LL_0\cong S_\C$. 
This structure is both totally degenerate and nondegenerate by definition.
\item There are exactly two isomorphism types of totally degenerate quaternionic structures of order $n\in \N$, depending on the choice whether or not $-1=1$ in the square class group $G$ and defined by the unique map $G\times G\to \{0\}$. 
For $n = 1$, we denote this structure by $\LL_{1,0}$ if $-1 = 1$ and $\LL_{1,1}$ if $-1 \neq 1$.
\item There are three quaternionic structures $S=(\{1,a\},-1,q)$ of order $1$: 
Firstly, if $-1=1$, then 
$q(1,1) = q(1,a) = q(a,1) = q(a,a) = 0$, 
so $S\cong \LL_{1,0}$. 
Secondly, if $-1 = a$ and $S\not\cong \LL_{1,1}$, then it can be easily verified that $S\cong S_\R=:\LL_{1}$. 
Note that $\LL_{1,0}$ and $\LL_{1,1}$ are realized by finite fields of cardinality $\equiv_4 1$ resp. $\equiv_4 3$.
\end{itemize}
\end{example}

Defining the Witt ring of a quaternionic structure works very much in the same way as in the classical theory. 
We briefly collect the necessary definitions for abstract quadratic forms which correspond to \emph{nondegenerate} and \emph{diagonalized} quadratic forms in the classical setting.

\begin{definition}
    Let $S=(G,-1,q)$ be a quaternionic structure.
    \begin{itemize}
        \item 
        A \emph{(quadratic) form over $S$} of \emph{dimension} $n\in \N$ is an $n$-tuple, denoted $\qf{a_1,\ldots,a_n}\in G^n$.
        \item 
        For quadratic forms $\varphi=\qf{a_1,\ldots,a_n}$ and $\psi:=\qf{ b_1,\ldots,b_m}$, the \emph{sum} and (tensor) \emph{product} are defined as
        $\varphi\perp\psi:=\qf{a_1,\ldots,a_n,b_1,\ldots,b_m}$
        and
        $\varphi\otimes \psi:=\perp_{i=1}^m b_i\varphi:=\perp_{i=1}^m \qf{b_ia_1,\ldots,b_ia_n}$.
        We also write $k\times \varphi$ for the $k$-fold sum $\varphi\perp\ldots\perp\varphi$.
        \item 
        \emph{Isometry} of quadratic forms, denoted by $\cong$, is defined inductively as in \cite[§I, Exercise 19]{Lam_IntroQF} or \cite[1.15]{Marshall_AbstractWittRings}:
        
        For any $a,b,c,d\in G$, we have $\qf{a}\cong \qf{b}$ if and only if $a=b$, and $\qf{a,b}\cong \qf{c,d}$ if and only if $ab=cd \text{ and } q(a,b)=q(c,d)$.
        Two forms $\varphi=\qf{a_1,\ldots,a_n},\ \psi=\qf{b_1,\ldots,b_n}$ of the same dimension $n>2$ are isometric if and only if there are $a,b\in G$ and a form $\chi$ of dimension $n-2$ such that 
        \[\qf{a_1,a}\cong \qf{b_1,b},\ \qf{a_2,\ldots,a_n}\cong \qf{a}\perp \chi,\ \qf{b_2,\ldots,b_n}\cong \qf{b}\perp\chi.\]
        This defines an equivalence relation as was shown by Marshall in \cite[2.6]{Marshall_AbstractWittRings} (also in \cite[XII.8.21]{Lam_IntroQF}).
        \item
        The \emph{hyperbolic plane} $\mathbb{H}:=\qf{1,-1}$ is the unique isotropic form of dimension $2$. 
        A quadratic form is called \emph{hyperbolic} if it is isometric to the $k$-fold sum of hyperbolic planes $k\times \mathbb{H}$ for some $k\in \N_0$. 
        \item 
        A quadratic form $\varphi$ is \emph{isotropic} if it is isometric to a form $\mathcal{H}\perp \psi$, else \emph{anisotropic}.
        \item 
        Given $n\in \N_0$ and $a_1,\ldots,a_n\in G$, we call $\Pfister{a_1,\ldots,a_n}:=\bigotimes_{i=1}^n \qf{1,-a_i}$ an \emph{$n$-fold Pfister form}.
        \item 
        Write $D_S(\varphi):=\{a\in G\mid \varphi\cong \qf{a,\ldots}\}$ for the \emph{represented elements} of a form $\varphi$ over $S$.
        \item 
        The \emph{value group of $a\in G$} is defined as
        $V(a):=D_S(\Pfister{a})$ 
        and can also be written as 
        $\{b\in G\mid q(a,b)=0\}$. 
        Note that in literature, similar to the ambiguity of the sign convention for Pfister forms, this is sometimes defined as $V(-a)$.
    \end{itemize}
\end{definition}

We do not explicitly need the notion of abstract Witt rings in order to execute our computations in \Cref{sec_Computation}.
However, we give some context in order to embed the theory presented here into its proper context.
This will be of importance for introducing the abstract version of the $2$-torsion part of the Brauer group in \Cref{sec_2Brauer}.
For a more detailed look on axioms for abstract Witt rings, we recommend \cite{Kula_FinitelyGeneratedWittRings}.

\begin{remark}\label{rem:Witt_rings_of_qs}
    Having established that isometry in the above sense indeed defines an equivalence relation, the following results of \cite{Marshall_AbstractWittRings} can mostly be proven in the same way as in the classical theory. 
    Let $S=(G,-1,q)$ be a quaternionic structure and $\varphi,\psi,\chi$ forms over $S$.
    The following hold:
    \begin{itemize}
        \item \emph{Witt cancellation}: We have $\varphi\cong \psi$ if and only if $\varphi\perp\chi\cong \psi\perp \chi$
        \item There is a unique anisotropic form $\varphi_\text{an}$ (the \emph{anisotropic part of $\varphi$}) and a unique $k\in \N_0$ (the \emph{Witt index of $\varphi$}) such that $\varphi\cong \varphi_\text{an}\perp k\times \mathcal{H}$.
        This defines an equivalence relation (\emph{Witt equivalence}) via \[\varphi\sim \psi\iff \varphi_\text{an}\cong \psi_\text{an}.\]
    \end{itemize}
    Now, as in \cite{Lam_IntroQF}, the \emph{Witt ring $W(S)$ of $S$} can be defined as the set of Witt equivalence classes of forms over $S$ along with the binary operations derived from the addition and multiplication of forms.
    
    In abuse of notation, we usually identify quadratic forms over $S$ with their Witt equivalence class.
\end{remark}

As in the classical theory, Pfister forms prove to be crucial in understanding the structure of Witt rings.
We collect the most important facts here.
Once again, proofs can be found in \cite{Marshall_AbstractWittRings} while many properties are also considered in detail in \cite{Szczepanik_QFSnontrivialrad}.

\begin{remark}\label{rem:Pfister_forms_and_fundamental_ideal}
    Let $S=(G,-1,q)$ be a quaternionic structure with Witt ring $W(S)$.
    \begin{enumerate}[label=(\alph*)]
    \item\label{rem:Pfister_forms_and_fundamental_ideal1}
    The following hold:
    \begin{itemize}
        \item The set of represented elements of a Pfister form is a subgroup of $G$.
        In particular, the value group $V_S(a)$ is indeed a group for all $a\in G$.
        \item A Pfister form is either anisotropic or hyperbolic.
        \item $q(a,b)=q(c,d)\iff \Pfister{a,b}\cong\Pfister{c,d}\iff (1-a)(1-b)=(1-c)(1-d)\in W(S)$ for all $a,b,c,d\in G$
    \end{itemize}
    
    \item 
    It can also be verified as in the classical theory that the set of Witt equivalence classes of even dimensional forms in $W(S)$ defines the unique prime ideal of characteristic $2$ in $W(S)$, called the \emph{fundamental ideal} $I(S)$.
    For any $k\in \N_0$, the $k$-th power $I^k(S)$ of the fundamental ideal is additively generated by the Witt equivalence classes of $k$-fold Pfister forms.
    
    \item\label{rem:Pfister_forms_and_fundamental_ideal2}
    We refer to the following as the \emph{Arason-Pfister property} for $k\in \N_0$:
    \begin{align}
    \text{If  $\varphi\in I(S)^k\setminus\{0\}$ is anisotropic, then } \dim(\varphi)\ge 2^k \tag{$AP_k$}\label{eq_APk}
    \end{align}
    In the case when $S$ is a quaternionic structure of a field, this property holds by a classical result of Arason and Pfister \cite[Hauptsatz]{ArasonPfister_Krull}.

    For arbitrary quaternionic structures $S$, $AP_0$ holds trivially and $AP_1$ holds since all forms in $I(S)$ are of even dimension.
    As in the classical theory, it can be shown that the forms in $I^2(S)$ are exactly the forms $\varphi$ of even dimension with trivial \emph{discriminant} $\disc{\varphi}:=(-1)^{\frac{n(n-1)}{2}} \det(\varphi)$ where $n:=\dim(\varphi)$ (\cite[3.9]{Marshall_AbstractWittRings} and in a slightly different context \cite{MarshallYucasAP3}).
    This means that the forms of dimension $2$ in $I^2(S)$ can be written as $\qf{a,b}$ with $1=\disc{\qf{a,b}}=-ab$, so $b=-a$ and $\qf{a,b}\cong \mathcal{H}=0\in W(S)$.
    Hence, also $AP_2$ holds in $W(S)$.
    
    In \Cref{sec_2Brauer}, we state results of Marshall and Yucas \cite{MarshallYucasAP3} who prove $AP_3$ in a slightly specified context.
    
    It is an open question whether or not $AP_k$ holds for $W(S)$ for any $k\ge 3$ for quaternionic structures as defined here.
    \end{enumerate}
\end{remark}

\section{The Elementary Type Conjecture}\label{sec_ETC}

The Elementary Type Conjecture (ETC) as first stated by Marshall \cite[Chapter 5, Problem 4]{Marshall_AbstractWittRings} makes a prediction what the complete list of quaternionic structure of finite order looks like. 
In particular, it is conjectured that every such structure can be constructed from simple building blocks using two types of constructions.
Before we introduce these in detail, we recall some invariants which are of a general interest in quadratic form theory and can easily be translated into the abstract setting:

\begin{definition}
Let $S=(G,-1,q)$ be a quaternionic structure. 
We define the \emph{level} of $S$ as 
\[s(S) := \Inf\{ k \in \N \mid -1 \in D_S(k\times \qf{1})\}\]
and the \emph{Pythagoras number} of $S$ as
\[p(S):=\Inf\{k\in \N\mid D_S(k\times \qf{1})=D_S((k+1)\times \qf{1})\}\] 
where $\Inf(\emptyset)=\infty$ and we say that $S$ is \emph{pythagorean} if $p(S)=1$. 
Furthermore, we define the \emph{$u$-invariant} of $S$ as
\[u(S):=\Sup\{\dim(\varphi)\mid \varphi \text{ is an anisotropic torsion form over } S\}\]
where \emph{torsion form} means that there exists $n\in \N$ such that $n\times\varphi$ is hyperbolic. 
In the classical sense, this is often referred to as the \emph{general $u$-invariant} (cf. \cite[Chapter XI]{Lam_IntroQF}).
\end{definition}

If $F$ is a field, note that the values $s(S_F)$ and $p(S_F)$ are the level and Pythagoras number of $F$.
In particular, we point out that the level of $S$ is generally not the ring theoretic level of the associated Witt ring. 

The aforementioned building blocks for the ETC will be the structures of order $1$ as seen in \Cref{small quaternionic structures} and those with exactly $2$ quaternions that are nondegenerate:

\begin{definition}
    A finite quaternionic structure is said to be a \emph{local type structure} or simply \emph{of local type} if it is nondegenerate of order at least $3$ with exactly $2$ quaternions.
\end{definition}

\begin{remark}\label{rem:local_types}
    Local type structures are explored in detail in \cite[chapter 5, section 3]{Marshall_AbstractWittRings}.
    The results mentioned in the following can be verified by elementary linear algebra means as only basic properties of symmetric bilinear maps are required.
    We give some additional detail in \Cref{prop:nqm_local_types} later on when we discuss how to represent local type structures by matrices.
    
    If $S=(G,-1,q)$ is a local type structure of order $n\in \N$, it is immediate that $q$ can be viewed as a nondegenerate symmetric bilinear map $q:G\times G\to \F_2$ by \ref{QS2}, \ref{QS3} and the aligning definitions of the term 'nondegenerate'.
    Conversely, given any nondegenerate symmetric bilinear map $q:G\times G\to \F_2$ satisfying the additional condition \ref{QS1} imposed on the distinguished element $-1\in G$ defines a quaternionic structure as \ref{QS4} is always satisfied.
    Hence, in order to determine all local type structures of order $n$, it suffices to determine all such nondegenerate symmetric bilinear maps.
    It turns out that there are three kinds of such maps and thus three kinds of local type structures (also cf. \cite[section 3]{Marshall_ETC}).
    We denote them as $\LL_n$ (unique local type structure of odd order $n$, satisfies $-1\neq 1$), $\LL_{n,0}$ (unique local type structure of even order $n$ with $-1=1$) and $\LL_{n,1}$ (unique local type structure of even order $n$ with $-1\neq 1$).
    By \cite[VI.2.22 to 2.29]{Lam_IntroQF}, these can be obtained as the quaternionic structures of suitable extensions of the $2$-adic numbers $\Q_2$ because
    \begin{itemize}
        \item any finite extension of $\Q_2$ of odd degree $m$ gives rise to a local type structure of order $m+2$ and thus to $\LL_{m+2}$
        \item any finite extension of $\Q_2$ of even degree $m$ gives rise to a local type structure of order $m+2$ with $-1=1$ if and only if this extension contains $\Q_2(\sqrt{-1})$, and thus to either $\LL_{m+2,0}$ or $\LL_{m+2,1}$.
    \end{itemize}
    In particular, the local type structures are exactly the quaternionic structures of dyadic local fields.
    The level, pythagoras number and $u$-invariant of local type structures can now be calculated directly or taken as well-known facts of the classical theory (cf. \cite{Lam_IntroQF}):
    All local type structures have $u$-invariant $4$ (\cite[XI.6.2]{Lam_IntroQF}) and for $k\in \N$, $k\ge 2$, we have
    \begin{itemize}
        \item $s(\LL_{2k-1})=4$,\ $p(\LL_{2k-1})=4$
        \item $s(\LL_{2k,0})=1$,\ $p(\LL_{2k,0})=2$
        \item $s(\LL_{2k,1})=2$,\ $p(\LL_{2k,1})=3$
    \end{itemize}
\end{remark}

Next we consider two ways to construct bigger quaternionic structures from our small building blocks.
The first one corresponds to the Laurent series extension $F\laurent{x}$ of a field $F$ of characteristic different from $2$ (cf. \cite[VIII.4.11]{Lam_IntroQF}.
This gives us a way to extend a structure of order $n$ to one of order $n+1$:

\begin{proposition}\label{prop:group_extension}
    Let $S=(G,-1,q)$ be a quaternionic structure and $G_x$ the direct product of $G$ with a cyclic group $\{1,x\}$ of order $2$, written as an inner direct product. 
    The \emph{group extension} $S[x]:=(G_x,-1,q_x)$ with quaternions in $Q\times G$,
    \[q_x(ax^\alpha,bx^\beta)=(q(a,b),(-1)^{\alpha\beta}a^\beta b^\alpha)\]
    and zero quaternion $(0,1)$ is a quaternionic structure. 
    Its Witt ring is the group ring $W(S)[C_2]$, where $C_2$ denotes the cyclic group of order 2.
    The value groups are $V_S(1) = G_x$, $V_S(a) = V(a)$ for all $a \in G \setminus \{1\}$ and $V_S(ax) = \{1, -ax\}$ for all $a \in G$.
    We often write $S\Delta := S[x]$.
    Constructing the group extension can be iterated and we write $S\Delta_n$ for the $n$-fold group extension of $S$ for any $n\in \N$. 
    
    If $S \cong S_F$ for a field $F$ of characteristic not $2$, then $S\Delta \cong S_{F\laurent{x}}$.
\end{proposition}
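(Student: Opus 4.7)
The plan is to verify the four axioms \ref{QS1}--\ref{QS4} for $S[x] = (G_x, -1, q_x)$ directly from the defining formula, then identify the Witt ring with the group ring $W(S)[C_2]$, read off the value groups, and finally specialise to the field case.

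Axioms \ref{QS1}--\ref{QS3} reduce componentwise to the corresponding axiom for $q$ together with an elementary identity in the boolean group $G_x$. For \ref{QS1}, the first component of $q_x(ax^\alpha, -ax^\alpha)$ vanishes by \ref{QS1} for $S$, and the second simplifies, using $a^2 = 1$ and $\alpha^2 = \alpha$, to $(-1)^{\alpha^2 + \alpha} = 1$. Axiom \ref{QS2} is immediate from the symmetry of $q$ and of the scalar factor $(-1)^{\alpha\beta} a^\beta b^\alpha$. For \ref{QS3}, the first-component equation is exactly \ref{QS3} for $S$, while the second-component equation rearranges, using that $G$ is boolean, to the condition that the second component of $q_x(ax^\alpha, bc\,x^{\beta+\gamma})$ equals $1$. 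The main obstacle is \ref{QS4}: I would split into cases by the $x$-exponents of the four arguments. When all four vanish, the common slot from \ref{QS4} for $S$ lifts directly. In the remaining cases, the first-component equation is still resolved by \ref{QS4} applied to $S$, while the $x$-exponent $\epsilon$ of the candidate slot $yx^\epsilon$ is then chosen so as to satisfy the polynomial identity on the second component; since this identity is $\F_2$-linear in $\epsilon$, a consistent choice always exists, and candidates of the form $1$, $-ab$, $x$, or $-ab\cdot x$ suffice to cover all parity patterns via \Cref{lem:basic_properties_qs}.

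With $S[x]$ established as a quaternionic structure, the value groups follow by inspection: the equation $q_x(ax^\alpha, gx^\gamma) = (0,1)$ requires both $q(a,g) = 0$ and $(-1)^{\alpha\gamma} a^\gamma g^\alpha = 1$, and a short case analysis on $\alpha \in \{0,1\}$ and on whether $a = 1$ yields the three listed cases. To identify the Witt ring, I would show by induction on dimension, using Witt cancellation and the two-dimensional isometry criterion applied to $q_x$, that every form over $S[x]$ can be written uniquely as $\varphi \perp x\psi$ with $\varphi, \psi$ forms over $S$, and that $\varphi \perp x\psi \sim 0$ iff $\varphi \sim 0 \sim \psi$; this abstract Springer-type decomposition, combined with matching the product rules dictated by the formula for $q_x$, yields the ring isomorphism $W(S[x]) \cong W(S)[C_2]$. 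Finally, when $S = S_F$, the classical isomorphism $F\laurent{x}^{\ast}/F\laurent{x}^{\ast 2} \cong G_F \times \{1,x\}$ and the standard Hilbert symbol formula over a Laurent series field agree with the data defining $q_x$, so the induced map between square class groups extends to an isomorphism $S_{F\laurent{x}} \cong S[x]$ of quaternionic structures.
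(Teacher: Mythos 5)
Your strategy is sound but genuinely different from the paper's: the paper gives no proof of this proposition at all, instead deferring to Marshall's construction of the group extension via its Witt ring and value groups and to the equivalence of the various descriptions of quaternionic structures (the explicit formula for $q_x$ is credited to Wagner, unpublished). Your route --- verifying \ref{QS1}--\ref{QS4} directly from the formula and then identifying $W(S[x])$ with $W(S)[C_2]$ through an abstract Springer-type decomposition $\varphi\perp x\psi$ --- is therefore more self-contained; the computations for \ref{QS1}--\ref{QS3}, the value groups, and the specialisation to $F\laurent{x}$ are exactly as you describe, and the Springer decomposition is made workable by the rigidity of the classes $ax$, i.e.\ $V_{S[x]}(ax)=\{1,-ax\}$.

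The soft spot is \ref{QS4}. The claim that the second-component condition is ``$\F_2$-linear in $\epsilon$, so a consistent choice always exists'' is not a valid deduction (a linear equation over $\F_2$ such as $0\cdot\epsilon=1$ has no solution), and the slot is in general \emph{not} obtained by first taking the common slot $y$ from \ref{QS4} for $S$ and then adjusting $\epsilon$: writing $u$ for the common second component, for two quaternions $q_x(ax,bx^\beta)=q_x(cx,dx^\delta)$ with $a\neq c$ the second-component equations $(-1)^{\epsilon}a^{\epsilon}y=u=(-1)^{\epsilon}c^{\epsilon}y$ force $\epsilon=0$ and $y=u=(-a)^{\beta}b$, so the slot is pinned down before \ref{QS4} for $S$ can be invoked, and one must instead check directly via \ref{QS1}, \ref{QS3} and \Cref{lem:basic_properties_qs} that $q(a,u)=q(a,b)$ and $q(c,u)=q(c,d)$ hold automatically. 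What actually closes the case analysis is the hypothesis that the two given quaternions have \emph{equal} second components: this forces dichotomies such as ``$u=1$ or $u=a=c$'' (both $x$-exponents of the left slots zero) and ``$u=1$ or $u=c$'' (mixed case), and in each branch exactly one value of $\epsilon$ is admissible, with the slot drawn from $y$, $yx$, $u$, $1$, or $(-au)x$; in particular your candidate list misses the cases where the slot must be $b$ or $bx$. None of this breaks the approach --- the case analysis does go through --- but as written the existence of the common slot is asserted rather than proved.
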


The explicit description of $q$ above was pointed out to the second author by Sven Wagner (unpublished) while the corresponding abstract Witt ring and value groups are presented in \cite[5.16]{Marshall_AbstractWittRings} and \cite[3.2]{Marshall_ETC}.
We omit the proof as it can be done via a straightforward computation using the fact that quaternionic structures can be described uniquely in terms of either their value groups or their abstract Witt rings.
These equivalent descriptions are dealt with in \cite{KSS_Equivalences}.

\begin{proposition}\cite[5.7]{Marshall_AbstractWittRings}\label{prop:direct_product}
    Let $S_1=(G_1,e_1,q_1)$ and $S_2=(G_2,e_2,q_2)$ be quaternionic structures with sets of quaternions $Q_1,Q_2$.
    The \emph{direct product} $S_1\times S_2:=(G_1\times G_2, (e_1,e_2), q_1\times q_2)$ with quaternions $Q_1\times Q_2$
    \[(q_1\times q_2)((a_1,a_2),(b_1,b_2)):=(q_1(a_1,b_1), q_2(a_2,b_2)) \]
    and zero-quaternion $(0,0)$ is a quaternionic structure with abstract Witt ring 
    \[W(S_1\times S_2)=(W(S_1)\times W(S_2))_\text{par}:=\{(\varphi,\psi)\mid \dim(\varphi)\equiv_2\dim(\psi)\}.\]
    We usually write $G_1\times G_2$ as an inner direct product.
    Its value groups are $V_(S_1\times S_2)(a_1a_2)=V_{S_1}(a_1)\times V_{S_2}(a_2)$ for $a_1\in G_1$, $a_2\in G_2$.
\end{proposition}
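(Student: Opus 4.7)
The plan is to handle the three assertions in order: the quaternionic structure axioms for $(G_1\times G_2, (e_1,e_2), q_1\times q_2)$, the description of the value groups, and the identification of the Witt ring. The first two parts are essentially coordinate-wise routine. For \ref{QS1} through \ref{QS3}, each axiom reduces to the corresponding axiom in each factor by comparing components in $Q_1\times Q_2$, using that the zero quaternion is $(0,0)$ and the distinguished element is $(e_1,e_2)$. For \ref{QS4}, given an equation $(q_1\times q_2)((a_1,a_2),(b_1,b_2)) = (q_1\times q_2)((c_1,c_2),(d_1,d_2))$, I would apply \ref{QS4} in each factor separately to produce common slots $x_i \in G_i$, and then take $x := (x_1, x_2)$ as the common slot in $S_1\times S_2$. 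The value group formula then drops out from the definition, since $(q_1\times q_2)((a_1,a_2),(b_1,b_2))$ vanishes iff each of the two coordinates does.

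The heart of the proposition is the description of the Witt ring. I would define the natural map
\[\Phi\colon W(S_1\times S_2)\longrightarrow W(S_1)\times W(S_2),\quad \qf{(a_i^{(1)},a_i^{(2)})}_{i=1}^{n}\longmapsto \bigl(\qf{a_i^{(1)}}_{i=1}^{n},\, \qf{a_i^{(2)}}_{i=1}^{n}\bigr),\]
by projecting each slot onto its two coordinates. Compatibility with $\perp$ and $\otimes$ on representatives is transparent, and the hyperbolic plane $\qf{(1,1),(e_1,e_2)}$ is sent to $(\mathbb{H},\mathbb{H})$, so once $\Phi$ is shown to be well-defined on isometry classes, it is a ring homomorphism whose image sits inside the parity subring, both projections sharing the dimension of the original form. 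For surjectivity, given $(\varphi,\psi)$ with $\dim\varphi \equiv \dim\psi \pmod{2}$, I would pad the shorter form with hyperbolic planes until their dimensions agree and then zip the slots to obtain a preimage.

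Well-definedness and injectivity of $\Phi$ are where the work concentrates. I would establish both by induction on the common dimension $n$ using the chain-style definition of isometry from \Cref{rem:Witt_rings_of_qs}. The cases $n = 1$ and $n = 2$ reduce directly to coordinate-wise equalities of products in $G_1\times G_2$ and of quaternions in $Q_1\times Q_2$. For the inductive step, a common slot $(x_1, x_2) \in G_1\times G_2$ realising an isometry step in $S_1\times S_2$ automatically provides common slots $x_1\in G_1$ and $x_2\in G_2$ in the factors, so the hypothesis chains through on each projection. For injectivity, if both coordinate projections of a form $\chi$ are hyperbolic, one locates a pair of slots whose coordinate pairs take the form $(a,-a)$ in $S_1$ and $(b,-b)$ in $S_2$ simultaneously; using \ref{QS4} in each factor one rearranges $\chi$ to expose a hyperbolic plane in $S_1\times S_2$, applies Witt cancellation, and concludes by induction.

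The main obstacle I expect is precisely this last step: running two parallel chain-equivalence arguments while ensuring that every intermediate slot is chosen in $G_1\times G_2$ so that its two coordinate projections serve as witnesses simultaneously in $S_1$ and $S_2$. This is not conceptually difficult thanks to the coordinate-wise nature of \ref{QS4} in the direct product, but it demands careful bookkeeping so that the inductive data on the two factors stays synchronized, especially after paddings by hyperbolic planes are used to match dimensions.
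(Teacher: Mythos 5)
The paper gives no proof of this proposition at all — it is quoted verbatim from Marshall's book \cite[5.7]{Marshall_AbstractWittRings} — so there is no in-text argument to compare yours against; I can only judge the plan on its own merits. Most of it is sound and is the standard argument: the axioms \ref{QS1}--\ref{QS4} and the value-group formula really are purely coordinate-wise (for \ref{QS4} the common slot $(x_1,x_2)$ assembled from common slots in the two factors works, and the converse implication in \ref{QS4} is trivial), the slot-wise projection $\Phi$ is the right map, well-definedness on isometry classes follows by the chain induction you describe, and surjectivity onto the parity subring by padding with hyperbolic planes and zipping is correct.

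The one place where the sketch is not yet a proof is injectivity, and your description of that step — ``locate a pair of slots whose coordinate pairs take the form $(a,-a)$ in $S_1$ and $(b,-b)$ in $S_2$ simultaneously'' — is not quite how the argument can go: a form $\chi$ with both projections hyperbolic need not have two slots that pair off hyperbolically in both coordinates at once, and no amount of per-factor rearrangement via \ref{QS4} obviously produces such a pair. The clean route is to first prove, by induction on dimension using the recursion $D(\qf{a}\perp\psi)=\bigcup_{b\in D(\psi)}D(\qf{a,b})$, the product formula $D_{S_1\times S_2}(\varphi)=D_{S_1}(\varphi_1)\times D_{S_2}(\varphi_2)$, and then deduce that $\varphi$ is isotropic if and only if both projections are. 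The only delicate case in that deduction is the mixed one, where one projection is isotropic because its truncation already is while the other is isotropic because it represents the relevant element; there one uses $D(\mathbb{H}\perp\cdots)=G$ to reduce to the representation criterion in both coordinates simultaneously. Only after this does ``split off a hyperbolic plane, apply Witt cancellation, induct'' go through. I would make that product formula for represented elements an explicit lemma in your write-up; without it, the synchronization problem you yourself flag at the end is exactly where the argument stalls.
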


The following result of Kula shows that direct products 'behave well' considering fields.

\begin{theorem}\label{thm:Kula_direct_product}\cite[6.5]{Kula_FieldsQFSchemes}
    If $E,F$ are fields of characteristic not $2$, then $S_E\times S_F$ is realized by a field of characteristic $0$.
\end{theorem}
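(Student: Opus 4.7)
The plan is to construct a field $K$ of characteristic $0$ whose square class group is $G_E \times G_F$, with $-1 \in K^\ast/K^{\ast 2}$ corresponding to the pair $(-1_E, -1_F)$, and whose quaternion algebra map factors as the product of those on $E$ and $F$. Given such a $K$, the isomorphism $S_K \cong S_E \times S_F$ follows immediately from the defining data of the direct product given in \Cref{prop:direct_product}.

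I would pursue a \emph{compositum} strategy. First, realize $S_E$ and $S_F$ by fields $E_0$ and $F_0$ of characteristic $0$; when $E$ or $F$ itself has positive characteristic this already requires a nontrivial lifting step, typically handled by exhibiting a characteristic-$0$ field (for instance a suitable henselian or algebraic extension of $\Q$) with the same quaternionic structure. Then embed $E_0$ and $F_0$ into a sufficiently large algebraically closed field $\Omega$ of characteristic $0$, chosen so that $E_0$ and $F_0$ are algebraically disjoint over their common prime subfield, and set $K := E_0 \cdot F_0$.

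The bulk of the work is then the computation of the quaternionic structure of $K$, which I would split into two parts:
\begin{enumerate}[label=(\roman*)]
    \item Kummer theory combined with the algebraic disjointness of $E_0$ and $F_0$ should give $K^\ast/K^{\ast 2} \cong E_0^\ast/E_0^{\ast 2} \oplus F_0^\ast/F_0^{\ast 2}$, with $-1 \in K^\ast/K^{\ast 2}$ mapping to $(-1,-1)$.
    \item A Mayer--Vietoris analysis of the $2$-torsion of the Brauer group should yield $\mathrm{Br}_2(K) \cong \mathrm{Br}_2(E_0) \oplus \mathrm{Br}_2(F_0)$ (up to controlled contributions from the common subfield), and this decomposition should be compatible with the quaternion algebra map, so that $(a_E a_F, b_E b_F)_K$ decomposes as the tensor product of $(a_E, b_E)_{E_0}$ and $(a_F, b_F)_{F_0}$.
\end{enumerate}

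The main obstacle is ruling out accidental collapses in the Brauer group of the compositum: one must ensure no "mixed" identification forces a nontrivial relation between quaternion classes coming from $E_0$ and those coming from $F_0$. This is precisely the content of the parity condition $\dim \varphi \equiv_2 \dim \psi$ appearing in the Witt ring $W(S_E \times S_F)$, and what distinguishes this construction from a naive free product that would yield the unrestricted direct product of Witt rings. An alternative, possibly cleaner route is to realize $K$ as a number field whose completions at two distinguished places are prescribed (via weak approximation) to reproduce $S_E$ and $S_F$, with the parity condition then flowing from Hasse--Brauer--Noether reciprocity; I expect either route to require substantial additional input beyond what is developed in the excerpt.
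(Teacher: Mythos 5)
The paper offers no proof of this statement: it is imported verbatim from Kula's work, whose actual argument is a delicate valuation-/Galois-theoretic construction, not a compositum. Your proposal breaks down already at step (i). If $E_0$ and $F_0$ are algebraically disjoint subfields of $\Omega$ and $K=E_0\cdot F_0$, there is no mechanism forcing $K^\ast/K^{\ast 2}\cong E_0^\ast/E_0^{\ast 2}\oplus F_0^\ast/F_0^{\ast 2}$: the restriction maps $E_0^\ast/E_0^{\ast 2}\to K^\ast/K^{\ast 2}$ need not be injective (elements become squares in the compositum), and whenever $K$ is a transcendental extension of $E_0$ or $F_0$ it acquires infinitely many new square classes, so $K^\ast/K^{\ast 2}$ is typically infinite even when $G_E$ and $G_F$ are finite. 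Kummer theory classifies abelian extensions; it does not compute the square class group of a compositum. The simplest instance already refutes the strategy: $S_\R\times S_\R\cong\LL_1\Delta$ is realized by $\R\laurent{t}$ (\Cref{prop:group_extension}, \Cref{ex:ets_totdeg_and_small_group_ext}), whereas the compositum of two real closed subfields of an algebraically closed $\Omega$ is either again real closed (square class group of order $2$) or has square class group far from order $4$; in no case does one land on a field with the quaternionic structure of $\LL_1\times\LL_1$. Consequently step (ii), the Mayer--Vietoris decomposition of $\mathrm{Br}_2(K)$, has no foundation to rest on, and the parity condition in $W(S_E\times S_F)$ from \Cref{prop:direct_product} is not something a compositum produces.

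The missing idea is that the fiber product $W(E)\times_{\Z/2\Z}W(F)$ is \emph{not} the Witt ring of any field naturally containing both $E$ and $F$; Kula instead builds the realizing field from scratch, controlling its square class group and quaternion algebras by valuation-theoretic means (in the spirit of $2$-henselian valuations and intersections of carefully chosen subfields of a fixed quadratic closure, where the relevant Galois group becomes a free pro-$2$ product). Your alternative route via a number field with two prescribed completions faces the same obstruction: a global field has infinite square class group, so one must pass to a highly non-obvious infinite algebraic extension to cut it down to $G_E\times G_F$, and that reduction is precisely the content of the theorem rather than an application of weak approximation. As it stands, the proposal does not contain a workable path to the statement.
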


\begin{example}\label{ex:ets_totdeg_and_small_group_ext}
\begin{itemize}
\item The totally degenerate quaternionic structures of order $n\in \N$ (cf. \Cref{small quaternionic structures}) are exactly $\LL_{1,0}^n$ (the $n$-fold direct product) and $\LL_{1,1}^n$.
This follows immediately from the fact that for $\epsilon\in \{0,1\}$, $\LL_{1,\epsilon}^n$ has $1^n=1$ quaternions by \Cref{prop:direct_product}.
\item We have $\LL_0\Delta\cong \LL_{1,0}$ which also follows from \Cref{small quaternionic structures} as there is only one quaternionic structure of order $2$ and $-1 = 1$.
\item We have $\LL_{1}\times \LL_{1}\cong \LL_{1}\Delta$. 
This is in fact the only \emph{decomposable} (meaning that it can be written as the direct product of two structures of order at least 1) group extension (cf. \cite[p. 8]{Marshall_ETC}).
\end{itemize}
\end{example}

Marshall introduced the following \emph{normalized decomposition}, uniquely decomposing quaternionic structures into indecomposable factors $S$, meaning they can only written as the trivial direct product $S\cong \LL_0\times S$ (\cite[5.12]{Marshall_AbstractWittRings} and \cite[4.1]{Marshall_ETC}).

\begin{proposition}\label{normalized decomposition}
Let $S$ be a finite quaternionic structure. 
Choose $\epsilon=0$ if $-1=1$ in $S$ and $\epsilon=1$ else. 
There exist indecomposable nondegenerate quaternionic structures $S_1,\ldots,S_k$ such that 
\[S\cong \LL_{1,\epsilon}^m\times S_1\times\ldots\times S_k\]
where $m=\dim(\rad(S))$. The $S_i$ are unique up to permutation.
\end{proposition}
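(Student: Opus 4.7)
The plan is to establish existence by first splitting off the radical as a totally degenerate factor and then inductively decomposing the nondegenerate part; uniqueness of the indecomposable factors is the main obstacle.

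\textbf{Splitting off the radical.} Set $R := \rad(S)$ and choose an $\F_2$-vector space complement $H$ of $R$ in $G$. I will arrange the choice so that in the unique decomposition $-1_S = r_0 h_0$ with $r_0 \in R$, $h_0 \in H$, the element $r_0$ is nontrivial whenever $-1_S \neq 1$: if $-1_S \in R$ this is automatic, and if $-1_S \notin R$ (so $m := \dim R \geq 1$) one lifts $\overline{-1_S} \in G/R$ to an element of $(-1_S)R \setminus \{-1_S\}$, forcing $-1_S \notin H$. Set $T := (R, r_0, 0)$ and $S' := (H, h_0, q_S|_{H \times H})$, and consider the multiplication map $\Phi \colon T \times S' \to S$, $(r, h) \mapsto rh$. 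The axioms \ref{QS1}--\ref{QS4} for $T$ are trivial, while for $S'$ they are inherited from $S$ via the key identity
\[q_S(rh, r'h') = q_S(h, h') \qquad \text{for all } r, r' \in R,\ h, h' \in H,\]
which follows from two applications of \ref{QS3} together with $q_S(R, \cdot) = \{0\}$. In particular, $q_S(a, h_0 a) = q_S(a, r_0 h_0 a) = q_S(a, -_S a) = 0$ verifies \ref{QS1} in $S'$ with distinguished element $h_0$, and the same absorption shows $\Phi$ preserves the $q = 0$ relation. Nondegeneracy of $S'$ is automatic: any $a \in H$ with $q_S(a, H) = 0$ is orthogonal to $R$ as well, hence $a \in R \cap H = \{1\}$.

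\textbf{Identifying the factors.} Totally degenerate structures on a boolean group of dimension $m$ are classified up to isomorphism by their distinguished element (\Cref{small quaternionic structures}), so $T \cong \LL_{1,\epsilon}^m$ with the $\epsilon$ prescribed by $-1_S$. The nondegenerate factor $S'$ is then decomposed into indecomposables by induction on $\dim(H)$: if $S' \cong A \times B$ nontrivially, the relation $\rad(A \times B) = \rad(A) \times \rad(B)$ forces both $A$ and $B$ to be nondegenerate of strictly smaller order, and the induction hypothesis applies.

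\textbf{Uniqueness.} The totally degenerate factor is canonical because $m = \dim(\rad(S))$ and $\epsilon$ are invariants of $S$. The principal obstacle is to show the multiset $\{S_1, \ldots, S_k\}$ is independent of the chosen decomposition. I would pursue a Krull--Schmidt-style argument, either by exhibiting each indecomposable factor as a canonical sub-quaternionic structure of $S$, for instance the subgroup spanned by an equivalence class of the transitive closure of $\{(a,b) \in G^2 : q_S(a,b) \neq 0\}$ (with $\rad(S)$ and $-1_S$ distributed appropriately), or by establishing a cancellation principle $A \times C \cong B \times C \Rightarrow A \cong B$ for nondegenerate structures. In either route the delicate step is verifying that the chosen invariant matches the desired decomposition exactly --- neither merging elements across distinct factors nor overlooking finer splittings --- and this is where I expect the real work to lie.
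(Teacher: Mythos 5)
The paper does not prove this proposition at all: it is quoted from Marshall (\cite[5.12]{Marshall_AbstractWittRings}, \cite[4.1]{Marshall_ETC}), so the comparison can only be against the standard argument. Your existence half is essentially correct and complete: the absorption identity $q(rh,r'h')=q(h,h')$ for $r,r'\in\rad(S)$ does follow from \ref{QS2} and \ref{QS3}, the careful choice of complement so that the radical factor carries the right distinguished element (hence is $\LL_{1,\epsilon}^m$ by \Cref{small quaternionic structures}) is exactly the point that needs care, and the induction on the nondegenerate part using $\rad(A\times B)=\rad(A)\times\rad(B)$ (from \Cref{prop:direct_product}) is fine. The only small omission there is that \ref{QS4} for $S'$ requires projecting the common slot $x=rh$ to its $H$-component, which your identity handles.

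The uniqueness half, however, is a genuine gap, and it is the substantive content of the proposition. You name two candidate strategies and explicitly defer the "real work", so nothing is actually proved. Worse, the first strategy as stated fails: for $S=\LL_1\times\LL_1$ with group $\{1,a\}\times\{1,b\}$ one has $q((a,b),(a,1))\neq 0$ and $q((a,b),(1,b))\neq 0$, so the transitive closure of non-orthogonality connects $(a,1)$ to $(1,b)$ and the "canonical subgroup" it spans is all of $G$ --- yet $S$ is decomposable (indeed $\LL_1\times\LL_1\cong\LL_1\Delta$, cf. \Cref{ex:ets_totdeg_and_small_group_ext}). This is precisely the "merging elements across distinct factors" failure you anticipate, and it shows the invariant must be chosen more delicately (Marshall's argument works with the value groups $V(a)$ rather than the raw non-orthogonality graph). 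The cancellation route is likewise not obviously easier than Krull--Schmidt itself. Since uniqueness is what makes the counting arguments of \Cref{counting ets} work, the proposal cannot be accepted as a proof of the proposition.
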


\begin{definition}
Let $S$ be a finite quaternionic structure. 
We call $S$ of \emph{elementary type} if $S$ can be obtained from $\LL_0, \LL_{1,0}, \LL_{1,1}, \LL_1$ and local type structures applying only direct products and group extensions.
\end{definition}

The lack of counterexamples led to the following conjecture:

\begin{conjecture}[Elementary Type Conjecture (ETC)] \label{elementary type conjecture}
    Every finite quaternionic structure is of elementary type.
\end{conjecture}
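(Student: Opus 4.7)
The Elementary Type Conjecture is one of the central open problems in abstract quadratic form theory, so any plausible proof plan must concede that a full solution seems out of reach; the approach I would take, consistent with the preceding setup, is to verify the conjecture \emph{order by order} via a reduction to a finite combinatorial search. Concretely, by the normalized decomposition (\Cref{normalized decomposition}), for each fixed $n$ it suffices to enumerate all indecomposable nondegenerate quaternionic structures of order $n$ and show that each one either is a local type structure $\LL_n, \LL_{n,0}, \LL_{n,1}$, or arises as an indecomposable group extension $T\Delta$ where $T$ has order $n-1$ and is already known to be of elementary type by induction on $n$. The cases $n \le 2$ are handled by \Cref{small quaternionic structures}, so the induction begins.

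The key device is the matrix encoding promised in \Cref{sec_QM,sec_NQM}: given a basis $a_1,\dots,a_n$ of the boolean group $G$, one records the data of $q$ in an $n\times n$ matrix whose $(i,j)$-entry lies in the (finite) quaternion set, and the axioms \ref{QS1}--\ref{QS4} translate into symmetry plus a collection of equalities and independence conditions that can be tested on a computer. My plan is to (i) verify that the normal form from \Cref{sec_NQM} singles out a unique representative in each isomorphism class, so that no two distinct matrices encode the same structure; (ii) enumerate all matrices satisfying the axioms for $n \le 7$, pruning aggressively by checking \ref{QS1}--\ref{QS3} on partial entries before committing to \ref{QS4}; and (iii) match each surviving matrix against the elementary type catalogue built inductively from $\LL_0,\LL_{1,0},\LL_{1,1},\LL_1$ together with the local type structures via $\times$ and $\Delta$, using the explicit formulas for the value groups in \Cref{prop:group_extension,prop:direct_product} to compute the candidate matrix of any elementary type structure on the nose.

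Two reductions should drastically shrink the search space. First, the radical of $S$ splits off canonically as a factor $\LL_{1,\epsilon}^m$, so one only enumerates nondegenerate matrices. Second, at each order a candidate $S$ that admits a free variable $x \in G$ with $V_S(x) = \{1,-x\}$ must, by \Cref{prop:group_extension}, be a group extension $T\Delta$ with $T$ of order $n-1$, reducing the bulk of the enumeration to those $S$ whose value-group structure forbids this; among the remainder, the local type signature (two quaternions, prescribed level / Pythagoras number / $u$-invariant from \Cref{rem:local_types}) is the only elementary pattern available, so every survivor must be accounted for by it.

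The main obstacle I anticipate is combinatorial explosion in step (ii): without the normality conditions the number of raw $n\times n$ matrices over the quaternion set vastly exceeds what is feasible even for $n=6,7$. The theoretical content of the argument is therefore loaded into the abstract $2$-torsion Brauer group of \Cref{sec_2Brauer} and the Arason-Pfister property $AP_3$, which together force enough linear relations among quaternions to bound the size of the quaternion set in terms of $n$ and to prune the enumeration to a tractable size. A secondary risk is the final identification step: if some enumerated normal matrix matches \emph{none} of the elementary candidates, I would need to recheck the axioms and, failing that, investigate invariants (level, Pythagoras number, $u$-invariant, image in the abstract $2$-Brauer group) to either find a hidden elementary decomposition or, in principle, exhibit a counterexample.
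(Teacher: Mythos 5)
You correctly recognize that the statement is an open conjecture which the paper does not (and cannot) prove in full; what the paper actually establishes is \Cref{ETC}, the verification for orders up to $7$, and your plan is essentially the paper's strategy for that partial result: reduce to nondegenerate structures via the normalized decomposition, encode structures as normal quaternionic matrices, enumerate with aggressive pruning, and use the abstract $2$-Brauer group to control the quaternion set. Two points where your plan diverges from what the paper actually does are worth flagging. First, your step (iii) proposes to match each surviving matrix explicitly against the elementary type catalogue by computing the normal quaternionic matrix of every elementary type structure ``on the nose''; the paper deliberately avoids this (computing $M(S\times S')$ is delicate, cf.\ \Cref{rem_nqm of elementary types}) and instead closes the argument by a counting comparison: \Cref{counting ets} and \Cref{tbl_e_n} give the exact number of elementary type structures with the relevant properties as a lower bound, and the enumeration shows the number of normal quaternionic matrices does not exceed it. This is logically cleaner and computationally much cheaper than your matching step. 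Second, your reduction ``if some $x$ has $V_S(x)=\{1,-x\}$ then $S$ is a group extension'' does not follow from \Cref{prop:group_extension}, which only gives the forward direction; the paper justifies this reduction by citing Bos's theorem (\Cref{results on the etc}\ref{ETC_iteration}\ref{ETC_iteration_Rigid}), together with further $2$-rigidity results of Carson--Marshall and Cordes that you do not mention but which the paper needs to make the order-$7$ computation feasible. Neither issue is fatal to the plan, but as written the identification step would be substantially harder to carry out than the paper's counting argument, and the rigidity reduction rests on a nontrivial cited theorem rather than on the construction itself.
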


It follows from \Cref{rem:local_types}, \Cref{prop:direct_product}, \Cref{thm:Kula_direct_product} along with the earlier \Cref{small quaternionic structures} that all elementary type structures are realized by fields.
In terms of the corresponding Witt rings, the Elementary Type Conjecture states that finitely generated Witt rings can be constructed from $W(\C) \cong \Z/2\Z, W(\F_3) \cong \Z/4\Z, W(\R) \cong \Z$ and Witt rings $W(F)$ of finite extensions $F$ of $\Q_2$ using group ring extensions with a group of order 2 and fiber products of the canonical homomorphisms $W(F) \to W(F) / I(F) \cong \Z/2\Z$ (which we explicitly described in \Cref{prop:direct_product}).

Since it is not known whether all quaternionic structures are in fact realized by fields, we have the following possibly weaker version:

\begin{conjecture}[weak Elementary Type Conjecture (wETC)] \label{conj_weakETC}
    Every finite quaternionic structure that is realized by a field is of elementary type.
\end{conjecture}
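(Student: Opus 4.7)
The plan is to attack the weak Elementary Type Conjecture (and in fact the full ETC) for quaternionic structures of small order $n$ by an exhaustive computer-assisted enumeration rather than by a purely structural argument. The core idea is to encode each nondegenerate quaternionic structure $S$ of order $n$ as a canonical combinatorial object---a normal quaternionic matrix---whose shape is both tractable to enumerate and directly comparable against the matrices produced from the elementary building blocks $\LL_0, \LL_{1,0}, \LL_{1,1}, \LL_1$ and the local type structures via the group extension of \Cref{prop:group_extension} and the direct product of \Cref{prop:direct_product}.

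First, one reduces the problem. By the normalized decomposition in \Cref{normalized decomposition}, it suffices to classify nondegenerate indecomposable structures, since the radical and the direct product factors are then recovered trivially. Fixing a basis $g_1, \dots, g_n$ of $G$, the quaternion map $q$ is determined by its values on basis pairs: the diagonal values $q(g_i, g_i) = q(g_i, -1)$ are pinned down by \Cref{lem:basic_properties_qs}\ref{lem:basic_properties_qs_2}, while \ref{QS3} forces propagation of the off-diagonal values. These basis values live in the abstract $2$-torsion Brauer group constructed in \Cref{sec_2Brauer}, and recording them yields a symmetric $(n \times n)$-matrix. After imposing a canonical choice of basis under the action of $\mathrm{GL}_n(\F_2)$, to be developed in \Cref{sec_NQM}, one obtains a unique matrix representative per isomorphism class.

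Second, one enumerates. A backtracking search extends partial normal matrices row by row, checking each extension against the quaternionic axioms---in particular the common slot property \ref{QS4} and the Arason-Pfister property \eqref{eq_APk} for $k = 3$ established in \Cref{sec_2Brauer}---to cut branches aggressively. In parallel, the list of elementary type structures of order $n$ is generated recursively by applying the two allowed constructions to the building blocks and then converted into the same matrix format. Verifying the wETC at order $n$ then reduces to checking that every matrix surviving the search already appears in the elementary type list.

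The main obstacle will be the explosive growth of the candidate space: the number of symmetric $n \times n$ matrices over the abstract $2$-Brauer group grows super-exponentially in $n$, so brute-force enumeration quickly becomes infeasible unless two things hold. The normal form must be genuinely canonical, so that each isomorphism class is visited exactly once rather than once per basis change; and the axioms \ref{QS4} and \eqref{eq_APk} must be testable on partial matrices so that most branches can be killed before completion. The abstract $AP_3$ result is therefore both a theorem of independent interest and the key computational lever, since in its absence far more matrices would survive the search only to be discarded afterwards. Once these ingredients are in place, the verification for each $n \le 7$ becomes a finite, algorithmic check, to be carried out in \Cref{sec_Computation}.
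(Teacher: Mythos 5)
The statement you are asked to prove is not a theorem of the paper but an open conjecture: the wETC quantifies over \emph{all} finite quaternionic structures realized by fields, with no bound on the order $n$. The paper itself offers no proof of it --- it only verifies the (stronger) ETC for orders $n \le 7$ in \Cref{ETC}. Your proposal is a plan for a per-order finite enumeration, and no matter how efficiently each order is handled, such a computation can only ever confirm the conjecture for finitely many values of $n$. There is no inductive or structural step in your outline that would let the verification at order $n$ imply anything at order $n+1$; indeed \Cref{results on the etc}\ref{ETC_iteration} shows the reductions available all presuppose the conjecture at lower orders and still leave genuinely new cases at each order. So the proposal cannot close the statement as written; at best it reproduces the paper's \Cref{ETC}.

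Two further points on the enumeration strategy itself, measured against what the paper actually does. First, you require that the normal form be ``genuinely canonical, so that each isomorphism class is visited exactly once''; the paper explicitly does \emph{not} achieve this a priori. It checks only a selected subset of transformations during the search and concludes minimality a posteriori, by comparing the size of the output list against the lower bound $e(n)$ from \Cref{counting ets} --- the count of elementary type structures. This counting argument is an essential ingredient your outline omits: without it, a surviving non-elementary matrix and a redundant copy of an elementary one would be indistinguishable. Second, the paper's branch pruning does not come from testing \eqref{eq_APk} on partial matrices; $AP_3$ is used to justify the construction of the abstract $2$-Brauer group (and hence the well-definedness of quaternionic matrices via \Cref{lem:pqm_dimension_Brauer}), while the actual cuts come from the common slot property \ref{M3}, the rigidity and level exclusions of \Cref{excluded parameters}, the constrained shape of the first two rows (\Cref{first row nqm}, \Cref{second row nqm}), and the factor-exclusion rules of \Cref{exclude factors}.
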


As already pointed out in the introduction, the ETC gave rise to more general conjectures.
By \cite[Theorem 3.5]{JacobWareETCGalois}, the quaternionic structure associated to a field $F$ is of elementary type if and only if the Galois group $\mathsf{Gal}(F_2 / F)$ of its quadratic closure $F_2$ over $F$ can be constructed in finitely many steps from the Galois groups $\mathsf{Gal}(K_2/ K)$, where $K$ is either $\C, \R$, a finite field of characteristic not 2 or a finite extension of $\Q_2$, using certain semi-direct products with the 2-adic integers $\Z_2$ and free pro-2 products.
The wETC thus translates directly to a conjecture on the structure of the Galois groups $\mathsf{Gal}(F_2 / F)$ for fields with finite square class group $F^\ast / F^{\ast2}$.
Even more general, this may be further conjectured not only for the prime number 2, but also for the group $\mathsf{Gal}(F_p / F)$ for fields $F$ with $F^\ast / F^{\ast p}$ finite containing a $p$-th root of unity.
More details on these general elementary type conjectures can be found in Ido Efrats recent survey \cite{efrat2025elementarytypeconjecturemaximal}.

Both versions of the ETC, \Cref{elementary type conjecture} and \Cref{conj_weakETC}, have so far only been proven for special cases. 
Some of them are stated at the end of this section. 
However, we first give an overview of elementary type structures of small order, also considering some important invariants.

\begin{example}\label{list of small ets with invariants}
With Marshall's normalized decomposition, it is possible to iteratively compute complete lists of the elementary types of a certain order. 
We give those for order up to 2, including the level $s$, Pythagoras number $p$, and $u$-invariant $u$ in \Cref{tbl_local_structures}.
As they will be important later on, we also include the $d_1:=\dim(V(-1))$ and the minimum $d_{\Min}$ over all $\dim(V(a))$ for $a\in G$. 
We also include the general values for direct products, group extensions, and local structures so that all the given invariants can be computed for all elementary types with this list. 
Verifying these values can be done in a straightforward manner applying the results mentioned in \Cref{rem:local_types}, \Cref{prop:group_extension}, \Cref{prop:direct_product}.
The invariants for direct products and group extensions were also computed explicitly in \cite[5.1]{Kula_FieldsQFSchemes}.
Note that we assume $S$ and $S'$ to be of elementary type in \Cref{tbl_local_structures} which means that the depicted value for the $u$-invariant of $S\times S'$ is correct as the other case of \cite[5.1(d)]{Kula_FieldsQFSchemes} does not occur for elementary types.

Similar to \Cref{small quaternionic structures}, it can be seen that the list contains all quaternionic structures of order up to $2$.
In the following \Cref{tbl_local_structures}, $k$ can be any integer greater than 1.
\begin{center}
\begin{table}[ht!]
\footnotesize
\begin{tabular}{l|p{0.09\textwidth}|p{0.11\textwidth}|p{0.13\textwidth}|p{0.11\textwidth}|p{0.11\textwidth}|p{0.11\textwidth}}
structure & order & $s$ & $p$ & $u$ & $ d_1$ & $d_{\Min}$\\ \hline
$\LL_0$ & 0& 1& 1& 1& 0&0\\
$\LL_{1,0}$ & 1& 1&2&2&1&1\\
$\LL_{1,1}$ & 1& 2&2&2&1&1\\
$\LL_1$& 1&$\infty$&1&0&0&0\\
$\LL_{1,0}^2$& 2&1&2&2&2&2\\
$\LL_{1,1}^2$& 2&2&2&2&2&2\\
$\LL_{1,1}\times \LL_1$&2&$\infty$&2&2&1&2\\
$\LL_1^2\cong \LL_1\Delta$&2&$\infty$&1&0&0&0\\
$\LL_{1,0}\Delta$&2&1&2&4&2&1\\
$\LL_{1,1}\Delta$&2&2&3&4&1&1\\
$\LL_{2k-1}$&$2k-1$&4&4&4&$2k-2$&$2k-2$\\
$\LL_{2k,0}$&$2k$&1&2&4&$2k$&$2k-1$\\
$\LL_{2k,1}$&$2k$&2&3&4&$2k-1$&$2k-1$\\
$S$&$n$&$s$&$p$&$u$&$d_1$&$d_{\Min}$\\
$S'$&$n'$&$s'$&$p'$&$u'$&$d_1'$&$d_{\Min}'$\\
$S\Delta$&$n+1$&$s$&$\begin{cases} p \text{ if } s=\infty\\ s+1 \text{ else}\end{cases}$&$2u$&$d_1$&1\\
$S\times S'$&$n+n'$&$\Max\{s,s'\}$&$\Max\{p,p'\}$&$\Max\{u,u'\}$&$d_1+d_1'$&$d_{\Min}\!+\!d_{\Min}'$\\
\end{tabular}
\caption{List of small and local structures} \label{tbl_local_structures}
    \end{table}
\end{center}
\end{example}

In view of the given values for the level in \Cref{tbl_local_structures}, it is no suprise that there have been many attempts to find a field with finitely many square classes and finite level $\ge 8$, but these remain unsuccessful until know.
This gave rise to the following conjecture:

\begin{conjecture}[Level Conjecture (LC)] \label{level conjecture}
    All finite quaternionic structures (coming from fields) have level in $\{1,2,4,\infty\}$.
\end{conjecture}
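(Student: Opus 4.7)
The plan is to reduce the Level Conjecture to the Elementary Type Conjecture and read the answer off from the invariants tabulated in \Cref{tbl_local_structures}. In other words, I would not attempt to prove LC directly, but rather show that it is a corollary of (a version of) ETC, and then invoke the partial results of this paper.

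First I would check that every elementary type structure already satisfies LC. The four base structures $\LL_0,\LL_{1,0},\LL_{1,1},\LL_1$ have level $1,1,2,\infty$ respectively, and the local type structures $\LL_{2k-1},\LL_{2k,0},\LL_{2k,1}$ have level $4,1,2$. According to the last rows of \Cref{tbl_local_structures}, the group extension preserves the level, $s(S\Delta)=s(S)$, while the direct product combines levels by taking the maximum, $s(S\times S')=\Max\{s(S),s(S')\}$. A straightforward induction on the construction tree of an elementary type structure then keeps the level inside $\{1,2,4,\infty\}$. Combined with \Cref{elementary type conjecture} this yields LC for all finite quaternionic structures, and combined with the weaker \Cref{conj_weakETC} it yields LC for structures realized by fields. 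In particular, the main theorem of this paper, verifying ETC up to order $7$, would immediately give LC unconditionally in that range.

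The main obstacle is that the above reduction only trades one open problem for another. A direct attack on LC, independent of ETC, would have to exclude structures with $s(S)=2^k$ for some $k\ge 3$. By the usual Pfister argument --- which goes through in the abstract setting because value groups of Pfister forms are groups, cf.~\Cref{rem:Pfister_forms_and_fundamental_ideal}\ref{rem:Pfister_forms_and_fundamental_ideal1} --- the level, when finite, is automatically a power of $2$, and a structure with $s(S)=2^k$ forces the Pfister form $\Pfister{-1,\ldots,-1}$ of dimension $2^k$ to be anisotropic while $\Pfister{-1,\ldots,-1}\perp \qf{1}$ is isotropic. Ruling this out for $k\ge 3$ would require strong Arason-Pfister type control in high degrees, and since even $AP_4$ is open for abstract Witt rings (see \Cref{rem:Pfister_forms_and_fundamental_ideal}\ref{rem:Pfister_forms_and_fundamental_ideal2}), this is precisely where the current abstract machinery falls short. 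I would therefore regard LC as genuinely inaccessible by elementary means at present, with any unconditional progress hinging on either a direct classification (as pursued in this article for small orders) or a substantial new input on higher powers of the fundamental ideal.
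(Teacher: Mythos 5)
This statement is a conjecture, and the paper offers no proof of it; it only remarks (immediately after \Cref{list of small ets with invariants}) that every elementary type structure has level in $\{1,2,4,\infty\}$, so that LC would follow from ETC, and cites Becher for the case of fields with at most $8$ square classes. Your proposal correctly recognizes this and reproduces essentially the same reduction the paper itself makes, so it is consistent with the paper's treatment --- though, as you acknowledge, it is not (and cannot be expected to be) an actual proof of the open conjecture.
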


The previous \Cref{list of small ets with invariants} shows that every elementary type structure has level in $\{1,2,4,\infty\}$, Pythagoras number in $\{1,2,3,4,5\}$ and $u$-invariant in $\{0\} \cup \{2^m \mid m \in \N_0\}$ and all these values can be obtained (though not independently, since for example the Pythagoras number cannot exceed the level by more than $1$). 
In particular, the LC would follow from the ETC.
The LC is known to hold for Witt rings (of fields) of order up to 8 by a result of Becher in 2001 \cite{Becher_NoSquareClasses}.


\begin{definition}
Let $S=(G,-1,q)$ be a quaternionic structure and $k\in \N_0$. 
We say that $a\in G$ is \emph{$k$-rigid} if its value group is $k$-dimensional, i.e. $\dim(V_S(a))=k$. 
If $G$ contains a $k$-rigid element, we say that $S$ is \emph{$k$-rigid}. 
If $S$ is $k$-rigid and furthermore either $k = 0$ or $G$ does not contain a $(k-1)$-rigid element, we say that $S$ is \emph{strictly $k$-rigid}.
We usually only say \emph{rigid} and \emph{strictly rigid} instead of $1$-rigid and strictly $1$-rigid.
\end{definition}

Rigidity turns out to be of high importance for counting quaternionic structures, so we briefly consider this for elementary type structures:

\begin{lemma}\label{lem:rigidity_ets}
    Let $S=(G,-1,q)$ be a quaternionic structure of order $n$.
    \begin{enumerate}[label=(\alph*)]
        \item\label{lem:rigidity_ets1} 
        $S$ is $0$-rigid if and only if $S$ is strictly $0$-rigid if and only if $S$ is pythagorean if and only if $V(-1)=\{1\}$.
        \item\label{lem:rigidity_ets2} 
        $S\Delta$ is $1$-rigid. It is $0$-rigid if and only if $S$ is pythagorean and $n>0$.
        \item\label{lem:rigidity_ets3} 
        If $S$ is of local type, then $S$ is strictly $(n-1)$-rigid.
        More precisely, all elements of $G\setminus\{1\}$ are $(n-1)$-rigid.
        \item\label{lem:rigidity_ets4} 
        If $S$ is strictly $k$-rigid and $S'=(G',-1',q')$ is a strictly $k'$-rigid quaternionic structure, then $S\times S'$ is strictly $(k+k')$-rigid.
    \end{enumerate}
\end{lemma}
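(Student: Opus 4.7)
The plan is to address (a)--(d) in turn; each part reduces to a direct application of the value-group formulas from \Cref{prop:group_extension}, \Cref{prop:direct_product}, and \Cref{rem:local_types}, combined with \ref{QS1}. For (a), I would first unpack ``$S$ pythagorean iff $V(-1)=\{1\}$'' purely from definitions: $V(-1)=D_S(\Pfister{-1})=D_S(\qf{1,1})$, $D_S(\qf{1})=\{1\}$, and $p(S)=1$ is precisely $D_S(\qf{1})=D_S(\qf{1,1})$. Next, $V(-1)=\{1\}$ trivially exhibits $-1$ as a $0$-rigid element. For the converse, if $a\in G$ has $V_S(a)=\{1\}$, then \ref{QS1} forces $-a\in V_S(a)=\{1\}$, so $a=-1$. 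Finally, strictly $0$-rigid coincides with $0$-rigid directly from the definition, as the second clause is vacuous at $k=0$.

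For (b), the explicit formula $V_{S\Delta}(ax)=\{1,-ax\}$ from \Cref{prop:group_extension} immediately gives $\dim V_{S\Delta}(ax)=1$ for every $a\in G$, so $S\Delta$ is $1$-rigid. Applying (a) to $S\Delta$, it is $0$-rigid iff $V_{S\Delta}(-1)=\{1\}$, and I would split on whether $-1=1$ in $S$. If $-1=1$ (which is forced when $n=0$), then $V_{S\Delta}(-1)=V_{S\Delta}(1)=G_x\ni x\neq 1$, so $V_{S\Delta}(-1)\neq\{1\}$. If $-1\neq 1$ in $S$, then $V_{S\Delta}(-1)=V_S(-1)$ by the formula, and this equals $\{1\}$ iff $S$ is pythagorean by (a). To match the stated form, I would observe that a pythagorean $S$ with $n>0$ automatically has $-1\neq 1$ (otherwise $V_S(-1)=V_S(1)=G\neq\{1\}$), so ``pythagorean and $-1\neq 1$'' collapses to ``pythagorean and $n>0$''.

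For (c), \Cref{rem:local_types} lets us view $q$ as a nondegenerate symmetric $\F_2$-bilinear form on $G\cong\F_2^n$, so $V_S(a)=\ker q(a,\cdot)$ has codimension $1$ for every $a\neq 1$; together with $\dim V_S(1)=n$, every nontrivial element is $(n-1)$-rigid and no element is $(n-2)$-rigid, yielding strict $(n-1)$-rigidity. For (d), the product formula $V_{S\times S'}(ab)=V_S(a)\times V_{S'}(b)$ from \Cref{prop:direct_product} makes dimensions add: choosing $a\in G$, $b\in G'$ realizing the minimal $V$-dimensions $k$ and $k'$ produces a $(k+k')$-rigid element in $S\times S'$, while strict $k$- and $k'$-rigidity pin the minimum $V$-dimensions on the factors, so $\dim V_{S\times S'}$ cannot drop below $k+k'$, precluding any $(k+k'-1)$-rigid element. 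The only step requiring real bookkeeping is the case split in (b) around whether $-1=1$ holds in $S$; the remaining assertions are essentially direct computations from the cited formulas.
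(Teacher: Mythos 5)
Your proposal is correct and follows essentially the same route as the paper: part (a) via the observation that $\{1,-a\}\subseteq V(a)$ forces any $0$-rigid element to be $-1$, part (b) via the value groups of $S\Delta$ from \Cref{prop:group_extension} with the same case split on whether $-1=1$, part (c) via the nondegenerate bilinear-form viewpoint of \Cref{rem:local_types} (the paper writes out the codimension-one count as an explicit bijection, but the content is identical), and part (d) via $V_{S\times S'}(aa')=V_S(a)\times V_{S'}(a')$. No gaps.
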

\begin{proof}
    First note that for all $a\in G$, we have $\{1,-a\}\subset V(a)$ by \ref{QS1} and \Cref{lem:basic_properties_qs}\ref{lem:basic_properties_qs_2}.
    Hence, \ref{lem:rigidity_ets1} follows from the fact that $\dim(V(a))\ge 1$ for all $a\in G\setminus\{-1\}$ and the definitions.
    
    \ref{lem:rigidity_ets2}: 
    Write $S\Delta=S[x]=(G\cup xG, -1,q_x)$ and recall that by definition $q_x(ax^\alpha,x)=0$ for $a\in G$, $\alpha\in \{0,1\}$ if and only if $(-1)^\alpha a=1$ if and only if $ax^\alpha\in \{1,-x\}$.
    Since $-x\neq 1$, this shows that $x$ is $1$-rigid.
    By \ref{lem:rigidity_ets1}, $S\Delta$ is $0$-rigid if and only if $V_{S[x]}(-1)=\{1\}$.
    For $a,\alpha$ as before, we have $q_x(-1,ax^\alpha)=0$ if and only if $q(-1,a)=0$ and $(-1)^\alpha=1$.
    If $-1\neq 1$, it follows that $V_{S[x]}(-1)=V_S(-1)$, so $S[x]$ is pythagorean if and only if $S$ is.
    If $-1=1$, we trivially have $V_S(-1)=G$ and $V_{S[x]}(-1)=G\cup xG$, so $S$ is only pythagorean if $n=0$ and $S[x]$ is not pythagorean.
    
    \ref{lem:rigidity_ets3}:
    Recall that local type structures are nondegenerate and have exactly $2$ quaternions by definition. 
    We write $Q=\{0,1\}$ for the quaternions of $S$.
    For $a\in G\setminus\{1\}$, choose $b\in G$ such that $q(a,b)=1$, making use of $S$ being nondegenerate.
    For all $c\in G$, we have $q(a,c)=0\iff q(a,bc)=1$ by \ref{QS3}, so it follows that we get a bijection from $\{c\in G\mid q(a,c)=1\}$ to $\{c\in G\mid q(a,c)=0\}=V_S(a)$.
    Also, we get $G=V_S(a)\cup bV_S(a)$, so $|V_S(a)|=2^{n-1}$ and the assertion follows.
    
    \ref{lem:rigidity_ets4}:
    This follows from \Cref{prop:direct_product} as $V_{S\times S'}(aa')=V_S(a)\times V_{S'}(a')$ for all $a\in G$, $a'\in G'$.
\end{proof}

\medskip

We state a few results on the ETC, where especially those regarding $k$-rigid elements for small $k$ will be important for reducing the length of the computations in \Cref{sec_Computation}.

\begin{proposition}\label{results on the etc}
Let $S$ be a finite quaternionic structure of order $n\in \N$.
\begin{enumerate}[label=(\alph*)]
\item\label{ETC_pythagorean} Marshall in \cite[6.23]{Marshall_AbstractWittRings}: If $S$ is pythagorean, then every indecomposable factor of $S$ is isomorphic to $\LL_1$ or a group extension of $\LL_1$. 
In particular, $S$ is of elementary type.
\item Kula in \cite[3.12]{Kula_FinitelyGeneratedWittRings}: Every quaternionic structure with at most 12 quaternions is of elementary type.
\item\label{ETC5} Carson and Marshall in \cite[§ 5]{CarsonMarshall_Decomposition}: If $n\le 5$, then $S$ is of elementary type.
\item\label{ETC_iteration}
Assuming that every quaternionic structure of order up to $n-1$ is of elementary type, $S$ is also of elementary type in the following cases:
\begin{enumerate}[label=(\roman*)]
    \item\label{ETC_iteration_Deg} $S$ is degenerate (Marshall, cf. normalized decomposition in \ref{normalized decomposition})
    \item\label{ETC_iteration_Rigid} $S$ is nondegenerate and rigid (Bos in \cite[2.10]{Bos_StructureWittRing})
    \item\label{ETC_iteration_s1_2rigid} $s(S)=1$ and $S$ is $2$-rigid (Carson and Marshall in \cite[3.10]{CarsonMarshall_Decomposition})
    \item\label{ETC_iteration_s2_2rigid} $s(S)=2$ and there is a $2$-rigid element in $D_S(\qf{1,1})$ (Cordes in \cite[3.1/4.1]{Cordes_BinaryRepr4})
    \item\label{ETC_iteration_s4_2rigid} $s(S)=4$ and $1$ is a $2$-rigid element (Cordes in \cite[2.6]{Cordes_BinaryRepr4}).
\end{enumerate}
\end{enumerate}
\end{proposition}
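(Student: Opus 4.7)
Since the proposition compiles results from several different authors, a proof proposal amounts to indicating, for each item, which smaller quaternionic structure one would peel off and how. The common strategy is induction on the order $n$, with the building constructions of direct product and group extension supplying the inductive step; the base cases $n\le 2$ are settled by \Cref{list of small ets with invariants}.

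The easy reductions are \ref{ETC_iteration_Deg} and \ref{ETC_iteration_Rigid}. Part \ref{ETC_iteration_Deg} is immediate from \Cref{normalized decomposition}: the factors $S_i$ appearing there all have order strictly smaller than $n$ whenever $\rad(S)\ne\{1\}$, so the induction hypothesis finishes the job. For \ref{ETC_iteration_Rigid}, given a $1$-rigid element $a$, one has $V(a)=\{1,-a\}$; the plan is to choose a complementary subgroup $G_0\le G$ with $G=G_0\times\{1,-a\}$, show that the restriction of $q$ turns $G_0$ into a quaternionic structure $T$ of order $n-1$, and identify $S\cong T\Delta$ via the recipe of \Cref{prop:group_extension} with $x\mapsto -a$. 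The same template handles \ref{ETC_pythagorean}: by \Cref{lem:rigidity_ets}\ref{lem:rigidity_ets1}, pythagoreanness forces $V(-1)=\{1\}$, so every $a\ne 1$ is rigid, and one iteratively peels off $\LL_1$-factors or group extensions thereof.

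The harder items are \ref{ETC_iteration_s1_2rigid}--\ref{ETC_iteration_s4_2rigid}, together with the order bound \ref{ETC5} and Kula's quaternion bound~(b). For a $2$-rigid element $a$, the $4$-element value group $V(a)$ is a candidate for a two-dimensional elementary-type factor (a local structure of order $2$ or $\LL_1\times \LL_1$) sitting inside $S$; the level hypothesis selects which candidate actually occurs, and the additional assumption on the position of $1$ (or of a representative of $D_S(\qf{1,1})$) with respect to $V(a)$ is exactly what is needed to split $S\cong T\times U$ with $U$ of order $2$ and $T$ of order $n-2$, to which induction applies. For \ref{ETC5}, one combines \ref{ETC_iteration_Deg} and \ref{ETC_iteration_Rigid} to reduce to the nondegenerate, non-rigid case; the residual $s(S)=\infty$ situation falls under the pythagorean regime and is absorbed by \ref{ETC_pythagorean}, while for finite $s(S)$ the restriction $n\le 5$ together with the bound $|V(a)|\ge 4$ for every $a\ne 1$ forces, via a pigeonhole argument on value-group sizes, the existence of a $2$-rigid element satisfying the hypothesis of one of \ref{ETC_iteration_s1_2rigid}--\ref{ETC_iteration_s4_2rigid}. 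Kula's bound (b) is of a different flavour: with at most twelve quaternions, the possible quaternion tables are few enough to be enumerated directly against the axioms \ref{QS1}--\ref{QS4} and matched against the elementary-type list.

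In every nontrivial case, the principal obstacle is the same: from structural data one must exhibit an explicit internal direct-product decomposition of $G$ and verify that axioms \ref{QS3} and \ref{QS4} descend to the factors. Producing the complementary subgroup and checking that the induced quaternion map on it genuinely defines a quaternionic structure is the technical core of each of the cited works, and it is precisely this point that dictates the fine case split by level in \ref{ETC_iteration_s1_2rigid}--\ref{ETC_iteration_s4_2rigid}.
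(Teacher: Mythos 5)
The paper offers no proof of this proposition at all: it is a catalogue of results quoted from the literature (Marshall, Kula, Carson--Marshall, Bos, Cordes), each with an explicit citation, and the authors rely on those sources rather than reproving anything. So your text is not an alternative route to the paper's argument but an attempted reconstruction, and it has to stand on its own. It does not, for two concrete reasons. First, pythagoreanness (equivalently $V(-1)=\{1\}$, by \Cref{lem:rigidity_ets}\ref{lem:rigidity_ets1}) does not make every $a\neq 1$ rigid: in $\LL_1^3$ the element $(-1,1,1)$ has value group $\{1\}\times\{1,-1\}\times\{1,-1\}$ of dimension $2$, so your plan for \ref{ETC_pythagorean} of iteratively peeling off $1$-rigid elements fails at the first step; Marshall's Satz 6.23 is proved by a different induction on the whole pythagorean structure. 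Second, $s(S)=\infty$ does not place a structure ``under the pythagorean regime'': $\LL_{1,1}\times\LL_1$ has level $\infty$ and Pythagoras number $2$ (see \Cref{tbl_local_structures}), so your reduction of the $n\le 5$ case \ref{ETC5} silently discards a whole family of non-pythagorean structures of infinite level.

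Beyond these errors, the items carrying the real content --- Carson and Marshall's classification for $n\le 5$, Cordes's theorems behind \ref{ETC_iteration_s2_2rigid} and \ref{ETC_iteration_s4_2rigid}, and Kula's bound of $12$ quaternions --- are not established by ``a pigeonhole argument on value-group sizes'' or by ``enumerating quaternion tables''; each is a substantial case analysis in the cited work, and for Kula's result note that $|Q|\le 12$ does not even bound the order $n$ (totally degenerate structures of any order have a single quaternion), so a direct enumeration is unavailable until one has first invoked the normalized decomposition. Your outlines for \ref{ETC_iteration_Deg} and \ref{ETC_iteration_Rigid} are essentially correct (normalized decomposition, respectively splitting off a group extension at a rigid element, which is indeed how Bos proceeds), but for a proposition of this kind the appropriate ``proof'' is the one the paper gives: the citations themselves.
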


The central idea behind our computational approach to verify the ETC in new cases, which will be explained in more detail in \Cref{sec_Computation}, is to find a strategy to (more or less) efficiently list all quaternionic structures of a given order $n$.
On the other hand, based on Marshall's normalized decomposition \Cref{normalized decomposition}, we can count the number of quaternionic structures of order $n$ for any $n$, see \cite[4.5]{CarsonMarshall_Decomposition}.
We can then verify the ETC if we can show that the length of the list does not exceed the number of elementary type quaternionic structures.
In fact, we can do better.
By \Cref{results on the etc}, we already know numerous situations in which the ETC holds, so we do not have to consider these cases.

Our aim for the remainder of this section is thus to derive a more refined version of the aforementioned result \cite[4.5]{CarsonMarshall_Decomposition} which also gives us the number of quaternionic structures of elementary type of a given order that fulfill certain additional properties that will be important for the computations.

\begin{notation}
    We write $\mathcal{E}_k(n)$ for the set of isomorphism types of strictly $k$-rigid  nondegenerate elementary type structures of order $n$ and 
    \[\overline{\mathcal{E}}_k(n) := \{S \in \mathcal{E}_k(n) \mid S \text{ is indecomposable}\}.\] 
    Furthermore, we set $e_k(n) := |\mathcal{E}_k(n)|$ and $\overline{e}_k(n) := |\overline{\mathcal{E}}_k(n)|$. 
    Lastly, we write $\mathcal{E}(n)\ /\ \mathcal{E}'(n)\ /\ \mathcal{E}_\text{d}(n)\ /\ \mathcal{E}_\Delta(n)$ for the sets of elementary types of order $n \ /\ $ elementary types of order $n$ with $1=-1$ \ /\ degenerate elementary types of order $n\ /\ $  group extensions of order $n$ and respectively $e(n), \ e'(n), e_\text{d}(n), e_\Delta(n)$ for their cardinalities. 
\end{notation}

\begin{definition}
    Let $k\in \N_0$ and $n\in \N$. 
    We define the set $P_{k,n}$ as 
    \[P_{k,n}:=\{(m_{i,j})_{0\le i\le k,\ 1\le j\le n}\in \Mat_{(k+1)\times n}(\N_0)\mid \sum_{i,j} im_{i,j}=k,\ \ \sum_{i,j}jm_{i,j}=n\}.\]
    Note that $P_{0,n}$ is in bijection to the set of partitions of $n$. 
\end{definition}

\begin{lemma}\label{counting ets}
    Let $k,n\in \N_0$. 
    The following hold:
    \begin{enumerate}[label=(\alph*)]
    \item\label{counting1} $e_\text{d}(n)=e(n-1)+e'(n-1)-e'(n-2)$ for $n\ge 2$
    \item\label{counting2} $e_\Delta(n)=e(n-1)$ for $n>2$.
    \item\label{counting3} We have $e_0(0)=e_0(1)=e_0(2)=1=\overline{e}_0(1)$, $\overline{e}_0(0)=\overline{e}_0(2)=0$ and $e_1(2)=\overline{e}_1(2)=2$. 
    For $k\ge n$, we always have $e_k(n)=0$.
    \item\label{counting4} If $n>2$ and $k\ge 2$, we have
	\begin{enumerate}[label=(\roman*)]
		\item\label{counting41} $\overline{e}_0(n)=e_0(n-1)$ and $e_0(n)=\sum_{m\in P_{0,n}}\prod_{j=1}^n \binom{\overline{e}_0(j)+m_{0,j}-1}{m_{0,j}}$
		\item\label{counting42} $\overline{e}_1(n)=e_\Delta(n)-\overline{e}_0(n)$ and $e_1(n)=\sum_{j=2}^n \overline{e}_1(j)e_0(n-j)$
		\item\label{counting43} $\overline{e}_k(n)=\begin{cases} 2,&\text{ if } k+1=n \text{ is even}\\ 1,&\text{ if } k+1=n \text{ is odd} \\ 0,&\text{ else.}\end{cases}$
		\item\label{counting44} $e_k(n)=\sum_{m\in P_{k,n}} \prod_{0\le i\le k,\ 1\le j \le n} \binom{\overline{e}_i(j)+m_{i,j}-1}{m_{i,j}}$. 
	\end{enumerate}
    \end{enumerate}
\end{lemma}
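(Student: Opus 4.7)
The plan is to exploit the normalized decomposition (\Cref{normalized decomposition}) together with the classification of indecomposable nondegenerate elementary type structures of order at least $2$: by \Cref{results on the etc}\ref{ETC_pythagorean} and the remark in \Cref{ex:ets_totdeg_and_small_group_ext} identifying $\LL_1\Delta$ as the only decomposable group extension, each such structure is either a local type structure or a group extension $T\Delta$ with $T\not\cong\LL_1$. Each of the four parts then reduces to a combinatorial counting argument resting on these ingredients.

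For \ref{counting1} I will define the peeling map $\Phi:\mathcal{E}_\text{d}(n)\to\mathcal{E}(n-1)$ sending the normalized form $\LL_{1,\epsilon}^\mu\times N$ with $\mu\ge 1$ to $\LL_{1,\epsilon}^{\mu-1}\times N$, and compute its fibre sizes. The central technical ingredient is the isomorphism $\LL_{1,0}\times T\cong\LL_{1,1}\times T$ whenever $-1\neq 1$ in $T$; one proves this by picking an $\F_2$-linear form $\delta:G_T\to\F_2$ with $\delta(-1_T)=1$ and using the group isomorphism $(a^i,t)\mapsto(c^{i+\delta(t)},t)$, which is an isomorphism of quaternionic structures because the quaternion map on either product depends only on the $T$-coordinates. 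A short case analysis then gives $|\Phi^{-1}(T)|=2$ precisely when $T$ is nondegenerate with $-1=1$ and $|\Phi^{-1}(T)|=1$ otherwise. Summing over $T\in\mathcal{E}(n-1)$ and using the identity that the number of nondegenerate $T\in\mathcal{E}'(n-1)$ equals $e'(n-1)-e'(n-2)$, obtained from one step of normalized decomposition inside $\mathcal{E}'(n-1)$, yields the formula.

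For \ref{counting2} the map $T\mapsto T\Delta$ is clearly surjective onto $\mathcal{E}_\Delta(n)$; injectivity for $n>2$ follows from the Witt ring identification $W(T\Delta)\cong W(T)[C_2]$ of \Cref{prop:group_extension}, since $W(T)$ is recovered from $W(T)[C_2]$ via the augmentation homomorphism and a quaternionic structure is determined by its Witt ring together with the distinguished $-1$. The hypothesis $n>2$ excludes the accidental order-$2$ coincidence $\LL_1\times\LL_1\cong\LL_1\Delta$. Part \ref{counting3} is then a direct inspection using \Cref{small quaternionic structures} and the invariants of \Cref{tbl_local_structures}; in particular $e_k(n)=0$ for $k\ge n\ge 1$ follows because in a nondegenerate $S$ of order $n$ one has $\dim V(a)\le n-1$ for every $a\neq 1$, so $d_{\Min}(S)\le n-1<k$.

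For \ref{counting4} I combine the indecomposable classification with \Cref{lem:rigidity_ets}. By \Cref{lem:rigidity_ets}\ref{lem:rigidity_ets3} a local type structure of order $n$ is strictly $(n-1)$-rigid, and the enumeration in \Cref{rem:local_types} yields the cardinalities in \ref{counting43}. By \Cref{lem:rigidity_ets}\ref{lem:rigidity_ets2} a group extension $T\Delta$ is always $1$-rigid and is strictly $0$-rigid precisely when $T$ is pythagorean, which together with the classification produces the bijections $\overline{\mathcal{E}}_0(n)\leftrightarrow\mathcal{E}_0(n-1)$ and $\overline{\mathcal{E}}_1(n)\leftrightarrow\mathcal{E}_\Delta(n)\setminus\overline{\mathcal{E}}_0(n)$ asserted in \ref{counting41} and \ref{counting42}. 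The aggregate formulas for $e_k(n)$ are then multiset enumerations: by \Cref{lem:rigidity_ets}\ref{lem:rigidity_ets4} strict rigidity and order are additive across indecomposable factors, so an element of $\mathcal{E}_k(n)$ is encoded by a tuple $(m_{i,j})\in P_{k,n}$ together with, for each $(i,j)$, an unordered selection of $m_{i,j}$ elements from $\overline{\mathcal{E}}_i(j)$, contributing the multiset coefficient $\binom{\overline{e}_i(j)+m_{i,j}-1}{m_{i,j}}$. The main obstacle across the proof is the injectivity in \ref{counting2}, since the exception $\LL_1^2\cong\LL_1\Delta$ shows that the base of a group extension is not canonically determined in general; once the classification is secured, the remainder is essentially bookkeeping.
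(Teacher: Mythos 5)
Your overall strategy — normalized decomposition, the classification of indecomposable nondegenerate elementary types of order $>2$ as local types and group extensions, the rigidity additivity of \Cref{lem:rigidity_ets}, and multiset enumeration over $P_{k,n}$ — is exactly the paper's, and parts \ref{counting1}, \ref{counting3} and \ref{counting4} are correct. Your treatment of \ref{counting1} via a peeling map with fibre sizes $1$ or $2$ is just a repackaging of the paper's direct case split on $\epsilon$, and the isomorphism $\LL_{1,0}\times T\cong\LL_{1,1}\times T$ for $-1\neq 1$ in $T$ (which the paper leaves implicit in the uniqueness clause of \Cref{normalized decomposition}) is correctly established by your explicit map.

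The one step that does not hold up as written is the injectivity argument in \ref{counting2}. You claim that $T\Delta\cong T'\Delta$ forces $T\cong T'$ because ``$W(T)$ is recovered from $W(T)[C_2]$ via the augmentation homomorphism.'' The augmentation homomorphism is attached to a chosen presentation of a ring as a group ring; an abstract ring isomorphism $W(T)[C_2]\cong W(T')[C_2]$ need not carry the subring $W(T)$ to $W(T')$ nor the group element $x$ to $x'$, so there is no canonical augmentation to invoke, and group-ring cancellation is false for general rings. The statement $S\Delta\cong S'\Delta\iff S\cong S'$ is true for abstract Witt rings, but it requires the structure theory of these rings; the paper simply cites \cite[5.20]{Marshall_AbstractWittRings} for it. Either cite that result or supply an actual argument (e.g.\ recovering $T$ inside $T\Delta$ from the value groups $V_{S[x]}(ax)=\{1,-ax\}$ listed in \Cref{prop:group_extension}). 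The aside that $n>2$ is needed to exclude $\LL_1\times\LL_1\cong\LL_1\Delta$ is also slightly off target: that coincidence concerns decomposability, not the injectivity of $T\mapsto T\Delta$. Everything else is sound.
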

\begin{proof}
Recall that by Marshall's normalized decomposition in \ref{normalized decomposition}, every quaternionic structure $S$ decomposes as $\LL_{1,\varepsilon}^k\times S_1\times\ldots\times S_\ell$ for unique nondegenerate indecomposable quaternionic structures $S_1,\ldots,S_\ell$ (up to permutation and isomorphism) and unique $k\in \N_0$.
Further, we have $\varepsilon=0$ if and only if $s(S)=1$ and $\varepsilon=1$ else.
    
\ref{counting1}:    
The degenerate structures are clearly the ones having $k>0$ in the above notation.
If $S$ is a degenerate structure with $1=-1$, it can uniquely be written as $S\cong\LL_{1,0}\times S'$ for $S'\in \mathcal{E}'_{n-1}$.
This shows that there are $e'(n-1)$ degenerate structures with $1=-1$ having order $n$.
If $S$ is a degenerate structure with $1\neq -1$, we uniquely write $S\cong \LL_{1,1}\times S''$ for a structure $S''$ of order $n-1$ that either also satisfies $1\neq -1$ or is nondegenerate with $1=-1$. 
Since the number of degenerate structures of order $n-1$ satisfying $1=-1$ is $e'(n-2)$ by the above argument, there are 
\[(e(n-1)-e'(n-1))+(e'(n-1)-e'(n-2))=e(n-1)-e'(n-2)\] 
choices for $S''$.

\ref{counting2}:    
Every group extension of order $n>2$ is indecomposable by \Cref{ex:ets_totdeg_and_small_group_ext}, so the result follows from the fact that $S\Delta\cong S'\Delta\iff S\cong S'$ for any quaternionic structures $S,S'$ (cf.\cite[5.20]{Marshall_AbstractWittRings}).
    
\ref{counting3}:    
This is clear from the definitions and the list of small quaternionic structures in \Cref{list of small ets with invariants} along with \Cref{lem:rigidity_ets}\ref{lem:rigidity_ets1} and \ref{lem:rigidity_ets2}.
    
\ref{counting4}
\ref{counting41}:
The set $\mathcal{E}_0(n)$ contains exactly the pythagorean quaternionic structures of order $n$ by \Cref{lem:rigidity_ets}\ref{lem:rigidity_ets1}.
The only indecomposable elementary type structures of order $n$ are the local type structures and the group extensions.
By \Cref{lem:rigidity_ets}, local type structures are not pythagorean and since $n>2$, group extensions are pythagorean if and only if they are extensions of pythagorean structures of order $n-1$.
Hence, $\overline{e}_0(n)=e_0(n-1)$. 

For $e_0(n)$, we count the number of products of indecomposable pythagorean structures of smaller orders that are adding up to $n$, making use of \Cref{lem:rigidity_ets}\ref{lem:rigidity_ets4}.
In other words, we just count the number of possible products for each partition $(m_{0,1},\ldots,m_{0,n})\in P_{0,n}$ of the order $n$, we can choose $m_{0,i}$ factors from $\overline{\mathcal{E}_0}(i)$ for all $i$. 
This gives the desired equation.
    
\ref{counting42}:
By \Cref{lem:rigidity_ets}\ref{lem:rigidity_ets2}, group extensions of order $n>2$ are strictly $1$-rigid if and only if they are not pythagorean.
As in \ref{counting41} and taking into account \Cref{lem:rigidity_ets}\ref{lem:rigidity_ets3}, these are the only strictly $1$-rigid indecomposable elementary type structures, so the first part of the assertion follows.

For the second part, we proceed as in \ref{counting41} and note that by \Cref{lem:rigidity_ets}\ref{lem:rigidity_ets4}, $\mathcal{E}_1(n)$ consists of the nondegenerate elementary type structures that have exactly one strictly $1$-rigid factor in the normalized decomposition while all other factors are pythagorean.
Hence, such a structure can be uniquely written as the product of an indecomposable strictly $1$-rigid structure of order $j\in \{2,\ldots,n\}$ and a pythagorean structure of order $n-j$.
Note that we can assume $j\neq 1$ here as strictly $1$-rigid structures of order $1$ are degenerate and also that any pythagorean structure is nondegenerate since by \Cref{lem:rigidity_ets}\ref{lem:rigidity_ets4}, it cannot have any degenerate factor.
    
\ref{counting43}:
This is an immediate consequence of \Cref{lem:rigidity_ets}\ref{lem:rigidity_ets2} and \ref{lem:rigidity_ets3} as the only indecomposable elementary type structures of order $n>2$ are group extensions and local type structures.
    
\ref{counting44}:
By \Cref{lem:rigidity_ets}\ref{lem:rigidity_ets4}, for $S_\ell\in \mathcal{E}_\ell$ and $S_{\ell'}\in \mathcal{E}_{\ell'}$, we get $S_\ell\times S_{\ell'}\in \mathcal{E}_{\ell+\ell'}$. 
Hence, we have to count all products of nondegenerate indecomposable structures $S_{i,j}$ from $\mathcal{E}_i(j)$ such that $\sum_i i=k$ and $\sum_j j=n$. 
By definition of $P_{k,n}$, the desired equation follows in the same way as in \ref{counting41}.
\end{proof}

\begin{remark}\label{number of small ets}
    \Cref{counting ets} enables us to refine the list Carson and Marshall gave in \cite{CarsonMarshall_Decomposition} calculating $e(n)$ and $e'(n)$ for small values of $n$, see \Cref{tbl_e_n}. 
    These values will be used later on for the computations in \ref{sec_Computation}. 
    We get the following:
    \newcolumntype{x}[1]{%
    >{\raggedleft\hspace{0pt}}p{#1}}%
    \newcommand{\tn}{\tabularnewline}%
    \begin{center}
    \begin{table}[ht!]
\footnotesize
            \begin{tabular}{x{0.07\textwidth}|x{0.05\textwidth}|x{0.05\textwidth}|x{0.05\textwidth}|x{0.06\textwidth}|x{0.06\textwidth}|x{0.06\textwidth}|x{0.06\textwidth}|x{0.06\textwidth}|x{0.06\textwidth}|x{0.07\textwidth}}
            $n$&0&1&2&3&4&5&6&7&8&9\tn\hline
            $e(n)$&1&3&6&17&51&155&492&1600&5340&18150\tn
            $e'(n)$&1&1&2&4&10&22&54&130&328&832\tn
            $e_\text{d}(n)$&0&2&3&7&19&57&167&524&1676&5538\tn
            $e_0(n)$&1&1&1&2&4&8&17&36&79&175\tn
            $e_1(n)$&0&0&2&7&22&71&227&735&2414&8059\tn
            $e_2(n)$&0&0&0&1&4&14&60&234&903&3379\tn
            $e_3(n)$&0&0&0&0&2&4&13&45&173&667\tn
            $e_4(n)$&0&0&0&0&0&1&6&19&65&231\tn
            $e_5(n)$&0&0&0&0&0&0&2&6&19&65\tn
            $e_6(n)$&0&0&0&0&0&0&0&1&9&27\tn
            $e_7(n)$&0&0&0&0&0&0&0&0&2&8\tn
            $e_8(n)$&0&0&0&0&0&0&0&0&0&1
        \end{tabular}
        \caption{Small values of $e(n), e'(n), e_d(n), e_k(n)$} \label{tbl_e_n}
    \end{table}
    \end{center}
\end{remark}

\section{The Abstract 2-Brauer Group}\label{sec_2Brauer}

So far, our abstract quaternions form a set on which we have not imposed any structure.
Classically, the quaternions generate the $2$-torsion part of the so called \emph{Brauer group}.
We briefly recall this in the following:

\begin{remark}\label{rem:classical_Brauer}
    Let $F$ be a field of characteristic not $2$ with Witt ring $W$ and fundamental ideal $I$.
    The \emph{Brauer group} $\text{Br}(F)$ of $F$ consists of equivalence classes of the central simple algebras over $F$.
    We do not give any details here, but refer to \cite[chapter IV]{Lam_IntroQF} for a thorough introduction.
    In 1981, Merkurjev \cite{Merkurjev_KTheory} showed that $I^2/I^3$ is isomorphic to the $2$-torsion part of $\text{Br}(F)$ which is generated by the equivalence classes of quaternion algebras.
    More precisely, an isomorphism is given by mapping $\Pfister{a,b}+I^3$ to (the equivalence class of) $(a,b)_F$, the quaternion algebra over $F$ with slots $a$ and $b$.
\end{remark}

As for example pointed out in \cite[p. 139]{Lam_IntroQF}, it is possible to describe the $2$-torsion part of the Brauer group of a field by generators and relations.
The generators are the (equivalence classes of) quaternion algebras and the relations are bimultiplicativity and so called Steinberg relations.
We replicate this in the following way.

\begin{definition}
    Let $S=(G,-1,q)$ be a quaternionic structure with set of quaternions $Q$.
	The \emph{abstract $2$-Brauer group of $S$} $(B(S),\ast_S)$ or simply $(B,\ast)$ is the free abelian group generated by $Q$ with 
	\begin{align}
	    q(a,b)\ast q(a,c)=q(a,bc) \quad \text{for any } a,b,c\in G \tag{LR}\label{eq:Linkage_Relation}
    \end{align}
    as its sole relations.
	We usually briefly call this group the \emph{2B-group of $S$}.
	and $\ast$ the \emph{quaternionic product}.
	We refer to \eqref{eq:Linkage_Relation} as the \emph{linkage relation}.
\end{definition}

In \cite{MarshallYucasAP3}, Marshall and Yucas consider linked quaternionic mappings, which mean quaternionic structures where $Q$ embeds into a boolean group satisfying the linkage relations.
In this setting, the Arason-Pfister property $AP_3$ holds:

\begin{theorem}\label{thm:AP3_MY}\cite[3.5 and 3.7]{MarshallYucasAP3}
    Let $S=(G,-1,q)$ be a quaternionic structure with set of quaternions $Q$. 
    If the canonical map $Q\to B(S)$ is injective, then $S$ satisfies $AP_3$.
    In this case, mapping $q(a,b)\mapsto \Pfister{a,b}+I^3(S)$ induces an isomorphism of $B(S)$ and $I^2(S)/I^3(S)$.
\end{theorem}

\begin{remark}\label{rem:general_case_2B_group}
    It can be shown that in general, we have $B(S)\cong I^2(S)/I^3(S)$. 
    In particular, $AP_3$ holds if and only if $Q$ naturally embeds into $B(S)$.
    However, as we do not need this much generality and it appears to be difficult to check whether or not $Q$ embeds into $B(S)$, we do not give these results here.
    Instead, we proceed by additionally assuming - as in \cite{MarshallYucasAP3} - that $Q$ embeds into $B(S)$ for all considered quaternionic structures.
    This covers all cases of interest for this work since this holds if $S$ comes from a field, by the Arason-Pfister theorem.
\end{remark}

In the remainder of this work, all quaternionic structures are assumed to satisfy $AP_3$, meaning that their quaternions naturally embed into their abstract $2$-Brauer groups.

\begin{lemma}\label{lem:properties_2B} 
	Let $B$ be the 2B-group of a quaternionic structure $S$.
	The following hold:
	\begin{enumerate}[label=(\alph*)]
		\item\label{lem:properties_2B1} $B$ is a boolean group with neutral element $0=q(1,1)$.
		\item\label{lem:properties_2B2} For any $\alpha\in \N_0$ and $a,b\in G$, we have $q(a^\alpha,b)=q(a,b)^\alpha=q(a,b^\alpha)$.
		\item\label{lem:properties_2B3} If $S$ is of order $n\in \N_0$, then 
		\[\dim(B)\le \begin{cases} \binom{n}{2} &\text{ if } 1=-1 \\ \binom{n}{2}+1&\text{ else.}\end{cases}\]
	\end{enumerate}
\end{lemma}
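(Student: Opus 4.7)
The plan is to prove the three parts in order, observing that (a) falls out of the linkage relation after a single application, that (b) then follows from (a) combined with $G$ being boolean, and that (c) reduces to an $\F_2$-bilinear expansion together with a careful choice of basis.

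For (a), \Cref{lem:basic_properties_qs}\ref{lem:basic_properties_qs_2} gives $q(1,1)=0$, which is thus the identity of $B$. To show that $B$ is boolean it suffices to show each generator has order dividing $2$. Applying \eqref{eq:Linkage_Relation} with $c=b$ and using that $G$ is boolean yields
\[
q(a,b)\ast q(a,b) \;=\; q(a,b^2) \;=\; q(a,1) \;=\; 0.
\]
Since $B$ is abelian by construction, every element is then its own inverse. For (b), because $G$ is boolean, $a^\alpha$ equals $1$ or $a$ according to the parity of $\alpha$. In the even case $q(a^\alpha,b)=q(1,b)=0$ and $q(a,b)^\alpha=0$ by (a); in the odd case both sides equal $q(a,b)$. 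The identity $q(a,b)^\alpha = q(a,b^\alpha)$ follows by the same argument after applying the symmetry axiom \ref{QS2}.

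For (c), the central observation is an $\F_2$-bilinearity property: for any basis $g_1,\ldots,g_n$ of $G$ and elements $a=\prod_i g_i^{\alpha_i}$, $b=\prod_j g_j^{\beta_j}$, iterated use of \eqref{eq:Linkage_Relation} together with \ref{QS2} (to turn the first slot into the second) and part (b) yields
\[
q(a,b) \;=\; \sum_{i,j=1}^n \alpha_i\beta_j\, q(g_i,g_j) \quad\text{in } B.
\]
Thus $B$ is generated by the $q(g_i,g_j)$, and by \ref{QS2} we may restrict to pairs $i\le j$, giving at most $\binom{n}{2}+n$ generators. To sharpen this, split into cases. If $-1=1$, then \Cref{lem:basic_properties_qs}\ref{lem:basic_properties_qs_2} gives $q(g_i,g_i)=q(g_i,-1)=q(g_i,1)=0$, eliminating all $n$ diagonal generators and leaving the bound $\binom{n}{2}$. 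If $-1\neq 1$, choose the basis so that $g_1=-1$; then for each $i\ge 2$, $q(g_i,g_i)=q(g_i,-1)=q(g_1,g_i)$ already lies in the off-diagonal set, so only $q(g_1,g_1)$ can be genuinely new, yielding $\binom{n}{2}+1$. The mildly delicate step is the bilinear expansion; it is carried out by simultaneous induction on the number of basis elements appearing in $a$ and $b$, but no surprises arise and no additional axioms are needed beyond (a), (b), \eqref{eq:Linkage_Relation} and \ref{QS2}.
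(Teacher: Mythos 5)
Your proposal is correct and follows essentially the same route as the paper's own proof: the same one-line application of the linkage relation for (a), the same parity reduction for (b), and the same bilinear expansion over a basis with the diagonal terms absorbed via $q(g_i,g_i)=q(g_i,-1)$ for (c). No gaps.
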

\begin{proof}
	\ref{lem:properties_2B1}:
	By definition, we have $q(a,b)\ast q(a,b)=q(a,b^2)=q(a,1)=0$ and $q(a,b)\ast 0=q(a,b)\ast q(a,1)=q(a,b)$.
	Since $B$ is abelian by definition, it follows that $B$ is boolean with neutral element $0$. 
	
	\ref{lem:properties_2B2}:
	Since both $G$ and $B$ are boolean, we can reduce to the case $\alpha\in \{0,1\}$. 
	If $\alpha=0$, then all quaternions are trivial by \Cref{lem:basic_properties_qs}\ref{lem:basic_properties_qs_2}. 
	If $\alpha=1$, all exponents are trivial, so there is nothing to show.
	
	\ref{lem:properties_2B3}:
	Choose a basis $(a_0,\ldots,a_{n-1})$ of $G$. 
	Then $B$ is generated by all $q(\prod_{i\in I} a_i,\prod_{j\in J} a_j)$ for subsets $I,J\subset \{0,\ldots,n-1\}$. 
	Iteratively applying \eqref{eq:Linkage_Relation}, we get
	\[q(\prod_{i\in I} a_i,\prod_{j\in J} a_j)=\Ast_{i\in I} q(a_i,\prod_{j\in J} a_j)=\Ast_{i\in I,j\in J} q(a_i,a_j),\]
	so $B$ is generated by the $q(a_i,a_j)$ for $i,j\in \{0,\ldots,n-1\}$. 
	By symmetry \ref{QS2}, we can always assume $i\le j$ here. 
	Furthermore, choosing $a_0=-1$ if $-1\not=1\in G$, for any $i\in \{0,\ldots,n-1\}$, by \Cref{lem:basic_properties_qs}\ref{lem:basic_properties_qs_2}, we get 
	\[q(a_i,a_i)=q(-1,a_i)=\begin{cases} q(a_0,a_i) &\text{ if } -1\not=1\\ 0 &\text{ else.}\end{cases}\]
	Hence, $B$ is generated by 
	$(q(a_0,a_0),q(a_i,a_j)\mid 0\le i<j\le n-1)$ and in the case $-1=1\in G$, we can further exclude $q(a_0,a_0)=q(a_0,-a_0)=0$.
	The assertion follows.
\end{proof}

\begin{proposition}\label{prop:2B_of_product_and_extension}
    Let $S=(G,-1,q)$ and $S'=(G',-1',q')$ be quaternionic structures with 2B-groups $B:=B(S)$ and $B':=B(S')$.
	The following hold:
	\begin{enumerate}[label=(\alph*)]
		\item\label{prop:2B_of_product_and_extension1} 
		$B(S\times S')\cong B\times B'$.
		\item\label{prop:2B_of_product_and_extension2} 
		$B(S\Delta)\cong B\times G$. 
	\end{enumerate}
\end{proposition}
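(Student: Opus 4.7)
The plan is to construct explicit inverse homomorphisms in each case, relying on the universal property of the 2B-group: a map defined on quaternions extends to a group homomorphism out of $B(\cdot)$ as soon as it respects the linkage relation \eqref{eq:Linkage_Relation}. The map will be straightforward; the main verification is well-definedness, and then both composites being the identity.

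For part \ref{prop:2B_of_product_and_extension1}, the quaternions in $S\times S'$ are, by \Cref{prop:direct_product}, pairs $(q_1(a_1,b_1),q_2(a_2,b_2))$. I would define $\Phi:B(S\times S')\to B\times B'$ on generators by sending such a quaternion to itself viewed componentwise. Well-definedness reduces to checking that the linkage relation on $S\times S'$ decomposes into two independent linkage relations on $S$ and on $S'$, which is immediate from componentwise multiplication in $G\times G'$. For the inverse, define $\Psi$ as the sum of two partial maps: $\Psi_1(q_1(a_1,b_1)):=q_{S\times S'}((a_1,1),(b_1,1))$ and $\Psi_2(q_2(a_2,b_2)):=q_{S\times S'}((1,a_2),(1,b_2))$; these respect the respective linkage relations inside $B(S\times S')$. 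Checking $\Phi\circ\Psi=\mathrm{id}$ is immediate on generators. For $\Psi\circ\Phi=\mathrm{id}$, I would expand $q_{S\times S'}((a_1,a_2),(b_1,b_2))$ using bilinearity of $q$ in the 2B-group (which itself follows from linkage plus \ref{QS2}) and split $(a_1,a_2)=(a_1,1)(1,a_2)$, $(b_1,b_2)=(b_1,1)(1,b_2)$; the mixed terms $q_{S\times S'}((a_1,1),(1,b_2))$ and $q_{S\times S'}((1,a_2),(b_1,1))$ vanish because each has a slot equal to $1$ in one component and paired with $1$ in the other, so each of their components is $0$.

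For part \ref{prop:2B_of_product_and_extension2}, write $\xi=ax^\alpha$, $\eta=bx^\beta$ for elements of $G_x$, so that
\[q_x(\xi,\eta)=\bigl(q(a,b),(-1)^{\alpha\beta}a^\beta b^\alpha\bigr)\]
by \Cref{prop:group_extension}. I would define $\Phi:B(S\Delta)\to B\times G$ on generators by $\Phi(q_x(\xi,\eta)):=(q(a,b),(-1)^{\alpha\beta}a^\beta b^\alpha)$, treating $G$ additively as a boolean group. The linkage relation on $S\Delta$ decomposes neatly: in the $B$-coordinate it is the linkage relation of $S$, and in the $G$-coordinate it is a direct multiplicative identity $(-1)^{\alpha\beta}a^\beta b^\alpha \cdot (-1)^{\alpha\gamma}a^\gamma c^\alpha=(-1)^{\alpha(\beta+\gamma)}a^{\beta+\gamma}(bc)^\alpha$, using booleanness of $G$. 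Hence $\Phi$ is well-defined. For the inverse, I would define $\Psi_1:B\to B(S\Delta)$, $q(a,b)\mapsto q_x(a,b)$ (which trivially respects linkage), and $\Psi_2:G\to B(S\Delta)$, $a\mapsto q_x(a,x)$. That $\Psi_2$ is a homomorphism follows from $q_x(ab,x)=q_x(x,ab)=q_x(x,a)+q_x(x,b)$ by \ref{QS2} and \eqref{eq:Linkage_Relation}. Summing the two gives $\Psi:B\times G\to B(S\Delta)$.

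The composite $\Phi\circ\Psi$ is the identity on both coordinates of $B\times G$ by inspection (using $q_x(1,x)=0$ to kill the $G$-component of images of $\Psi_1$). The main small obstacle, and the one place I would carry out a short calculation, is verifying $\Psi\circ\Phi=\mathrm{id}$ on a generator $q_x(ax^\alpha,bx^\beta)$ in the case $\alpha=\beta=1$. Here I would expand
\[q_x(ax,bx)=q_x(a,b)+q_x(a,x)+q_x(b,x)+q_x(x,x),\]
use $q_x(a,x)+q_x(b,x)=q_x(ab,x)$ by linkage, and identify $q_x(x,x)=q_x(-1,x)$ via \Cref{lem:basic_properties_qs}\ref{lem:basic_properties_qs_2}, so that everything combines into $q_x(a,b)+q_x(-ab,x)$, which is exactly $\Psi(\Phi(q_x(ax,bx)))$. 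The remaining cases $\alpha,\beta\in\{0,1\}$ are easier and handled by the same bilinearity. This completes both isomorphisms.
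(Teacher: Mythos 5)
Your proof is correct. For part \ref{prop:2B_of_product_and_extension1} it is essentially the paper's argument (the quaternion set of $S\times S'$ is $Q\times Q'$ and the linkage relations split componentwise), merely made explicit by writing down the two inverse maps. For part \ref{prop:2B_of_product_and_extension2} your organization is genuinely different from the paper's. The paper works \emph{internally}: it sets $B':=\spn(q_x(a,b)\mid a,b\in G)$ and $G':=\spn(q_x(a,x)\mid a\in G)$, shows these generate $B(S\Delta)$ via the identity $q_x(ax^\alpha,bx^\beta)=q_x(a,b)\ast q_x((-1)^{\alpha\beta}a^\beta b^\alpha,x)$, and then must prove $B'\cong B$ and $B'\cap G'=\{0\}$ by analyzing exactly which identities $q_x(a,b)\ast q_x(c,d)=q_x(ex^\alpha,fx^\beta)$ can hold in a group presented solely by linkage relations --- the most delicate step, since it requires reducing an arbitrary consequence of the relations to a single common-slot witness. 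You instead define the \emph{external} map $\Phi$ by reading off the two coordinates $(q(a,b),(-1)^{\alpha\beta}a^\beta b^\alpha)$ of the quaternion itself, so well-definedness on the quaternion set is automatic and only the compatibility of $\Phi$ with linkage (a direct boolean computation in each coordinate) needs checking; the inverse $\Psi=\Psi_1\ast\Psi_2$ and the verification $\Psi\circ\Phi=\mathrm{id}$ then rest on the same key identity that the paper derives at the outset. What your approach buys is that it sidesteps the paper's relation-analysis entirely: both injectivity of $a\mapsto q_x(a,x)$ and the triviality of the intersection of the two spans fall out of having a two-sided inverse, rather than having to be argued from the structure of the presentation. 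What it gives up is only the explicit internal description of $B(S\Delta)$ as a sum of two distinguished subgroups, which the paper's later arguments do not actually need.
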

\begin{proof}
	\ref{prop:2B_of_product_and_extension1}:
	By \Cref{prop:direct_product}, the set of quaternions of $S\times S'$ is $Q\times Q'$. 
	The linkage relations for $S\times S'$ are thus exactly the linkage relations for $S$ and $S'$ and it follows that the 2B-group of $S\times S'$ is isomorphic to $B\times B'$.
	
	\ref{prop:2B_of_product_and_extension2}: 
	Write $S\Delta=S[x]=(G\cup xG, -1, q_x)$ and $B_x$ for its 2B-group. 
	Take $a,b\in G$ and $\alpha,\beta\in \{0,1\}$. 
	By \Cref{prop:group_extension}, the quaternions of $S\Delta$ can be viewed as a subset of $Q\times G$ with zero-element $(0,1)$, identifying $q_x(ax^\alpha,bx^\beta)$ with $(q(a,b),(-1)^{\alpha\beta}a^\beta b^\alpha)$. 
	Making use of \Cref{lem:basic_properties_qs}\ref{lem:basic_properties_qs_2} and \Cref{lem:properties_2B} \ref{lem:properties_2B3} along with the linkage relations for $q_x$, we also get
	\begin{align*}
		q_x(ax^\alpha,bx^\beta)&=q_x(a,b)\ast q_x(a,x^\beta)\ast q_x(b,x^\alpha)\ast q_x(x^\alpha,x^\beta)\\
		&=q_x(a,b)\ast q_x(a^\beta b^\alpha,x)\ast q_x(x,x)^{\alpha\beta}\\
		&=q_x(a,b)\ast q_x(a^\beta b^\alpha,x)\ast q_x(-1,x)^{\alpha\beta}\\
		&=q_x(a,b)\ast q_x((-1)^{\alpha\beta}a^\beta b^\alpha,x).
	\end{align*}
	Hence, $B_x$ is generated by $\{q_x(a,b),q_x(a,x)\mid a,b\in G\}$. 
	We define
	\[B':=\spn(q_x(a,b)\mid a,b\in G) \text{ and } G':=\spn(q_x(a,x)\mid a\in G).\] 
	Since $q_x(a,x)\ast q_x(b,x)=q_x(ab,x)$ for all $a,b\in G$, we have $G'\cong G$. 
	It remains to show that $B'\cong B$ and $B'\cap G'$ is trivial. 
	For the former, take $a,b,c,d,e,f\in G$ and $\alpha,\beta\in \{0,1\}$. 
	Since we only have \eqref{eq:Linkage_Relation} as relations, we have
	\begin{align}
		\quad\quad q_x(a,b)\ast q_x(c,d) = q_x(ex^\alpha,fx^\beta) \label{eq:2B_group_ext}
	\end{align}
	if and only if there exist $r,s,t\in G$ and $\rho,\sigma,\tau\in \{0,1\}$ such that
	\[q_x(a,b)=q_x(rx^{\rho},tx^{\tau}),\  q_x(c,d)=q_x(sx^{\sigma},tx^{\tau}),\  q_x(ex^\alpha,fx^\beta)=q_x(rsx^{\rho+\sigma},tx^{\tau}).\]
	Using the identification from \Cref{prop:group_extension}, this is equivalent to the existence of such $r,s,t,\rho,\sigma,\gamma$ satisfying
	\[q(a,b)=q(r,t),\ \ q(c,d)=q(s,t),\ \ q(e,f)=q(rs,t) \text{ and}\]
	\[(-1)^{\rho\tau}r^\tau t^\rho=1,\ \ (-1)^{\sigma\tau}s^\tau t^\sigma=1,\ \ (-1)^{(\rho+\sigma)\tau}(rs)^\tau t^{\rho+\sigma}=(-1)^{\alpha\beta}e^\beta f^\alpha.\]
	The term on the right hand side is the product of the two previous ones, so $(-1)^{\alpha\beta}e^\beta f^\alpha=1$. 
	If $\alpha=1$, this means that $f=(-e)^\beta$, so either $\beta=0$ and $f=1$, meaning $q_x(ex^\alpha,fx^\beta)=q_x(ex,1)$, or $\beta=1$ and $q_x(ex^\alpha,fx^\beta)=q_x(ex,-ex)$. 
	In both cases $q_x(ex^\alpha,fx^\beta)$ is the trivial quaternion $q_x(1,1)$. 
	All in all, this shows that \eqref{eq:2B_group_ext} holds if and only if
	\[q(a,b)\ast q(c,d)=q(e,f) \text{ and } (\alpha=\beta=0 \text{ or } q_x(a,b)=q_x(c,d)).\]
	Again by \Cref{prop:group_extension}, we have $q_x(a,b)=q_x(c,d)\iff q(a,b)=q(c,d)$.
	Hence, it follows that $B'$ is generated by the set $\{q_x(a,b)\mid a,b\in G\}$ with the exactly same (linkage) relations as $B$ with generators $\{q(a,b)\mid a,b\in G\}$ and thus $B\cong B'$. 
	Additionally, the above calculations show that the only element of $G'$ that is generated by $\{q_x(a,b)\mid a,b\in G\}$ is the trivial quaternion, so $B'\cap G'$ is trivial which remained to prove.
\end{proof}

\begin{corollary}\label{cor:2B_max_dim}
	Both upper bounds in \Cref{lem:properties_2B}\ref{lem:properties_2B3} are attained, taking the iterated group extensions $\LL_0\Delta_n$ or $\LL_1\Delta_{n-1}$.
\end{corollary}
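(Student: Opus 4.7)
The plan is to apply the isomorphism $B(S\Delta)\cong B(S)\times G(S)$ from Proposition~\ref{prop:2B_of_product_and_extension}\ref{prop:2B_of_product_and_extension2} iteratively, starting from the two base structures $\LL_0$ and $\LL_1$. First I would check the qualitative data: the group extension construction from Proposition~\ref{prop:group_extension} keeps the distinguished element $-1$ unchanged, so $\LL_0\Delta_n$ inherits $-1=1$ from $\LL_0$, while $\LL_1\Delta_{n-1}$ inherits $-1\ne 1$ from $\LL_1$; each extension raises the dimension of the square class group by $1$, so both structures have order $n$, putting them in the correct branch of the bound in Lemma~\ref{lem:properties_2B}\ref{lem:properties_2B3}.

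For the dimensions, iterating the isomorphism yields the telescoping formula
\[\dim B(T\Delta_k) \;=\; \dim B(T) + \sum_{j=0}^{k-1} \dim G(T\Delta_j).\]
Plugging in $T=\LL_0$ with $\dim G(\LL_0)=0$ and $\dim B(\LL_0)=0$ gives
\[\dim B(\LL_0\Delta_n) \;=\; \sum_{j=0}^{n-1} j \;=\; \binom{n}{2},\]
matching the first bound. Plugging in $T=\LL_1$ with $\dim G(\LL_1)=1$ and $\dim B(\LL_1)=1$ produces
\[\dim B(\LL_1\Delta_{n-1}) \;=\; 1 + \sum_{j=0}^{n-2}(1+j) \;=\; 1 + \binom{n}{2},\]
matching the second bound.

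The only step that is not just book-keeping is the base case $\dim B(\LL_1)=1$, so that is where I expect the main (small) obstacle. Everything else is forced by the axioms: in $\LL_1 = S_\R$ the only candidate for a nontrivial generator is $q(-1,-1)$, and to rule out $q(-1,-1)=0$ I would invoke the realization as $S_\R$ together with the classical fact that the Hamilton quaternion algebra $(-1,-1)_\R$ does not split, i.e.\ is not Witt-equivalent to the split quaternion algebra. Combined with the upper bound $\dim B(\LL_1)\le \binom{1}{2}+1 = 1$ from Lemma~\ref{lem:properties_2B}\ref{lem:properties_2B3}, this pins down $\dim B(\LL_1)=1$ and completes the argument.
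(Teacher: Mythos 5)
Your proposal is correct and follows essentially the same route as the paper: iterate the isomorphism $B(S\Delta)\cong B(S)\times G$ from \Cref{prop:2B_of_product_and_extension}\ref{prop:2B_of_product_and_extension2} starting from $\LL_0$ and $\LL_1$ and sum the resulting telescoping series. The only cosmetic difference is the base case $\dim B(\LL_1)=1$, which the paper settles by noting the unique nonzero quaternion must form a basis of $B(\LL_1)$, whereas you appeal to the realization $\LL_1\cong S_\R$ and the non-splitness of the Hamilton quaternions; both amount to the same observation.
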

\begin{proof}
	Clearly, the 2B-group of $\LL_0$ has dimension 0, so it follows from \Cref{prop:2B_of_product_and_extension}\ref{prop:2B_of_product_and_extension2} that the 2B-group of $\LL_0\Delta$ has dimension $0+0=0$, that of $\LL_0\Delta_2$ has dimension $(0+0)+1$ and iteratively, that of $\LL_0\Delta_n$ has dimension
	\[0+0+1+\ldots+(n-1)=\binom{n}{2}.\]
	The 2B-group of $\LL_1$ has dimension 1 as the unique nonzero quaternion necessarily must form a basis. 
	Hence, with the same iteration as before, the 2B-group of $\LL_1\Delta_{n-1}$ has dimension
	\[1+1+\ldots+(n-1)=\binom{n}{2}+1.\qedhere\] 
\end{proof}

\section{Quaternionic Matrices}\label{sec_QM}

Recall that we henceforth assume that we can naturally embed the quaternions of a quaternionic structure into its abstract $2$-Brauer group.

\begin{definition}
	Let $S$ be a quaternionic structure. 
	We refer to the \emph{Q-dimension} of $S$, written $\text{Qdim}(S)$ as the dimension of its abstract $2$-Brauer group. 
\end{definition}

In the following, we only consider structures of finite order $n\in \N_0$. 
By \Cref{lem:properties_2B}\ref{lem:properties_2B3}, these also have finite $Q$-dimension and are uniquely described by the quaternions $q(a_i,a_j)$ generating the abstract $2$-Brauer group where $(a_i|i\in \{0,\ldots,n-1\})$ is a basis of the square class group of $S$.
We want to make use of that fact to describe quaternionic structures by $n\times n$-matrices with quaternions as entries.
To do so, we identify the quaternions with subsets of $\N_0$.

This will be the groundwork for our computations in \Cref{sec_Computation}.
As before, write $S=(G,-1,q)$ with quaternions $Q$ and abstract $2$-Brauer group $B(S)\cong I(S)^2/I(S)^3$ where $I(S)$ is the fundamental ideal of the Witt ring $W(S)$.

\begin{definition}
	For $k\in \N_0$, we take $I_k\subset \N_0$ such that $k=\sum_{i\in I_k} 2^{i}$ is the binary representation of $k$. 
	Then the binary operation $\ast_{\N_0}$ on $\N_0$ where
	\[k\ast_{\N_0} l:= \sum_{i\in I_k\triangle I_l} 2^{i}\]
	defines a boolean group where any set $\{0,1,\ldots,2^n-1\}$ defines a subgroup with basis $(2^k\mid 0\le k\le n-1)$.
	We call $\ast_{\N_0}$ the \emph{quaternionic product on $\N_0$}.
	Note that $\ast_{\N_0}$ is simply the xor operation in binary expansion.
	We often simply write $\ast$ instead of $\ast_{\N_0}$.
\end{definition}

\begin{definition}
	We say that a basis $\mathcal{A}=(a_0,\ldots,a_{n-1})$ of $G$ is a \emph{basis of S} if $-1=1$ or $a_0=-1$. 
	Given a basis $\mathcal{B}=(q_0,\ldots,q_{m-1})$ of $B$, we define the group isomorphism 
	\[\phi_\mathcal{B}: (B,\ast)\to (\{0,\ldots,2^m-1\},\ast)\quad\text{with}\quad q_k\mapsto 2^k.\]
	If $n\neq 0$, we define the \emph{quaternionic matrix of $S$ via $\mathcal{A},\mathcal{B}$} as 
	\[M_{\mathcal{A},\mathcal{B}}(S):=(\phi_\mathcal{B}(q(a_i,a_j)))_{0\le i,j\le n-1}\in \Mat_n(\N_0).\]
	If $\mathcal{B}$ is fixed, we often identify $(B,\ast)$ with $(\{0,\ldots,2^m-1\},\ast)$, writing $q_\mathcal{B}(a,b)$ or -- in abuse of notation -- simply $q(a,b)$ instead of $\phi_\mathcal{B}(q(a,b))$.	
\end{definition}

\begin{example}\label{ex:qm_basic_examples}
	\begin{enumerate}[label=(\alph*)]
		\item If $S$ is totally degenerate of order $n\in \N$, all quaternions are split, so $M_{\mathcal{A},\mathcal{B}}(S)$ is the $n\times n$-matrix consisting only of zeros for any choice of basises $\mathcal{A}$ of $S$ and $\mathcal{B}$ of $B(S)$.
		\item If $|Q|=2$, the matrix $M_{\mathcal{A},\mathcal{B}}(S)$ only depends on the choice of the basis $\mathcal{A}$ of $S$ since all split quaternions are associated with the entry $0$ and all others with the entry $1$. 
		More precisely, $M_{\mathcal{A},\mathcal{B}}(S)$ is just the Gram matrix of the symmetric bilinear map $q:G\times G\to \F_2$ for the basis $\mathcal{A}$.
		As Gram matrices, they depend heavily on the choice of the basis of $S$.
		We already point out here that the choice of bases is slightly reduced as we always take $-1$ as the first basis element whenever $-1\neq 1$.
		\item If $S$ is of order $1$, there is only one choice of basis of $S$, consisting of the nontrivial element of $G$, so we get a unique quaternionic matrix for each such structure.
		The totally degenerate $\LL_{1,0}$ and $\LL_{1,1}$ both yield the $1\times 1$-matrix $(0)$ and $\LL_1$ the matrix $(1)$.
		
	\end{enumerate}
\end{example}

In general, the matrix $M_{\mathcal{A},\mathcal{B}}$ is heavily dependent on the choice of the basises $\mathcal{A}$ and $\mathcal{B}$.
We compare matrices in the following way:

\begin{notation}
	For $n\in \N$, we bestow the set $\Mat_n(\N_0)$ with the lexicographical order $<_\text{lex}$, applied row by row, and extending the usual order on $\N_0$. 
	We simply say that a matrix is 'smaller/greater/...' than another if it is lexicographically smaller/greater/...
\end{notation}

\begin{definition}
	Let $\mathcal{A}$ be a basis of $S$ (recall that this means that $1=-1$ or $-1$ is the first basis element).
	Then the \emph{reduced quaternionic matrix of $S$ via $\mathcal{A}$} is defined as 
	\[M_\mathcal{A}(S):=\text{min}\{M_{\mathcal{A},\mathcal{B}}(S)\mid \mathcal{B}\text{ is a basis of } B(S)\}\]
	with regard to the order introduced above.
	In the same way, the \emph{normal quaternionic matrix of $S$} $M(S)$ is defined as the minimum of all reduced quaternionic matrices of $S$.
\end{definition}

Clearly, this is well defined as we assume $S$ to be finite, so there is only a finite number of quaternionic matrices of $S$.

We collect some properties of quaternionic matrices in order to later determine when a matrix determines a quaternionic structure.
We can use the quaternionic product on $\N_0$ to calculate all quaternions starting from a quaternionic matrix in the following way:

\begin{definition}
	Given $n\in \N$ and a matrix $M\in M_n(\N_0)$, we define the \emph{completion of $M$} as the matrix 
	\[\overline{M}:=(\overline{M}_{k,\ell})_{0\le k,\ell\le 2^n-1}\in\Mat_{2^n}(\N_0)\quad\text{with}\quad
	\overline{M}_{k,\ell}=\Ast_{(i,j)\in I_k\times I_\ell} M_{i,j}.\]
	Note that we usually write $\overline{M}_{I_k,I_\ell}:=\overline{M}_{k,\ell}$, using subsets of $\{0,\ldots,n-1\}$ as indices. 
	For $k=0$ or $\ell=0$, the above definition gives an empty product, meaning that the leading row and column of $\overline{M}$ consist only of zeros. 
\end{definition}

\begin{lemma}\label{lem:properties_qm}
	Let $\mathcal{A}=(a_0,\ldots,a_{n-1})$ be a basis of $S$ and $\mathcal{B}$ a basis of $B$. 
	Set $M:=M_{\mathcal{A},\mathcal{B}}(S)$. 
	The following hold:
	\begin{enumerate}[label=(\alph*)]
		\item\label{lem:properties_qm1} For any $I,J\subset \{0,\ldots,n-1\}$, we have $\overline{M}_{I,J}=q_\mathcal{B}(\prod_{i\in I} a_i,\prod_{j\in J} a_j)$.
		\item\label{lem:properties_qm2} $M$ satisfies the following:
		\begin{enumerate}[label=(M\arabic*)]
			\item\label{M1} $M_{i,i}=M_{0,i}$ for all $i\in \{0,\ldots,n-1\}$ \emph{ or } $M_{i,i}=0$ for all $i\in \{0,\ldots,n-1\}$
			\item\label{M2} $M$ is symmetric
			\item\label{M3} $\overline{M}_{I,J}=\overline{M}_{K,L}\implies \exists X\!\subset\! \{0,\ldots,n-1\}:\  \overline{M}_{I,J}=\overline{M}_{I,X}=\overline{M}_{K,X}$\\
			for all $I,J,K,L\subset \{0,\ldots,n-1\}$
		\end{enumerate}
		\item\label{lem:properties_qm3} The basis $\mathcal{B}$ can be chosen such that $M$ also satisfies 
		\begin{enumerate}[label=(red)]
			\item\label{red} $M_{i,j}\le \Min\{2^k\mid M_{i',j'}<2^k \ \forall\  (i',j')<_{\text{lex}} (i,j)\})$ for all $i,j\in \{0,\ldots,n-1\}$.
		\end{enumerate}
		\ref{red} holds if and only if $M$ is reduced, meaning $M=M_\mathcal{A}(S)$.
	\end{enumerate}
\end{lemma}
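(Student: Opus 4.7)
The plan is as follows. Part~(a) is an iterated application of the linkage relation \eqref{eq:Linkage_Relation}, following the same unfolding used in the proof of \Cref{lem:properties_2B}\ref{lem:properties_2B3}. Since $\phi_\mathcal{B}$ is a group isomorphism between $(B,\ast)$ and $(\{0,\ldots,2^m-1\},\ast)$ (with $m:=\dim(B)$), it commutes with the iterated quaternionic product, giving
\[
  \overline{M}_{I,J} \;=\; \Ast_{(i,j)\in I\times J} \phi_\mathcal{B}\bigl(q(a_i,a_j)\bigr) \;=\; \phi_\mathcal{B}\!\left(q\!\left(\prod_{i\in I} a_i,\, \prod_{j\in J} a_j\right)\right).
\]
For part~(b), property \ref{M2} is immediate from \ref{QS2}; for \ref{M1}, \Cref{lem:basic_properties_qs}\ref{lem:basic_properties_qs_2} gives $q(a_i,a_i)=q(-1,a_i)$, which is $0$ when $-1=1$ and equals $M_{0,i}$ otherwise (since then $a_0=-1$ by the basis convention). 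For \ref{M3}, translate $\overline{M}_{I,J}=\overline{M}_{K,L}$ via~(a) into an equation $q(A,B)=q(C,D)$ of quaternions, invoke \ref{QS4} to obtain a common slot $x\in G$, write $x=\prod_{i\in X}a_i$ uniquely in the basis $\mathcal{A}$, and translate back through~(a).

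For part~(c), I would first construct $\mathcal{B}$ greedily: enumerate the pairs $(i,j)\in \{0,\ldots,n-1\}^2$ in lexicographic order; start with the empty tuple; at each step append $q(a_i,a_j)$ iff it is not already in the span of the chosen vectors. The proof of \Cref{lem:properties_2B}\ref{lem:properties_2B3} shows that the $q(a_i,a_j)$ generate $B$, so $\mathcal{B}$ ends up as a basis. Writing $k$ for the length of $\mathcal{B}$ immediately before processing $(i,j)$, we get $\phi_\mathcal{B}(q(a_i,a_j)) \in \{0,\ldots,2^k-1\}$ in the span case and $\phi_\mathcal{B}(q(a_i,a_j)) = 2^k$ in the new case. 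A short induction identifies this $k$ with $k^\ast(i,j) := \min\{k\mid M_{i',j'}<2^k\text{ for all }(i',j')<_\text{lex}(i,j)\}$ and, simultaneously, with the basis-independent dimension of $W_{(i,j)}:=\spn\{q(a_{i'},a_{j'})\mid (i',j')<_\text{lex}(i,j)\}\subseteq B$. This yields \ref{red}.

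It remains to verify that \ref{red} characterizes reducedness. Let $M$ satisfy \ref{red}, let $M':=M_{\mathcal{A},\mathcal{B}'}(S)$ be any other quaternionic matrix, and compare them in lex order. At a first differing position $(i,j)$, if $q(a_i,a_j)\in W_{(i,j)}$, the same $\F_2$-linear combination of previous quaternions forces $M_{i,j}=M'_{i,j}$, contradicting that they differ. Otherwise, using the identification $\dim W_{(i,j)}=k^\ast(i,j)$, the agreed-upon values $M_{i',j'}=M'_{i',j'}<2^{k^\ast(i,j)}$ span the full subspace $\phi_{\mathcal{B}'}(W_{(i,j)})=\{0,\ldots,2^{k^\ast(i,j)}-1\}$, and $q(a_i,a_j)\notin W_{(i,j)}$ forces $M'_{i,j}\ge 2^{k^\ast(i,j)}\ge M_{i,j}$, hence $M<_\text{lex} M'$ and $M=M_\mathcal{A}(S)$. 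The converse is then immediate: the greedy $M^\ast$ from the previous paragraph satisfies \ref{red}, and the minimality argument shows $M^\ast=M_\mathcal{A}(S)$, so any reduced matrix must satisfy \ref{red}. The main obstacle is exactly this minimality step: one has to notice that $k^\ast(i,j)$, a priori defined from a single matrix, coincides with $\dim W_{(i,j)}$ as soon as \ref{red} holds, which is what enables pinning down $\phi_{\mathcal{B}'}(W_{(i,j)})$ as the full subspace $\{0,\ldots,2^{k^\ast(i,j)}-1\}$ and extracting the lower bound on $M'_{i,j}$ in the ``new'' case.
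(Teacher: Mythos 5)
Your proposal is correct and follows essentially the same route as the paper: part (a) by unfolding the linkage relation through the isomorphism $\phi_\mathcal{B}$, part (b) directly from \ref{QS2}, \ref{QS4} and \Cref{lem:basic_properties_qs}, and part (c) via the same greedy lexicographic construction of $\mathcal{B}$ with the identification of the running basis length with the exponent in \ref{red}. Your closing minimality argument (comparing against an arbitrary $M_{\mathcal{A},\mathcal{B}'}(S)$ at the first differing entry) just spells out the step the paper compresses into ``it follows from the construction,'' and it is sound.
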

\begin{proof}
	Write $m:=\text{Qdim}(S)$.
	
	\ref{lem:properties_qm1}:
	Since $\phi_\mathcal{B}$ is an isomorphism, we have 
	\[q_\mathcal{B}\left(\prod_{i\in I} a_i, \prod_{j\in J} a_j\right)=\Ast_{(i,j)\in I\times J} q_\mathcal{B}(a_i,a_j)\]
	and since $q_\mathcal{B}(a_i,a_j)=M_{i,j}$, the above equals $\overline{M}_{I,J}$ by definition.
	
	\ref{lem:properties_qm2}:
	The properties \ref{M2} and \ref{M3} follow directly from \ref{QS2} and \ref{QS4}. 
	For \ref{M1}, it suffices to see that $q(a,a)=q(a,-1)$ for all $a\in G$ by \Cref{lem:basic_properties_qs}\ref{lem:basic_properties_qs_2} and this equals $0$ if $1=-1$ in $G$. 
	Since we have chosen $a_0=-1$ in the other case, \ref{M1} follows. 
	
	\ref{lem:properties_qm3}:
	Since the $q(a_i,a_j)$ generate $B$ by \Cref{lem:properties_2B}\ref{lem:properties_2B3}, we can choose $\mathcal{B}$ from these quaternions. 
	Take $k\in \N_0$. 
	If $q_0,\ldots,q_{k-1}$ are defined and $k<m$, set $q_k:=q(a_{i_k},a_{j_k})$ such that $(i_k,j_k)$ is lexicographically minimal such that $q(a_{i_k},a_{j_k})\not\in \spn(q_0,\ldots,q_{k-1})$. 
	This iteratively defines a basis $\mathcal{B}=(q_0,\ldots,q_{m-1})$ of $B$. 
	To show \ref{red}, take $i,j\in \{0,\ldots,n-1\}$. 
	If $(i,j)<_{\text{lex}}(i_0,j_0)$, then $q(a_i,a_j)=0$ and thus also $M_{i,j}=0$. 
	If $(i,j)>_\text{lex}(i_{m-1},j_{m-1})$, there is nothing to show because all entries of $M$ are in $\{0,\ldots,2^m-1\}=\spn(q_0,\ldots,q_{m-1})$.
	
	Otherwise, we can take $k\in \{0,\ldots,m-1\}$ minimal such that $(i_k,j_k)\ge_{\text{lex}} (i,j)$. 
	Then for $(i',j')<_{\text{lex}} (i,j)$, we have $q(a_{i'},a_{j'})\in \spn(q_0,\ldots,q_{k-1})$ by construction of $\mathcal B$ and thus 
	\[M_{i',j'}=q_\mathcal{B}(a_{i'},a_{j'})\in \spn(1,2,\ldots,2^{k-1})=\{0,\ldots,2^{k}-1\}.\]
	If $(i,j)=(i_k,j_k)$, then $M_{i,j}=2^k$. 
	Otherwise it follows as for $(i', j')$ above that $M_{i,j}<2^k$. 
	If $k\neq 0$, then $2^{k-1}=M_{i_{k-1},j_{k-1}}$ and $i_{k-1},j_{k-1}<(i,j)$, so $k$ is the minimum as described in \ref{red}.
	
	It follows from the construction that this choice of $\mathcal{B}$ gives the lexicographical minimal $M_{\mathcal{A},\mathcal{B}}(S)$ for fixed $\mathcal{A}$.
\end{proof}

\begin{definition}
	Take $n\in \N$ and $M\in \Mat_n(\N_0)$. 
	We call $M$ a \emph{prequaternionic matrix} if it satisfies \ref{M1}, \ref{M2}, \ref{M3}. 
	We say that $M$ is \emph{reduced} if $M$ also satisfies \ref{red}. 
	If $M$ is a prequaternionic matrix, we say that $M$ is of \emph{type 1} if the first condition of \ref{M1} is satisfied and of \emph{type 0} if the second condition of \ref{M1} is satisfied.  
\end{definition}

\Cref{lem:properties_qm} shows that in the notation used there, $M_{\mathcal{A},\mathcal{B}}(S)$ is a prequaternionic matrix and $M_\mathcal{A}(S)$ is a reduced prequaternionic matrix. 

Prequaternionic matrices already define unique quaternionic structures as we establish in the following. 
However, we will see later on that in order to properly reflect the properties of the abstract $2$-Brauer groups, we need another technical detail to define \emph{quaternionic matrices}.
This will be pointed out in \Cref{rem:necessity_linkage_matrices} and \Cref{ex:linkage_matrices}.

\begin{lemma}\label{lem:properties_completion}
	Let $M\in \Mat_{n}(\N_0)$ be a prequaternionic matrix of type $\epsilon\in \{0,1\}$ with completion $\overline{M}$. 
	Then the following hold:
	\begin{enumerate}[label=(\alph*)]
		\item\label{lem:properties_completion1} 
		$\overline{M}$ is symmetric.
		\item\label{lem:properties_completion2} 
		If $\epsilon=0$, then $\overline{M}_{I,I}=0$ for all $I\subset \{0,\ldots,n-1\}$.
		
		If $\epsilon=1$, then $\overline{M}_{I,I}=\overline{M}_{\{0\},I}$ for all $I\subset \{0,\ldots,n-1\}$.
		\item\label{lem:properties_completion3} 
		$\overline{M}_{I,J}\ast\overline{M}_{I,K}=\overline{M}_{I,J\triangle K}$ for all $I,J,K\subset \{0,\ldots,n-1\}$.
		\item\label{lem:properties_completion4} 
		$\overline{M}_{\{i\},\{j\}}=M_{i,j}$ for all $i,j\in \{0,\ldots,n-1\}$.
	\end{enumerate}
\end{lemma}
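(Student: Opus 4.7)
The plan is to verify each of the four assertions directly from the definition $\overline{M}_{I,J} = \Ast_{(i,j)\in I\times J} M_{i,j}$, relying on two basic facts throughout: $(\N_0,\ast)$ is a boolean group, so any element appearing an even number of times in a $\ast$-product cancels to $0$; and the prequaternionic matrix $M$ is symmetric by \ref{M2} with diagonal governed by \ref{M1}. Axiom \ref{M3} plays no role in this lemma, since it is the analogue of \ref{QS4} and will only enter when one attempts to recover a quaternionic structure from $M$.

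Parts (d) and (a) are immediate. For (d), the $\ast$-product indexed by $\{i\}\times\{j\}$ consists of the single factor $M_{i,j}$. For (a), the bijection $(i,j)\mapsto(j,i)$ between $I\times J$ and $J\times I$, combined with $M_{i,j}=M_{j,i}$ from \ref{M2} and the commutativity of $\ast$, yields $\overline{M}_{I,J}=\overline{M}_{J,I}$.

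For (c), I would concatenate the two products on the left into a single $\ast$-product indexed by the disjoint union $(I\times J)\sqcup(I\times K)$. Grouping by $i\in I$, for each fixed $i$ the factor $M_{i,\ell}$ appears exactly once when $\ell\in J\triangle K$ and exactly twice when $\ell\in J\cap K$; booleanness kills the latter contributions, leaving $\Ast_{(i,\ell)\in I\times(J\triangle K)} M_{i,\ell} = \overline{M}_{I,J\triangle K}$.

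For (b), I would split $\overline{M}_{I,I}=\Ast_{(i,j)\in I\times I}M_{i,j}$ according to whether the two indices coincide. The off-diagonal part pairs $(i,j)$ with $(j,i)$ for $i\ne j$ in $I$; by \ref{M2} these contribute the same element $M_{i,j}=M_{j,i}$ twice and therefore cancel. What remains is $\Ast_{i\in I} M_{i,i}$. In type $0$ each $M_{i,i}=0$ by \ref{M1}, giving $\overline{M}_{I,I}=0$. In type $1$, \ref{M1} replaces $M_{i,i}$ by $M_{0,i}$, and the surviving product equals $\Ast_{i\in I}M_{0,i}=\overline{M}_{\{0\},I}$ by definition. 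There is no serious obstacle here; the only care needed is precise bookkeeping of multiplicities in the unordered xor sum when the index set is rewritten.
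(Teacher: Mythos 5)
Your proposal is correct and follows essentially the same route as the paper's proof: each part is verified directly from the definition of $\overline{M}$, using the symmetry axiom \ref{M2} and the booleanness of $(\N_0,\ast)$ to cancel paired factors in (a) and (b), the multiplicity count over $J\triangle K$ versus $J\cap K$ in (c), and the diagonal axiom \ref{M1} for the two cases of (b). Your observation that \ref{M3} is not needed here is also consistent with the paper's argument.
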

\begin{proof}
	\ref{lem:properties_completion1} is an immediate consequence of \ref{M2} and the definition of $\overline{M}$. 
	Now take $I,J,K\subset\{0,\ldots,n-1\}$. 
	We get \ref{lem:properties_completion2} by
	\begin{align*}
		\overline{M}_{I,I}
		&=&\Ast_{(i,j)\in I\times I} M_{i,j}
		&=&\Ast_{i\in I} M_{i,i}\ast \Ast_{(i,j)\in I\times I, i<j} \underbrace{(M_{k,l}\ast M_{l,k})}_{\tcap{\ref{M2}}{=}0}\\
		&=&\Ast_{i\in I} M_{i,i}
		&=&\begin{cases} \Ast_{i\in I} 0=0 &\text{if } \epsilon=0 \\ \Ast_{i\in I} M_{0,i}=\overline{M}_{\{0\},I} &\text{if } \epsilon=1.\end{cases}
	\end{align*}
	In a similar fashion, we get \ref{lem:properties_completion3} by computing
	\begin{align*}
		\overline{M}_{I,J}\ast\overline{M}_{I,K}
		&=&\Ast_{(i,j)\in I\times J} M_{i,j}\ast \Ast_{(i,k)\in I\times K} M_{i,k}\\
		&\tcap{boolean}{=}& \Ast_{(i,j)\in I\times (J\triangle K)} M_{i,j}=\overline{M}_{I,J\triangle K}.
	\end{align*}
	Lastly, \ref{lem:properties_completion4} follows directly from the definition.
\end{proof}


The following shows that any prequaternionic matrix defines a quaternionic structure from which we can then recover the zeros in the completion of the matrix.

\begin{proposition}\label{prop:prequaternionic_matrices}
	Let $M\in \Mat_n(\N_0)$ be a prequaternionic matrix of type $\epsilon\in \{0,1\}$. 
	Let $G$ be a boolean group with neutral element $1$ and basis $\mathcal{A}:=(a_0, \ldots, a_{n-1})$. 
	If $\epsilon=0$, choose $-1:=1\in G$, else $-1:=a_0$. 
	Define $Q$ as the set of entries of $\overline{M}$, naturally containing 0. 
	Then
	\[q:G\times G\to Q,\quad \left(\prod_{i\in I} a_i, \prod_{j\in J} a_j \right)\mapsto \overline{M}_{I,J}\]
	defines a quaternionic structure $S(M)=(G, -1, q)$ with $s(S)=1$ if $\epsilon=0$ and $s(S)>1$ else. 
	For any basis $\mathcal{B}$ of the abstract $2$-Brauer group of $S(M)$ and $I,J\subset\{0,\ldots,n-1\}$, it satisfies
	\[\overline{M}_{I,J}=0 \iff \overline{M_{\mathcal{A},\mathcal{B}}(S(M))}_{I,J}=0\]
\end{proposition}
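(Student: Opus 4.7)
The plan is to verify the four axioms \ref{QS1}--\ref{QS4} by translating each one, via the basis expansion $a = \prod_{i \in I} a_i$, into a statement about the completion $\overline{M}$, and then to appeal to \Cref{lem:properties_completion}. The map $q$ is well defined because $G$ is boolean with basis $\mathcal{A}$, so every element of $G$ has a unique such representation; the set $Q$ of entries of $\overline{M}$ contains $\overline{M}_{\emptyset,\emptyset} = 0$ (the empty quaternionic product), which serves as the distinguished zero.

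The two straightforward axioms come first. \ref{QS2} is exactly the symmetry of $\overline{M}$ (\Cref{lem:properties_completion}\ref{lem:properties_completion1}). For \ref{QS3}, if $b = \prod_{j \in J} a_j$ and $c = \prod_{k \in K} a_k$, then $bc$ corresponds to $J \triangle K$, so the bilinearity identity $\overline{M}_{I,J} \ast \overline{M}_{I,K} = \overline{M}_{I, J \triangle K}$ (\Cref{lem:properties_completion}\ref{lem:properties_completion3}), together with the fact that $Q \subseteq (\N_0, \ast)$ is a boolean group, turns $\overline{M}_{I,J} = \overline{M}_{I,K}$ into $\overline{M}_{I, J \triangle K} = 0$, which is $q(a, bc) = 0$. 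For \ref{QS1}, the case $\epsilon = 0$ reduces to $-a = a$ and is immediate from \Cref{lem:properties_completion}\ref{lem:properties_completion2}. For $\epsilon = 1$, the element $-a$ corresponds to $I \triangle \{0\}$, and using \Cref{lem:properties_completion}\ref{lem:properties_completion3}, the identity $\overline{M}_{I,I} = \overline{M}_{\{0\},I}$ from \ref{lem:properties_completion2}, and symmetry, one computes
\[
\overline{M}_{I, I \triangle \{0\}} = \overline{M}_{I,I} \ast \overline{M}_{I,\{0\}} = \overline{M}_{\{0\}, I} \ast \overline{M}_{\{0\}, I} = 0.
\]
The nontrivial direction of \ref{QS4} is exactly what property \ref{M3} encodes: given $\overline{M}_{I,J} = \overline{M}_{K,L}$, choose the $X$ promised by \ref{M3} and set $x := \prod_{i \in X} a_i$; the remaining equality $q(c,x) = q(c,d)$ is automatic since all four values agree.

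The level statement reduces to $s(S(M)) = 1 \iff -1 = 1 \iff \epsilon = 0$ by the choice of $-1$ in $G$. For the final compatibility, \Cref{lem:properties_qm}\ref{lem:properties_qm1} applied to $S(M)$ and the basis pair $(\mathcal{A}, \mathcal{B})$ yields
\[
\overline{M_{\mathcal{A},\mathcal{B}}(S(M))}_{I,J} = \phi_\mathcal{B}\bigl(q\bigl(\tcap{}{\textstyle\prod_{i \in I} a_i}, \tcap{}{\textstyle\prod_{j \in J} a_j}\bigr)\bigr).
\]
By the definition of $q$, the argument of $\phi_\mathcal{B}$ is precisely the quaternion $\overline{M}_{I,J} \in Q$, and since $\phi_\mathcal{B}$ is a group isomorphism onto $\{0,\ldots,2^m-1\}$, it sends exactly the zero quaternion to $0$. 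The claimed equivalence follows.

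The main conceptual point, rather than a real obstacle, is recognizing that \ref{M1}, \ref{M2}, \ref{M3} are designed precisely so that \ref{QS1}, \ref{QS2}, \ref{QS4} become algebraic identities on $\overline{M}$, while the bimultiplicativity of $\ast$ inherited by $\overline{M}$ delivers \ref{QS3}; once this translation is set up, every check is a short unpacking of definitions.
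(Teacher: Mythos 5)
Your proposal is correct and follows essentially the same route as the paper: each axiom \ref{QS1}--\ref{QS4} is checked via \Cref{lem:properties_completion} and \ref{M3}, including the identical computation $\overline{M}_{I,I\triangle\{0\}}=\overline{M}_{I,I}\ast\overline{M}_{I,\{0\}}=0$ for \ref{QS1} in the case $\epsilon=1$, and the final zero-compatibility is the same observation that the zeros of the completion do not depend on the choice of basis of the $2$-Brauer group. Your write-up is somewhat more explicit than the paper's (notably on the converse direction of \ref{QS4} and on invoking \Cref{lem:properties_qm}\ref{lem:properties_qm1} for the last equivalence), but there is no substantive difference.
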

\begin{proof}
	The property \ref{QS2} follows from  \Cref{lem:properties_completion}\ref{lem:properties_completion1}. 
	\ref{QS1} follows from \Cref{lem:properties_completion}\ref{lem:properties_completion2}, in the case $\epsilon=1$ also making use of part \ref{lem:properties_completion3}:
	\[q\left(\prod_{i\in I} a_i, -\prod_{i\in I} a_i\right) = \overline{M}_{I, I \triangle \{0\}} = \overline{M}_{I,I} \ast \overline{M}_{I, \{0\}} = 0.\]
	\ref{QS3} is another consequence of \Cref{lem:properties_completion}\ref{lem:properties_completion3}, applying the fact that $(\N_0,\ast)$ is boolean. 
	Lastly, \ref{QS4} follows from \ref{M3}. 
	
	The zeros in the completion of a quaternionic matrix, for fixed basis $\mathcal{A}$, clearly do not depend on the choice of the basis of the abstract $2$-Brauer group.
\end{proof}

\begin{definition}
	We call a prequaternionic matrix $M$ \emph{degenerate}, \emph{pythagorean}, etc. if the quaternionic structure $S(M)$ has the according property.
\end{definition}

\begin{corollary}\label{cor:isomorphic_qs_from_matrices}
	\begin{enumerate}[label=(\alph*)]
		\item\label{cor:isomorphic_qs_from_matrices1} 
		Let $M,M'\in \Mat_n(\N_0)$ be prequaternionic matrices of the same type such that $\overline{M}$ and $\overline{M'}$ have zeros in the same entries. 
		Then $S(M)\cong S(M')$ as quaternionic structures.
		\item\label{cor:isomorphic_qs_from_matrices2} 
		Let $\mathcal{A}$ be a basis of $S$ and $\mathcal{B}$ a basis of the abstract $2$-Brauer group $B(S)$. 
		Then $S\cong S(M_{\mathcal{A},\mathcal{B}}(S))$.
	\end{enumerate}
\end{corollary}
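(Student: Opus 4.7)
The plan is to exploit the fact that an isomorphism of quaternionic structures is detected purely by the pattern of zeros of $q$, so that neither the set $Q$ of quaternions nor the precise values taken by $q$ matter — only which pairs $(a,b)$ satisfy $q(a,b)=0$.

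For part \ref{cor:isomorphic_qs_from_matrices1}, I would take the underlying group $G$ with its basis $\mathcal{A}=(a_0,\ldots,a_{n-1})$ to be literally the same for both $S(M)$ and $S(M')$, as in \Cref{prop:prequaternionic_matrices}. Since $M$ and $M'$ have the same type $\varepsilon\in\{0,1\}$, the distinguished element $-1$ coincides in both structures (either $-1=1$ in both cases, or $-1=a_0$ in both cases). Hence the candidate isomorphism is $\phi=\mathrm{id}_G$, which automatically sends $-1$ to $-1'$. By construction of $S(M)$ and $S(M')$, the quaternion map satisfies
\[ q\!\left(\prod_{i\in I} a_i, \prod_{j\in J} a_j\right) = \overline{M}_{I,J}, \qquad q'\!\left(\prod_{i\in I} a_i, \prod_{j\in J} a_j\right) = \overline{M'}_{I,J}, \]
so the hypothesis that $\overline{M}$ and $\overline{M'}$ have zeros in the same entries yields exactly the required equivalence $q(a,b)=0 \iff q'(\phi(a),\phi(b))=0$ for all $a,b\in G$.

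For part \ref{cor:isomorphic_qs_from_matrices2}, I would reduce to part \ref{cor:isomorphic_qs_from_matrices1} via \Cref{lem:properties_qm}\ref{lem:properties_qm1}. Set $M:=M_{\mathcal{A},\mathcal{B}}(S)$; by \Cref{lem:properties_qm}\ref{lem:properties_qm2} this is a prequaternionic matrix whose type matches whether $-1=1$ in $S$. Identifying the underlying group of $S(M)$ with $G$ via the basis $\mathcal{A}$, the identity map once again sends $-1$ to $-1$. By \Cref{lem:properties_qm}\ref{lem:properties_qm1},
\[ \overline{M}_{I,J} \;=\; \phi_{\mathcal{B}}\!\left(q\!\left(\prod_{i\in I} a_i,\, \prod_{j\in J} a_j\right)\right), \]
and since $\phi_{\mathcal{B}}$ is a group isomorphism it sends $0$ to $0$ and no other element to $0$. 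Hence the zeros in $\overline{M}$ correspond precisely to the pairs on which $q_S$ vanishes, which by construction of $S(M)$ are precisely the pairs on which $q_{S(M)}$ vanishes. The zero-preservation criterion for isomorphism of quaternionic structures is therefore satisfied.

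There is no real obstacle: the only substantive observation is conceptual, namely that the notion of isomorphism introduced earlier makes the structure of $Q$ irrelevant beyond which elements are the zero quaternion. Both parts then follow by unwinding the definitions and applying \Cref{lem:properties_qm}\ref{lem:properties_qm1} in part \ref{cor:isomorphic_qs_from_matrices2}.
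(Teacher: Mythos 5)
Your proposal is correct and follows essentially the same route as the paper: identify the square class groups via the bases from \Cref{prop:prequaternionic_matrices}, note that equality of types forces the map to preserve $-1$, and conclude from the matching zero patterns (using \Cref{lem:properties_qm}\ref{lem:properties_qm1} and injectivity of $\phi_\mathcal{B}$ in part (b)) that zero quaternions are preserved in both directions. No gaps.
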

\begin{proof}
	\ref{cor:isomorphic_qs_from_matrices1}:
	Write $\mathcal{A}$ and $\mathcal{A}'$ for the basises of $S$ and $S'$ as in \Cref{prop:prequaternionic_matrices}. 
	It is clear that the basis transformation $\mathcal{A}\mapsto \mathcal{A}'$ defines an isomorphism of quaternionic structures as it preserves the $(-1)$-element ($M,M'$ are of the same type) and the zero quaternions.
	
	\ref{cor:isomorphic_qs_from_matrices2}:
	We can define an isomorphism of the square class groups by mapping $\mathcal{A}$ to the basis defined in the construction in \Cref{prop:prequaternionic_matrices}.
	Since $S$ and $S(M_{\mathcal{A},\mathcal{B}}(S))$ have the same type, this isomorphism preserves $-1$.
	By \Cref{lem:properties_qm}\ref{lem:properties_qm1} and by the construction of \Cref{prop:prequaternionic_matrices}, this isomorphism also preserves zero quaternions.
\end{proof}

At this point, we are at a good position to compute the normal quaternionic matrices of local type structures.
We leave some linear algebra details (which appear similarly in \cite[chapter 5, section 2 and 3]{Marshall_AbstractWittRings}) to the reader.

\begin{proposition}\label{prop:nqm_local_types}
	For $k\in \N$, $k\ge 2$, we have $M(\LL_{2k,0})=E_{2k}$,
	\[
	M(\LL_{2k,1})=\begin{pmatrix}
	0&0&\ldots&0&1\\
	0&&&&0\\
	\vdots&&E_{2k-2}&&\vdots\\
	0&&&&0\\
	1&0&\ldots&0&1
	\end{pmatrix},\ 
	M(\LL_{2k-1})=\begin{pmatrix}
		1&0&\ldots&0\\
		0&&&\\
		\vdots&&E_{2k-2}&\\
		0&&&
	\end{pmatrix}\]
	where $E_n\in \Mat_n(\N_0)$ is the matrix with entry $1$ on the anti-diagonal and entry $0$ else.
\end{proposition}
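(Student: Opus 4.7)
The plan is to exploit the fact, recalled in Remark~\ref{rem:local_types}, that a local type structure $S$ has exactly two quaternions, so its abstract $2$-Brauer group $B(S)$ is one-dimensional. The only basis $\mathcal{B}$ of $B(S)$ is thus the single nontrivial quaternion, which under $\phi_\mathcal{B}$ becomes $1\in \N_0$. Consequently $M_{\mathcal{A},\mathcal{B}}(S)$ is nothing but the Gram matrix of $q$, viewed as a nondegenerate symmetric $\F_2$-bilinear map on the square class group $G$, with respect to the basis $\mathcal{A}$, and $M_\mathcal{A}(S)=M_{\mathcal{A},\mathcal{B}}(S)$. The normal quaternionic matrix $M(S)$ is therefore just the lexicographically minimal Gram matrix of $q$ taken over all admissible bases $\mathcal{A}$ of $S$ (with $a_0=-1$ in the type-$1$ case).

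Before treating the three cases I record two structural observations. First, by the type condition \ref{M1}, the diagonal of any quaternionic matrix is either identically zero (type~$0$) or equal to the first row (type~$1$). Second, the top-left entry $M_{0,0}=q(-1,-1)$ acts as a level indicator: a short calculation with the binary isometry criterion shows $s(S)\le 2$ if and only if $q(-1,-1)=0$, since $\qf{1,1}\cong\qf{-1,-1}$ is equivalent to $q(-1,-1)=q(1,1)=0$. In view of the level values collected in Table~\ref{tbl_local_structures}, this forces $M_{0,0}=0$ for $\LL_{2k,0}$ and $\LL_{2k,1}$ and $M_{0,0}=1$ for $\LL_{2k-1}$.

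For $\LL_{2k,0}$ (type~$0$, so $q$ is alternating) the first row must be nonzero, else $a_0\in\rad(S)$ contradicts nondegeneracy; hence its lex-minimum is $(0,\ldots,0,1)$. Passing to the nondegenerate alternating form induced on $\{a_0,a_{2k-1}\}^\perp/\spn(a_0)$ and arguing inductively that each new row must place its leading $1$ at the latest position compatible with both nondegeneracy and the symmetry-imposed entries from earlier rows produces exactly the anti-diagonal pattern $E_{2k}$. For $\LL_{2k,1}$ the same first-row argument forces $(0,\ldots,0,1)$, and hence diagonal $(0,\ldots,0,1)$; the inner $(2k-2)\times(2k-2)$ block then has zero diagonal, is nondegenerate on $\spn(a_1,\ldots,a_{2k-2})$, and reduces to the $\LL_{2k,0}$-argument to give $E_{2k-2}$. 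For $\LL_{2k-1}$ the forced value $M_{0,0}=1$ makes the lex-min first row $(1,0,\ldots,0)$; again the inner block has zero diagonal and the same argument yields $E_{2k-2}$. Realizability of the claimed matrices follows from the classical existence of symplectic bases over $\F_2$, and the identification of $S(M)$ with the intended local type follows from Corollary~\ref{cor:isomorphic_qs_from_matrices} together with the uniqueness statements in Remark~\ref{rem:local_types}.

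The main obstacle will be making the inductive lex-min argument inside the alternating block fully rigorous: one needs to check that a row whose leading $1$ is placed strictly to the right of the claimed anti-diagonal position forces a nontrivial radical element (via XOR of that row with an earlier row sharing the same leading column), while a row whose leading $1$ is placed strictly to the left is either inconsistent with a symmetry-fixed entry from an earlier row or can be improved by a basis change that preserves all previously fixed rows, contradicting the assumed lex-minimality. The combinatorics is elementary but requires care to verify that every basis change invoked actually leaves the lex-earlier entries intact.
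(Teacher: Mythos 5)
Your proposal is correct and follows essentially the same route as the paper: reduce to Gram matrices of nondegenerate symmetric bilinear maps over $\F_2$ (since there are only two quaternions), use the type/level to pin down the diagonal and the entry $M_{0,0}$, split off the non-alternating part spanned by $-1$ (and a partner in the $\LL_{2k,1}$ case), and identify $E_n$ as the lexicographically minimal invertible symmetric zero-diagonal matrix. The paper likewise leaves that final minimality as a linear algebra exercise, so the residual gap you flag is no larger than the one the paper itself accepts.
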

\begin{proof}
    It was noted in \Cref{rem:local_types} that any nondegenerate symmetric bilinear map $G\times G\to \F_2$ uniquely defines a local type structure with square class group $G$.
    In \Cref{ex:qm_basic_examples} we saw that the quaternionic matrices are Gram matrices of these maps as we only have $2$ quaternions.
    Clearly, the matrices in the assertion above are invertible and symmetric, so they are Gram matrices of nondegenerate symmetric bilinear maps for some choice of basis of $G$. 
    The only limitation for quaternionic matrices in contrast to these Gram matrices is that we fix $-1$ as the first basis element of the square class group whenever $-1\neq 1$.
    Hence, for $\LL_{2k,0}$ with square class group $G$, the only local type structure with $-1=1$, the quaternionic matrices are exactly the Gram matrices of nondegenerate symmetric bilinear maps $q:G\times G\to \F_2$ such that $q(a,a)=0$ for all $a\in G$ since $-1=1$ implies that $q(a,a)=q(a,-1)=0$ by \Cref{lem:basic_properties_qs}.
    These are exactly the Gram matrices with zeros on the diagonal by \Cref{lem:properties_completion}\ref{lem:properties_completion2}.
    It is now an easy linear algebra exercise to check that $E_{2k}$ is the minimal symmetric and invertible $2k\times 2k$ matrix with entries from $\{0,1\}$, completing the proof for $\LL_{2k,0}$.
    
    In the other cases, we have $1\neq -1$, so we have to choose a basis $(-1,a_1,\ldots)$ of $S$.
    Writing $\{0,1\}$ for the set of quaternions, we do not have to distinguish between different choices for the basis of the abstract $2$-Brauer group as seen in \Cref{ex:qm_basic_examples}.
    
    For $\LL_{2k,1}=(G,-1,q)$, take $a_{2k-1}$ such that $q(-1,a_{2k-1})=1$.
    Since $q(-1,-1)=0$, the elements $-1$ and $a_{2k-1}$ span a $2$-dimensional subspace $H$ of $G$.
    By definition, the orthogonal complement $H^\perp$ of $H$ is in $V(-1)$, so $q(a,a)=q(a,-1)=0$ for all $a\in H^\perp$ by \Cref{lem:basic_properties_qs}\ref{lem:basic_properties_qs_2}.
    One readily checks that the restriction of $q$ to $H^\perp\times H^\perp$ is nondegenerate.
    Hence, we can choose a basis $(a_1,\ldots,a_{2k-2})$ of $H$ such that we get $E_{2k-2}$ as the Gram matrix of the restriction of $q$ to $H^\perp\times H^\perp$ in the same way as for $\LL_{2k,0}$.
    This shows that the asserted matrix is indeed a quaternionic matrix of $\LL_{2k,1}$.
    Since $\LL_{2k,1}$ is nondegenerate and thus its quaternionic matrices are invertible, it quickly follows that the given matrix is the minimal choice and thus the normal quaternionic matrix of $\LL_{2k,1}$.
    
    For $\LL_{2k-1}$ we can proceed very similarly. 
    Now, since $q(-1,-1)=1$, we can split off $\spn(-1)$ orthogonally.
\end{proof}

\begin{remark}\label{rem:necessity_linkage_matrices}
	At this point, we could simply assign a unique prequaternionic matrix to any quaternionic structure $S$ by choosing the lexicographically minimal one giving rise to a structure isomorphic to $S$ by the construction of \Cref{prop:prequaternionic_matrices}. 
	This would give a bijective mapping from the isomorphy types of quaternionic structures of order $n$ to a certain subset of $\Mat_n(\N_0)$. 
	As it turns out, this does not properly display the structure of the abstract $2$-Brauer group of $S$: In general, even if additionally requiring \ref{red}, there can be prequaternionic matrices of $S$ that cannot be written as the (reduced) quaternionic matrix of $S$ via a basis of $S$. 
	This is the case if there are relations of $\ast_{\N_0}$ appearing in the prequaternionic matrix that are not induced by linkage relations. 
	We illustrate this in the following \Cref{ex:linkage_matrices}.
\end{remark}

\begin{example}\label{ex:linkage_matrices}
	To make notation a bit clearer in this example, we use some coloring to point out which elements belong to which 'part' of the following structure:
	We consider the elementary type structure 
	\[(\LL_1\times (\textcolor{green}{\LL_1}\times \textcolor{blue}{\LL_1})\textcolor{red}{\Delta})\textcolor{orange}{\Delta}=:S=:(G,-1,q)\]
	with square class group $G=\{1,a_1\}\times \textcolor{green}{\{1,b_0\}}\times \textcolor{blue}{\{1,a_2\}}\times \textcolor{red}{\{1,a_3\}}\times \textcolor{orange}{\{1,a_4\}}$ according to construction and coloring. 
	Then $a_0:=-1=a_1\textcolor{green}{b_0}\textcolor{blue}{a_2}$. 
	We compute $M:=M_\mathcal{A}(S)$ for the basis $\mathcal{A}=(a_0,\ldots,a_4)$.
	We have $q(a_0,a)\not=0$ for all $a\in G\setminus\{1\}$ ($S$ is pythagorean), so all entries of the first row of $M$ must be independent, meaning that by \ref{red} along with \Cref{lem:rigidity_ets}\ref{lem:rigidity_ets1}, we have 
	the first row $(1\ 2\ 4\ 8\ 16)$ (a more general result on the structure of the first row can be found in \Cref{first row nqm} later on). 
	Furthermore, by the properties of the direct product $\LL_1\times (\textcolor{green}{\LL_1}\times\textcolor{blue}{\LL_1})\textcolor{red}{\Delta}$, we get $q(a_1,\textcolor{blue}{a_2})=0=q(a_1,\textcolor{red}{a_3})$. 
	Applying \ref{M1} and \ref{M2} there are $4$ more values to compute:
	\[M=\begin{pmatrix} 1&2&4&8&16\\ 2&2&0&0&x_1\\4&0&4&x_2&x_3\\8&0&x_2&8&x_4\\16&x_1&x_3&x_4&16\end{pmatrix}\]
	Iteratively applying \Cref{prop:2B_of_product_and_extension} for the abstract $2$-Brauer group, it follows that 
	\[\text{Qdim}(S)=1+((\textcolor{green}{1}+\textcolor{blue}{1})+\textcolor{red}{2})+\textcolor{orange}{4}=9.\]
	Hence, we must have $x_1=2^5$, $x_2=2^6$, $x_3=2^7$, $x_4=2^8$ and thus
	\[M=\begin{pmatrix} 1&2&4&8&16\\ 2&2&0&0&32\\4&0&4&64&128\\8&0&64&8&256\\16&32&128&256&16\end{pmatrix}.\]
	It is clear from the uniqueness of the first row and the position of the zero entries that this is the lexicographically minimal quaternionic matrix of $S$ via a basis of $S$. 
	However, it can be verified with an elementary yet lengthy computation that the matrix 
	\[M':=\begin{pmatrix} 1&2&4&8&16\\ 2&2&0&0&32\\4&0&4&33&64\\8&0&33&8&128\\16&32&64&128&16\end{pmatrix}\]
	also satisfies \ref{red}, $M'<_{\text{lex}}M$ and we have $\overline{M}_{I,J}=0\iff \overline{M'}_{I,J}=0$ for all $I,J\subset \{0,\ldots, 4\}$. 
	Hence, by \Cref{cor:isomorphic_qs_from_matrices}, we have $S\cong S(M')$. 
	
	Here, the 'problem' lies in the fact that $33=32\ast 1$ is no relation of the abstract $2$-Brauer group of $S$ where the according values are independent, as $32\ast 1=33$ is not induced by linkage relations.
\end{example}

As illustrated by the previous example, it is also necessary to include the linkage relations in a sensible definition of quaternionic matrices.

\begin{lemma}\label{lem:pqm_dimension_Brauer}
	Let $M\in \Mat_n(\N_0)$ be a prequaternionic matrix of type $\epsilon\in \{0,1\}$.
	Then $\text{Qdim}(S(M))\ge \dim(\spn_{\ast_{\N_0}}(M_{i,j}\mid 0\le i,j\le n-1))$. 
\end{lemma}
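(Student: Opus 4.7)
The plan is to construct a group homomorphism $\psi \colon B(S(M)) \to (\N_0, \ast_{\N_0})$ whose image contains every $M_{i,j}$, since then its image must contain the entire span $V := \spn_{\ast_{\N_0}}(M_{i,j} \mid 0 \le i,j \le n-1)$, and $\dim(V) \le \dim(\text{image}(\psi)) \le \dim(B(S(M))) = \text{Qdim}(S(M))$.

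Recall that by \Cref{prop:prequaternionic_matrices}, the set of quaternions $Q$ of $S(M)$ consists of the entries of $\overline{M}$, viewed as a subset of $\N_0$, and that for $I, J \subset \{0,\ldots,n-1\}$ one has $q(\prod_{i \in I} a_i, \prod_{j \in J} a_j) = \overline{M}_{I,J}$. I would define $\psi$ on the generators by sending each quaternion $\overline{M}_{I,J} \in Q$ to itself, viewed as an element of the boolean group $(\N_0, \ast_{\N_0})$.

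The only thing to check is that $\psi$ respects the linkage relation \eqref{eq:Linkage_Relation}. Writing $a = \prod_{i \in I} a_i$, $b = \prod_{j \in J} a_j$, $c = \prod_{k \in K} a_k$, the relation $q(a,b) \ast_S q(a,c) = q(a,bc)$ translates under $\psi$ into the identity
\[\overline{M}_{I,J} \ast_{\N_0} \overline{M}_{I,K} = \overline{M}_{I, J \triangle K},\]
which is precisely \Cref{lem:properties_completion}\ref{lem:properties_completion3}. Hence $\psi$ extends to a well-defined group homomorphism from the free abelian group on $Q$ modulo \eqref{eq:Linkage_Relation}, i.e. from $B(S(M))$, to $(\N_0, \ast_{\N_0})$.

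By construction $\psi(q(a_i, a_j)) = M_{i,j}$ for all $i,j$, so the image of $\psi$ contains (and in fact equals) $V$, which gives the asserted bound. The only potentially subtle point is the well-definedness of $\psi$, but as noted this reduces immediately to the single identity in \Cref{lem:properties_completion}\ref{lem:properties_completion3}, so the argument is essentially a one-line verification once the right map is written down.
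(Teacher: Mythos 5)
Your proposal is correct and follows essentially the same route as the paper: both construct the homomorphism from $B(S(M))$ to the subgroup of $(\N_0,\ast_{\N_0})$ spanned by the entries of $M$ by sending $q(a_i,a_j)$ to $M_{i,j}$, with well-definedness reduced to the identity $\overline{M}_{I,J}\ast\overline{M}_{I,K}=\overline{M}_{I,J\triangle K}$ of \Cref{lem:properties_completion}. The only cosmetic difference is that the paper phrases the target as the subgroup $B'$ generated by the entries and cites \ref{M3} where you (more precisely) cite \Cref{lem:properties_completion}\ref{lem:properties_completion3}.
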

\begin{proof}
	Write $S(M)=(G_M,-1,q_M)$ with basis $(a_0,\ldots,a_{n-1})$ as in \Cref{prop:prequaternionic_matrices}.
	Also write $B$ for the abstract $2$-Brauer group of $S(M)$ and $B'$ for the subgroup of $(\N_0,\ast_{\N_0})$ generated by the entries of $M$.
	
	$B$ is generated by the $q_M(a_i,a_j)=M_{i,j}$ with linkage relations \ref{eq:Linkage_Relation} as its sole relations.
	As the entries of $M$ resp. $\overline{M}$ satisfy these relations by \ref{M3} regarding the quaternionic product on $\N_0$, we get a well defined surjective homomorphism $B\to B'$ by mapping $q_M(a_i,a_j)$ to $M_{i,j}$ for all $i,j$.
	The assertion follows since $\text{Qdim}(S(M))=\dim(B)$ by definition.
\end{proof}

\begin{definition}
	Let $M\in \Mat_n(\N_0)$ be a (reduced) prequaternionic matrix of type $\epsilon\in \{0,1\}$.
	We say that $M$ is a \emph{(reduced) quaternionic matrix of type $\epsilon$} if we have equality in \Cref{lem:pqm_dimension_Brauer}.
\end{definition}

\begin{remark}\label{rem:quaternionic_matrices_equivalences}
	In light of \Cref{prop:prequaternionic_matrices}, a prequaternionic matrix $M$ defines a quaternionic structure $S(M)$ with quaternions $M_{i,j}\in \N_0$ which generate it abstract $2$-Brauer group by definition.
	Taking into account \Cref{lem:pqm_dimension_Brauer}, all relations satisfied by the $M_{i,j}$ via the quaternionic product of $S(M)$ are also satisfied regarding $\ast_{\N_0}$.
	Hence, by definition, $M$ is a quaternionic matrix if and only we can identify the quaternionic product of $S(M)$ with $\ast_{\N_0}$.
	This 'justifies' calling $\ast_{\N_0}$ the \emph{quaternionic product} whenever we have a quaternionic matrix at hand. 
\end{remark}

\begin{lemma}\label{brauer group in matrix}
	If $M\in \Mat_n(\N_0)$ is a reduced quaternionic matrix and $S(M)$ is defined as in \Cref{prop:prequaternionic_matrices}, then the different $2$-powers appearing as entries of $M$ form a basis of the abstract $2$-Brauer group of $S(M)$. 
	In particular, if $M$ has a nonzero entry, then
	\[\text{Qdim}(S(M))=\Max\{k+1\mid \exists\ i,j\in \{0,\ldots,n-1\}:\ M_{i,j}=2^k\}.\]
\end{lemma}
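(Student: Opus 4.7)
The plan is to enumerate the entries of $M$ as $e_1, e_2, \ldots$ in lexicographic order, set $V_t := \spn(e_1, \ldots, e_t)$ (with respect to $\ast_{\N_0}$) and $d_t := \dim V_t$, and write $L_k := \{0, 1, \ldots, 2^k - 1\}$ for the subgroup of $(\N_0, \ast_{\N_0})$ with basis $\{2^0, \ldots, 2^{k-1}\}$. The heart of the argument is to show by induction on $t$ that $V_t = L_{d_t}$ at every stage, so that the dimension only ever climbs by adjoining a genuine new $2$-power. The base case $V_0 = \{0\} = L_0$ is trivial.

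The inductive step is where the reducedness condition \ref{red} is exactly the right hypothesis. Assume $V_t = L_{d_t}$, and let $2^{k_0}$ be the bound furnished by \ref{red}, namely the smallest $2$-power strictly larger than every $e_s$ for $s \leq t$. On the one hand, $V_t \subseteq L_{k_0}$ forces $d_t \leq k_0$; on the other, $V_t = L_{d_t}$ has maximum $2^{d_t} - 1 < 2^{d_t}$, forcing $k_0 \leq d_t$. So $k_0 = d_t$, and \ref{red} yields $e_{t+1} \leq 2^{d_t}$. Either $e_{t+1} \in L_{d_t} = V_t$, in which case no dimension is added, or $e_{t+1} = 2^{d_t}$, in which case $V_{t+1} = L_{d_t} \oplus \spn(2^{d_t}) = L_{d_t + 1}$; in both cases $V_{t+1} = L_{d_{t+1}}$.

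Since $M$ is assumed to be a quaternionic (not merely prequaternionic) matrix, equality holds in \Cref{lem:pqm_dimension_Brauer}, so $m := \text{Qdim}(S(M))$ equals $\dim V_\infty = \dim L_m$. The induction shows that each of $2^0, 2^1, \ldots, 2^{m-1}$ must appear as an entry of $M$, namely at the positions where the dimension jumps; conversely, every $2$-power appearing as an entry lies in $L_m$ and therefore belongs to $\{2^0, \ldots, 2^{m-1}\}$. Since distinct $2$-powers are $\F_2$-linearly independent in $(\N_0, \ast_{\N_0})$, this set is a basis of $L_m$, which via the quaternionic-matrix identification is the abstract $2$-Brauer group $B(S(M))$. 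If $M$ has a nonzero entry, then $m \geq 1$ and the largest index $k$ with $2^k$ appearing among entries is $m-1$, yielding the stated formula $\text{Qdim}(S(M)) = (m-1) + 1$.

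The main obstacle is noticing that reducedness \ref{red} is calibrated precisely so that the spans $V_t$ always coincide with the prefix subgroups $L_{d_t}$. Without this matching the next entry could a priori be an arbitrary element of $L_{k_0}$ that enlarges the dimension without contributing a new $2$-power, and the statement would fail; the quaternionic-matrix hypothesis is equally essential at the last step, as otherwise one only gets a basis of the XOR-span rather than of $B(S(M))$.
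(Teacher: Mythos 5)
Your proof is correct and follows essentially the same route as the paper: the paper's proof simply asserts, as a direct consequence of \ref{red}, that the entries $1,2,\ldots,2^{m-1}$ all occur and that every entry lies in $\{0,\ldots,2^m-1\}$, and then invokes the quaternionic-matrix condition (equality in \Cref{lem:pqm_dimension_Brauer}) to identify this span with $B(S(M))$. Your lexicographic induction showing $V_t=L_{d_t}$ is just a careful unpacking of that first assertion, so the two arguments coincide in substance.
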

\begin{proof}
	By \ref{red}, we have $m\in \N$ such that $\mathcal{B}:=(1,2,\ldots,2^{m-1})$ consists of entries of $M$ and all entries of $\overline{M}$ are in $\{0,\ldots,2^m-1\}$. 
	Hence, $\mathcal{B}$ is a basis of $\spn_{\ast_{\N_0}}(M_{i,j}\mid 0\le i,j\le n-1)$ and thus of the abstract $2$-Brauer group of $S(M)$ by the previous results. 
\end{proof}

\begin{definition}
	We call two quaternionic matrices of the same type \emph{isomorphic} if they define isomorphic quaternionic structures in the sense of \Cref{prop:prequaternionic_matrices}. 
	Let $M$ be a quaternionic matrix of type $\epsilon\in \{0,1\}$. We say that a quaternionic matrix $M$ is a \emph{normal quaternionic matrix} if for every quaternionic matrix $M'$ isomorphic to $M$, we have $M\le_{\text{lex}} M'$. 
\end{definition}

By this definition, we immediately get the following:

\begin{corollary}\label{cor:nqm_unique}
	Let $\epsilon\in \{0,1\}$. For any $n\in \N$, we have a bijection between the set of normal quaternionic matrices of type $\epsilon$ in $\Mat_n(\N_0)$ and the isomorphy types of quaternionic structures $S$ of order $n$ with $-1=1$ (if $\epsilon=0$) or $-1\not=1$ (if $\epsilon=1$), mapping $M$ to $S(M)$. 
\end{corollary}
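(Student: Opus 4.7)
The plan is to verify the two directions separately, using essentially the machinery already assembled in the previous lemmas. The only nontrivial point is making sure that the quaternionic matrices arising from a given structure $S$ are actually quaternionic matrices (and not merely prequaternionic), so that the notion of ``isomorphism'' in the definition of normality applies.

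First I would check that the map $M \mapsto [S(M)]$ is well-defined. This is immediate from \Cref{prop:prequaternionic_matrices}: a prequaternionic matrix $M \in \Mat_n(\N_0)$ of type $\epsilon$ yields a quaternionic structure $S(M)$ of order $n$ with $-1=1$ iff $\epsilon=0$.

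For surjectivity, take $S=(G,-1,q)$ of order $n$ and of the appropriate type. Choose any basis $\mathcal{A}$ of $S$ and any basis $\mathcal{B}$ of the abstract $2$-Brauer group $B(S)$. By \Cref{lem:properties_qm}\ref{lem:properties_qm2}, $M_{\mathcal{A},\mathcal{B}}(S)$ is a prequaternionic matrix (of the correct type). I need to upgrade this to a quaternionic matrix. The isomorphism $\phi_\mathcal{B}:(B(S),\ast)\to(\{0,\ldots,2^m-1\},\ast_{\N_0})$ with $m=\text{Qdim}(S)$ identifies the span of the entries of $M_{\mathcal{A},\mathcal{B}}(S)$ under $\ast_{\N_0}$ with the image of $B(S)$, so that span has dimension exactly $m$. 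By \Cref{cor:isomorphic_qs_from_matrices}\ref{cor:isomorphic_qs_from_matrices2} we have $S\cong S(M_{\mathcal{A},\mathcal{B}}(S))$, hence $\text{Qdim}(S(M_{\mathcal{A},\mathcal{B}}(S))) = m$, yielding equality in \Cref{lem:pqm_dimension_Brauer}. Thus $M_{\mathcal{A},\mathcal{B}}(S)$ is genuinely a quaternionic matrix. Its isomorphism class (among quaternionic matrices of the same type in $\Mat_n(\N_0)$) is finite and nonempty, so the lexicographic minimum exists; this minimum is by construction a normal quaternionic matrix mapping to $[S]$.

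For injectivity, suppose $M,M'$ are normal quaternionic matrices of type $\epsilon$ with $S(M)\cong S(M')$. By the definition preceding the corollary, $M$ and $M'$ are then isomorphic as quaternionic matrices. Since $M$ is normal, $M \le_{\text{lex}} M'$; by symmetry $M' \le_{\text{lex}} M$; hence $M = M'$.

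The only real pitfall is the subtle point highlighted in \Cref{rem:necessity_linkage_matrices} and \Cref{ex:linkage_matrices}: a priori, the lexicographic minimum over the class of \emph{prequaternionic} matrices realizing a given $S$ may differ from the minimum over the class of \emph{quaternionic} matrices, because spurious relations of $\ast_{\N_0}$ not induced by linkage can shrink the matrix further. So the step that matters is precisely the verification that $M_{\mathcal{A},\mathcal{B}}(S)$ lies in the correct (smaller) class, which is what the $\text{Qdim}$ computation above guarantees; once this is in hand, both directions follow immediately from the definition of ``normal'' as a class minimum.
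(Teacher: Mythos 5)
Your proof is correct and follows essentially the same route as the paper, which states the corollary without proof as an immediate consequence of the definition of normality; your explicit verification that $M_{\mathcal{A},\mathcal{B}}(S)$ attains equality in \Cref{lem:pqm_dimension_Brauer} (so that it is a genuine quaternionic matrix rather than merely a prequaternionic one, which is what makes the notion of normality applicable) is precisely the point the paper leaves implicit, and your injectivity argument via antisymmetry of $\le_{\text{lex}}$ is the intended one. One small inaccuracy: the isomorphism class of a quaternionic matrix is not finite in general (for instance, $(k)\in\Mat_1(\N_0)$ is a quaternionic matrix of type $1$ isomorphic to $(1)$ for every $k\ge 1$), but the lexicographic minimum you need still exists, either because $\le_{\text{lex}}$ on $\Mat_n(\N_0)$ is a well-order, or because by \Cref{lem:reduced_qm}\ref{lem:reduced_qm2} a normal quaternionic matrix is necessarily reduced and the reduced quaternionic matrices in a fixed isomorphism class form a finite nonempty set.
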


Restricting ourselves to nondegenerate quaternionic structures, we can handle both types simultaneously by \ref{M1}.

\begin{corollary}\label{cor:nondegenerate_nqm_unique}
	For any $n\in \N$, we have a bijection between the set of nondegenerate normal quaternionic matrices in $\Mat_n(\N_0)$ and the isomorphy types of nondegenerate quaternionic structures of order $n$.
\end{corollary}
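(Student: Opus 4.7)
The plan is to obtain the claimed bijection by combining the two type-specific bijections supplied by \Cref{cor:nqm_unique} after verifying that, for nondegenerate prequaternionic matrices, the two cases of \ref{M1} are mutually exclusive. Since by the naming convention introduced just after \Cref{prop:nqm_local_types} a prequaternionic matrix $M$ is called nondegenerate precisely when $S(M)$ is, \Cref{cor:nqm_unique} automatically restricts, for each $\epsilon \in \{0,1\}$, to a bijection between nondegenerate normal quaternionic matrices of type $\epsilon$ and isomorphism types of nondegenerate quaternionic structures of order $n$ with the corresponding behaviour of $-1$.

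The only obstacle is to rule out double-counting on the matrix side: a priori, a prequaternionic matrix could satisfy both conditions of \ref{M1} simultaneously. I would handle this as follows. Suppose $M \in \Mat_n(\N_0)$ is a nondegenerate prequaternionic matrix for which both conditions of \ref{M1} hold. Then $0 = M_{i,i} = M_{0,i}$ for every $i \in \{0, \ldots, n-1\}$, and by \ref{M2} the zeroth row and column of $M$ vanish. Writing $S(M) = (G, -1, q)$ with basis $(a_0, \ldots, a_{n-1})$ as in \Cref{prop:prequaternionic_matrices}, the definition of the completion gives $\overline{M}_{\{0\}, J} = \Ast_{j \in J} M_{0,j} = 0$ for every $J \subseteq \{0, \ldots, n-1\}$. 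Since the elements $\prod_{j \in J} a_j$ exhaust $G$, this means $q(a_0, x) = 0$ for all $x \in G$, so $a_0 \in \rad(S(M))$; but $a_0 \neq 1$ as it is a basis element of $G$, contradicting nondegeneracy.

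Consequently, the set of nondegenerate normal quaternionic matrices in $\Mat_n(\N_0)$ decomposes as a disjoint union indexed by the two types, matching the disjoint decomposition of nondegenerate quaternionic structures of order $n$ according to whether $-1 = 1$ or $-1 \neq 1$. Taking the union of the two bijections from \Cref{cor:nqm_unique} then produces the desired bijection $M \mapsto S(M)$ and completes the argument. The main (and only) non-routine point is the radical argument above; the rest is a direct packaging of the previous corollary.
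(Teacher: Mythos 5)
Your proposal is correct and follows essentially the same route as the paper: the paper's proof likewise observes that a nondegenerate quaternionic matrix must have a nonzero entry in its first row (your radical argument is exactly the reason), so the two cases of \ref{M1} cannot hold simultaneously, the type is uniquely determined, and the bijection follows from \Cref{cor:nqm_unique}. Your write-up merely spells out the radical computation in more detail than the paper does.
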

\begin{proof}
	Clearly, if $M$ is a nondegenerate quaternionic matrix, then the first row of $M$ has a nonzero entry. 
	Thus, $M$ cannot satisfy both conditions in \ref{M1} simultaneously, and the type of $M$ is uniquely determined.
\end{proof}

In the following, we always write $M_S$ for the normal quaternionic matrix of a quaternionic structure $S$. 
This notation is justified by \Cref{cor:nqm_unique}.

\begin{definition}
	Let $M\in \Mat_n(\N_0)$ be a quaternionic matrix of type $\epsilon\in \{0,1\}$. 
	We say that $M'\in \Mat_n(\N_0)$ is a \emph{simple transformation} of $M$ if there are $i,j\in \{0,\ldots,n-1\}$, $i\not=j$ such that
	\begin{itemize}
		\item We have $\epsilon=0$ or $i,j\not=0$ and:\\
		$M'$ is obtained from $M$ by swapping the $i$-th row of $M$ with the $j$-th row and the $i$-th column with the $j$-th column
		\item We have $\epsilon=0$ or $i\not=0$ and:\\
		$M'_{i,k}=M_{i,k}\ast M_{j,k}=M'_{k,i}$ for all $k\not=i$, $M'_{i,i}=M_{i,i}\ast M_{j,j}$ and $M'_{k,\ell}=M_{k,\ell}$ for all $k,\ell\in \{0,\ldots,n-1\}\setminus\{i\}$
	\end{itemize}
	The simple transformations above will briefly be denoted as $(i \leftrightarrow j)$ and $(i\mapsto i\ast j)$ respectively.
	We say that $M'$ is a \emph{transformation} of $M$ if it can be obtained from $M$ using a finite number of simple transformations.
\end{definition}

The simple transformations display basis transformations of quaternionic structures in the following way: Take a quaternionic structure $S=(G,-1,q)$ with basis $\mathcal{A}=(a_0,\ldots,a_{n-1})$ and basis $\mathcal{B}$ of the abstract $2$-Brauer group. 
As $G$ is an $\F_2$ vector space, we cannot scale $\mathcal{A}$ nontrivially, so if $\mathcal{A}'$ is another basis of $S$, we can obtain it from $\mathcal{A}$ by an iteration of swapping elements $a_i \leftrightarrow a_j$ or replacing $a_i$ by $a_ia_j$. 
The following lemma shows that the matrix transformations defined above translate the effects of these basis transformations to the quaternionic matrices, going from $M_{\mathcal{A},\mathcal{B}}$ to $M_{\mathcal{A}',\mathcal{B}}$ by an iteration of simple transformations. 
For the canonical choice of $\mathcal{B}$ (cf. \Cref{lem:properties_qm}\ref{lem:properties_qm3}) in this setting, we have to consider reduced quaternionic matrices:

\begin{lemma}\label{lem:reduced_qm}
	Let $M, M'\in \Mat_n(\N_0)$ be quaternionic matrices of the same type. 
	The following hold:
	\begin{enumerate}[label=(\alph*)]
		\item\label{lem:reduced_qm1} 
		There is a unique reduced quaternionic matrix $M_\text{red}$ such that:
		\[\forall I,J\subset \{0,\ldots,n-1\}:\ \overline{M}_{I,J}=0 \iff \overline{M_\text{red}}_{I,J}=0\]
		\item\label{lem:reduced_qm2} If $M$ is a normal quaternionic matrix, then $M$ is a reduced quaternionic matrix.
		\item\label{lem:reduced_qm3} 
		$M$ and $M'$ are isomorphic if and only if there is a transformation $M''$ of $M$ such that $M''_\text{red}=M'_\text{red}$.
	\end{enumerate}
\end{lemma}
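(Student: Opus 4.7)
The plan is to address the three parts in order, with Part (a) providing the uniqueness statement on which Parts (b) and (c) rely. First, for existence in (a), I would apply Lemma \ref{lem:properties_qm}\ref{lem:properties_qm3} to the quaternionic structure $S(M)$ with its canonical basis $\mathcal{A}$ from Proposition \ref{prop:prequaternionic_matrices}: choosing the basis $\mathcal{B}$ of the abstract $2$-Brauer group greedily from the $q(a_i,a_j)$ as in that lemma produces a reduced quaternionic matrix, and its completion has the same zero pattern as $\overline{M}$ because this pattern is dictated by which quaternions $q(\prod_{i\in I}a_i,\prod_{j\in J}a_j)$ vanish, an invariant of $S(M)$. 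For uniqueness, I would argue inductively along the lex ordering: at each position $(i,j)$, the zero pattern fixes whether the entry is $0$, and condition \ref{red} together with the linkage relations force each nonzero value to be either the next unused power of $2$ (if linearly independent of the preceding entries in the $2$-Brauer group) or the unique value determined by prior entries. The crucial point that has to be invoked here is that $M_\text{red}$ is a \emph{quaternionic} matrix rather than merely prequaternionic, so by Remark \ref{rem:quaternionic_matrices_equivalences} the quaternionic product of $S(M_\text{red})$ is faithfully represented by $\ast_{\N_0}$, ruling out the spurious coincidences illustrated in Example \ref{ex:linkage_matrices}.

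Part (b) should be almost immediate from the definitions: changing only the basis $\mathcal{B}$ of the $2$-Brauer group leaves $S(M)$ unchanged, hence produces a quaternionic matrix isomorphic to $M$. If $M$ is lex-minimal among all matrices isomorphic to it, then in particular it is lex-minimal when only $\mathcal{B}$ varies with $\mathcal{A}$ held fixed, which is exactly the reduced condition.

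For Part (c), the ``$\Leftarrow$'' direction will follow from the interpretation of simple transformations as elementary basis changes of the square class group of $S(M)$: the swap $(i\leftrightarrow j)$ corresponds to exchanging $a_i\leftrightarrow a_j$, and $(i\mapsto i\ast j)$ to replacing $a_i$ by $a_ia_j$; both operations preserve the isomorphism class of $S(M)$, and reduction does so as well, giving $M\cong M''\cong M''_\text{red}=M'_\text{red}\cong M'$. The ``$\Rightarrow$'' direction is the main obstacle. Given an isomorphism $\phi\colon S(M)\to S(M')$, I would use it to compare the two canonical bases: the image $\phi(\mathcal{A}^M)$ is a basis of $S(M')$ and preserves the distinguished $-1$ in type $1$, and since the square class group is an $\F_2$-vector space, it can be transformed into $\mathcal{A}^{M'}$ by a finite sequence of swaps and additions respecting the restriction that $a_0'=-1$ is left untouched in type $1$ --- precisely the restrictions built into the definition of simple transformations. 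Translating this sequence back into simple transformations applied to $M$ produces a matrix $M''$ which, via $\phi$, encodes $S(M')$ with respect to $\mathcal{A}^{M'}$, so $\overline{M''}$ and $\overline{M'}$ share the same zero pattern, and Part (a) then delivers $M''_\text{red}=M'_\text{red}$.
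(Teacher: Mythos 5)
Your proposal is correct and follows essentially the same route as the paper: part (a) by re-coordinatizing the entries via a greedily chosen basis of the abstract $2$-Brauer group as in \Cref{lem:properties_qm}\ref{lem:properties_qm3} (with uniqueness coming from the fact that, for a genuine quaternionic matrix, the zero pattern of the completion determines all coincidences among entries), part (b) from $M_\text{red}\le_\text{lex} M$ together with $S(M)\cong S(M_\text{red})$, and part (c) by interpreting simple transformations as basis changes fixing $-1$ and, for the converse, pulling back the basis of $S(M')$ through the isomorphism and invoking (a). No substantive differences from the paper's argument.
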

\begin{proof}
	The entries of $M$ span the abstract $2$-Brauer group $B$ by definition. 
	Set $m:=\dim(B)$.
	We then have $B=\{0,\ldots,2^{m-1}-1\}$ and this is a subgroup of $(\N_0,\ast_{\N_0})$.
	
	\ref{lem:reduced_qm1}:
	We can choose a basis $\mathcal{B}:=(M_{i_0,j_0},\ldots,M_{i_{m-1},j_{m-1}})$ of $B$ in the same way as in \Cref{lem:properties_qm}\ref{lem:properties_qm3}. 
	Then, the group isomorphism 
	$B\to \{0,\ldots,2^m-1\},\ \mathcal{B}\mapsto (1,2,\ldots,2^{m-1})$
	applied on the entries of $M$ defines a matrix $M_\text{red}$. 
	Clearly, for any $I,J,K,L\subset \{0,\ldots,n-1\}$, we have 
	\[\overline{M}_{I,J}=\overline{M}_{K,L}\iff \overline{M_\text{red}}_{I,J}=\overline{M_\text{red}}_{K,L}.\]
	This immediately shows that $M_\text{red}$ is a prequaternionic matrix. 
	The axiom \ref{red} follows as in the proof of \Cref{lem:properties_qm}\ref{lem:properties_qm3}. 
	Since the zero entries in the completions are the same, we have $S(M)\cong S(M_\text{red})$ and in particular $\text{Qdim}(S(M_\text{red})=m$ which then shows that $M_\text{red}$ is a reduced quaternionic matrix. 
	The uniqueness follows in the same way as in \Cref{lem:properties_qm}\ref{lem:properties_qm3} again.
	
	\ref{lem:reduced_qm2}:
	This follows immediately from \ref{lem:reduced_qm1} since by \Cref{cor:isomorphic_qs_from_matrices}\ref{cor:isomorphic_qs_from_matrices1}, we get $S(M)\cong S(M_\text{red})$ and clearly $M_\text{red}\le_\text{lex} M$ (cf. \Cref{lem:properties_qm}\ref{lem:properties_qm3}.
	
	\ref{lem:reduced_qm3}:
	First, we point out that applying simple transformations $(i\leftrightarrow j)$ or $(i\mapsto i\ast j)$ just means swapping the $i$-th and $j$-th basis element in the induced basis of $S(M)$ or replacing $a_i$ by $a_ia_j$. 
	Both are basis transformations of $S(M)$ as by the choice of $i,j$, the element $-1$ is fixed. 
	In particular, simple transformations and by iteration also general transformations of $M$ are again quaternionic matrices of the same type that are isomorphic to $M$. 
	By \ref{lem:reduced_qm1}, reduction of quaternionic matrices also does not change the isomorphy type, so the 'if' part of the statement holds.
	
	Conversely, assume $S:=S(M)=(G,-1,q)$, $S':=S(M')=(G',-1',q')$ are isomorphic with bases $\mathcal{A}=(a_0,\ldots,a_{n-1})$ and $\mathcal{A}'$ as in \Cref{prop:prequaternionic_matrices}. 
	Consider the isomorphism $\phi:G\to G'$ of $S$ and $S'$ and the preimages $a_i'':=\phi^{-1}(a_i')$ for $i=0,\ldots,n-1$. They form a basis $\mathcal{A}'':=(a_0'',\ldots,a_{n-1}'')$ of $S$.
	Hence, we can obtain $\mathcal{A}''$ from $\mathcal{A}$ by a finite number of elementary basis transformations. 
	Write $M:=M_{\mathcal{A},\mathcal{B}}(S)$ for a basis $\mathcal{B}$ of the abstract $2$-Brauer group of $S$. 
	Then $M'':=M_{\mathcal{A}'',\mathcal{B}}$ is a transformation of $M$. 
	Since $\phi$ is an isomorphism mapping $\mathcal{A}''$ to $\mathcal{A}'$, the completions $\overline{M''}$ and $\overline{M'}$ have zeros at the same entries as in \Cref{prop:prequaternionic_matrices}.
	By \ref{lem:reduced_qm1}, this shows that $M''_\text{red}=M'_\text{red}$.
\end{proof}

The previous statement points out that in order to compute the normal quaternionic matrix of a given quaternionic matrix $M$, we have to compute the lexicographic minimum of all reduced (in the sense of \Cref{lem:reduced_qm}\ref{lem:reduced_qm1}) transformations of $M$.

\section{Normal Quaternionic Matrices}\label{sec_NQM}
The main result of this section will be determining what the first two rows of normal quaternionic matrices look like. 
As already seen (for example in \Cref{prop:nqm_local_types}), choosing $-1$ as the first element of any basis of a quaternionic structure with $-1\neq 1$ clearly impacts the structure of the first row.

\begin{proposition}\label{first row nqm}
Let $M=(M_{i,j})_{0\le i,j\le n-1}$ be a normal quaternionic matrix of type $\epsilon\in \{0,1\}$ with quaternionic structure $S:=S(M):=(G,-1,q)$. 
Set $x:=\dim(V(-1))$ if $-1\not=1\in G$ and $x:=\Max\{\dim(V(a))\mid a\in G\setminus \{1\}\}$ otherwise. 
We write $s:=s(S)$. 
Then
\[(M_{0,0},M_{0,1},\ldots,M_{0,n-1})=\begin{cases} (0,\ldots,0,1,2,4,\ldots,2^{n-x-1}), &\text{ if } s\le 2 \\ (1,0,\ldots,0,2,4,\ldots,2^{n-x-1}), &\text{ else}.\end{cases}\]
\end{proposition}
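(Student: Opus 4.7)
The plan is to minimize the first row of $M$ lex-wise over all choices of basis $\mathcal{A}$ of $S$ and basis $\mathcal{B}$ of $B(S)$, exploiting the axiom \ref{M1}, the reducedness condition \ref{red}, and the subspace structure of $V(-1)$. The argument splits naturally into three cases: $\epsilon=0$ (where necessarily $s=1$), $\epsilon=1$ with $s\le 2$, and $\epsilon=1$ with $s\ge 3$.

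First I would pin down $M_{0,0}$. If $\epsilon=0$, axiom \ref{M1} (type 0) forces $M_{0,0}=0$. If $\epsilon=1$, then $a_0=-1$, so $M_{0,0}=q(-1,-1)$; since $q(-1,-1)=0$ is equivalent to $-1\in V(-1)=D_S(\qf{1,1})$, i.e.\ to $s(S)\le 2$, this gives $M_{0,0}=0$ exactly when $s\le 2$. When $s\ge 3$, $M_{0,0}$ is nonzero, and applying \ref{red} at position $(0,0)$ forces $M_{0,0}\le 1$, hence $M_{0,0}=1$.

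Next I would count and place the zeros in the first row. By Remark~\ref{rem:Pfister_forms_and_fundamental_ideal}, the value group $V(a_0)$ (respectively $V(-1)$) is an $\F_2$-subspace of $G$ of dimension $x$. In the $\epsilon=0$ case I would first pick $a_0$ realising the maximum $\dim V(a_0)=x$, then extend $(a_0)$ to a basis $(a_0,\dots,a_{x-1})$ of $V(a_0)$ and on to a basis of $G$. In the case $\epsilon=1,\ s\le 2$ the relation $-1\in V(-1)$ allows the analogous extension starting from $a_0=-1$. In the case $\epsilon=1,\ s\ge 3$ one has $-1\notin V(-1)$, so I would take a basis $(a_1,\dots,a_x)$ of $V(-1)$ and extend $(-1,a_1,\dots,a_x)$ to a basis of $G$. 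In each scenario, this places exactly $x$ zeros in the first row at the earliest available positions. Since $M_{0,j}=0$ is equivalent to $a_j\in V(a_0)$, no basis choice can produce more than $x$ zeros or zeros at earlier positions, so lex-minimality of $M$ forces precisely the pattern stated.

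Finally I would determine the nonzero values. Let $J\subset\{0,\dots,n-1\}$ index the nonzero positions of the first row, so $|J|=n-x$. I would show that $\{q(a_0,a_j):j\in J\}$ is linearly independent in $(B(S),\ast)$: any relation $\Ast_{j\in J'}q(a_0,a_j)=0$ translates via the linkage relation \eqref{eq:Linkage_Relation} to $\prod_{j\in J'}a_j\in V(a_0)$, which for $J'\ne\emptyset$ contradicts the direct-sum decomposition of $G$ furnished by our basis choice. Given this independence, \ref{red} forces the nonzero entries, read from left to right, to be the successive powers $1,2,4,\dots,2^{n-x-1}$: each new nonzero entry must be a genuinely new $\F_2$-generator of the span of earlier entries—else a substitution $a_j\mapsto a_j\cdot\prod_{j'\in J''}a_{j'}$ for a suitable $J''$ would zero it out, contradicting lex-minimality—and \ref{red} bounds each new entry by the next unused power of $2$. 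The main obstacle is the independence argument in the $s\ge 3$ subcase, where $q(-1,-1)$ must be independent from the $q(-1,a_j)$ with $j>x$: this reduces to checking that $(-1)\cdot\prod_{j\in J'}a_j\notin V(-1)$ for every nonempty $J'\subset\{x+1,\dots,n-1\}$, which follows from $-1\notin V(-1)$ together with the direct sum $G=\spn(-1,a_1,\dots,a_x)\oplus\spn(a_{x+1},\dots,a_{n-1})$.
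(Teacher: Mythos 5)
Your proposal is correct and takes essentially the same approach as the paper: determine $M_{0,0}$ from the level via \ref{M1} and \ref{red}, use lex-minimality to place exactly $x=\dim V(a_0)$ leading zeros corresponding to a basis of the value group, and show the remaining quaternions $q(a_0,a_j)$ are linearly independent in $B(S)$ (since a relation would force a product of basis elements into $V(a_0)$), so that \ref{red} forces the successive powers of $2$. Your treatment is somewhat more explicit about the $\epsilon=0$ versus $\epsilon=1$ case split and the $s\ge 3$ independence subtlety, but the argument is the one in the paper.
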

\begin{proof}
Let $\mathcal{A}=(a_0,\ldots,a_{n-1})$ be a basis of $S$ such that $M_\mathcal{A}(S)=M$. 
First note that $m_{0,0}=0$ if $s\le 2$ (which means that $q(a_0,a_0)=0$) and $m_{0,0}=1$ else. 
By definition of the normal quaternionic matrix, $\mathcal{A}$ is chosen such that we get the maximum number of 'leading zeros', i.e. $x'\in\N_0$ maximal such that $q(a_0,a_i)=0$ for all 
\[i\in \begin{cases} \{0,\ldots,x'-1\} & \text{if } s\le 2\\
\{1,\ldots,x'\} & \text{if } s>2.\end{cases}\] 
This means that $a_0,\ldots,a_{x'-1}$ resp. $a_1,\ldots,a_{x'}$ form a basis of $V(a_0)$. 
By definition, we get $x'=x$. 
Define 
\[J=\begin{cases} \{x,x+1,\ldots,n-1\} &\text{if } s\le 2\\ 
\{0,x+1,x+2,\ldots,n-1\} & \text{if } s>2.\end{cases}\] 
The values $(q(a_0,a_j)\mid j\in J)$ must be independent for else there would be a subset $\emptyset\not= I\subset J$ such that 
\[0=\Ast_{i\in I} q(a_0,a_i)=q(a_0,\prod_{i\in I} a_i)\] 
and thus $\prod_{i\in I}a_i\in V(a_0)$, a contradiction. 
Hence, the first row of $M$ is of the described form.
\end{proof}

As an immediate consequence of the above proof, we can specify exactly which bases of $S$ produce a minimal first row:

\begin{corollary}\label{minimal_first_row}
Let $M$ be a normal quaternionic matrix of order $n$ and type $\epsilon\in \{0,1\}$ with quaternionic structure $S:=S(M)$. Define $x$ as in \Cref{first row nqm}. Let $\mathcal{A}=(a_0,\ldots,a_{n-1})$ be any basis of $S$. The first row of $M_\mathcal{A}(S)$ is identical to that of $M$ if and only if
\begin{itemize}
    \item $s(S)\le 2$ and $V(a_0)=\spn(a_0,\ldots,a_{x-1})$ \textbf{or}
    \item $s(S)>2$ and $V(a_0)=\spn(a_1,\ldots,a_x)$
\end{itemize}
\end{corollary}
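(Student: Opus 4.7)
The plan is to run the proof of \Cref{first row nqm} essentially in reverse, reading off what the zero pattern of the first row forces on the basis $\mathcal{A}$. A preliminary observation makes the bookkeeping easy: for any basis $\mathcal{A}$, the positions of the zero entries in a first row $M_{\mathcal{A},\mathcal{B}}(S)$ are independent of $\mathcal{B}$ (an entry is zero iff the corresponding quaternion is split), so the zero pattern of $M_\mathcal{A}(S)$ coincides with that of $M_{\mathcal{A},\mathcal{B}}(S)$ for any $\mathcal{B}$. Moreover, $M_{0,j}=q(a_0,a_j)$ for $j\geq 1$ and $M_{0,0}=q(a_0,a_0)=q(a_0,-1)$.

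For the ``if'' direction, assume the stated condition on $V(a_0)$. If $s(S)\leq 2$, the relation $a_0\in\spn(a_0,\ldots,a_{x-1})=V(a_0)$ gives $q(a_0,a_0)=0$, and $a_1,\ldots,a_{x-1}\in V(a_0)$ gives $q(a_0,a_i)=0$ for $1\leq i\leq x-1$. If $s(S)>2$, then $a_0=-1\notin V(-1)$ because $q(-1,-1)\neq 0$ (else $s\leq 2$), so $q(a_0,a_0)\neq 0$; while $a_1,\ldots,a_x\in V(-1)=V(a_0)$ gives $q(a_0,a_i)=0$ for $1\leq i\leq x$. In both cases, the remaining entries $q(a_0,a_j)$ lying outside the prescribed zero block are $\F_2$-linearly independent in the 2B-group, by exactly the argument used in \Cref{first row nqm} (any nontrivial linear relation among them would produce an element of $V(a_0)$ outside the given span, a contradiction). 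Choosing $\mathcal{B}$ as in \Cref{lem:properties_qm}\ref{lem:properties_qm3} to begin with these entries taken in lex-order therefore converts them into $1,2,4,\ldots,2^{n-x-1}$, reproducing the first row of $M$.

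For the ``only if'' direction, assume that the first row of $M_\mathcal{A}(S)$ agrees with that of $M$. Reading off zeros directly yields $q(a_0,a_i)=0$ for $i$ in the appropriate range, i.e.\ $a_0,a_1,\ldots,a_{x-1}\in V(a_0)$ if $s\leq 2$ and $a_1,\ldots,a_x\in V(a_0)$ if $s>2$. These $x$ elements are linearly independent as part of the basis $\mathcal{A}$, and $\dim V(a_0)\leq x$: in type $1$ because $a_0=-1$ and $x=\dim V(-1)$, in type $0$ by the very definition of $x$ as the maximum over all $a\in G\setminus\{1\}$. Equality of the spans then follows by dimension count, completing the proof.

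No real obstacle is expected here: the argument is routine bookkeeping on top of \Cref{first row nqm}, the only subtle point being the case distinction $s\leq 2$ versus $s>2$ (equivalently, whether $M_{0,0}$ is $0$ or $1$) and the observation that once $V(a_0)$ has the claimed description, the canonical choice of $\mathcal{B}$ from \Cref{lem:properties_qm}\ref{lem:properties_qm3} realizes the minimal first row automatically.
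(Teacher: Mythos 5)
Your proof is correct and follows exactly the route the paper intends: the paper derives this corollary as an immediate consequence of the proof of \Cref{first row nqm}, and your argument is precisely that proof read forwards and backwards (zero pattern of the first row $\leftrightarrow$ membership of the initial basis vectors in $V(a_0)$, independence of the remaining entries, and the canonical choice of $\mathcal{B}$ from \Cref{lem:properties_qm}\ref{lem:properties_qm3} producing the powers of $2$). The only difference is that you spell out the details the paper leaves implicit.
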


\begin{example}\label{example first row} \Cref{first row nqm} directly produces a list of 'potential' normal quaternionic matrices of order $1$ and $2$ and the different matrices are clearly not isomorphic as the dimension of the value sets $V(a)$ is invariant under isomorphism. 
For type $0$, we get exactly the matrices
\[\begin{pmatrix} 0\end{pmatrix},\quad \begin{pmatrix} 0&0\\0&0\end{pmatrix},\quad \begin{pmatrix} 0&1\\1&0\end{pmatrix}.\] 
which are the normal quaternionic matrices of $\LL_{1,0}$, $\LL_{1,0}^2$ and $\LL_{1,0}\Delta$.\\
For type 1, we get the matrices
\[\begin{pmatrix} 0 \end{pmatrix},\quad\begin{pmatrix} 1\end{pmatrix},\quad \begin{pmatrix} 0&0\\0&0\end{pmatrix},\quad \begin{pmatrix} 0&1\\1&1\end{pmatrix},\quad \begin{pmatrix} 1&0\\0&0\end{pmatrix},\quad \begin{pmatrix} 1&2\\2&2\end{pmatrix}.\]
One readily checks that these are the normal quaternionic matrices of (in the same order) 
\[\LL_{1,1},\quad \LL_1,\quad \LL_{1,1}^2,\quad \LL_{1,1}\Delta,\quad \LL_{1,1}\times \LL_{1},\quad \LL_{1}^2(\cong \LL_1\Delta).\] 
Comparing with the number of elementary type structures of order $1$ and $2$ given in \Cref{tbl_e_n}, this verifies the ETC for structures order $1$ and $2$.
\end{example}

To also characterize the structure of the second row, we introduce a few definitions and a somewhat technical lemma that will also be of use for applying our algorithm in the final section. 
Recall that we bestow the lexicographic order on matrices row by row. 
We are often interested in all entries of a normal quaternionic matrix up to (with respect to that order) a certain row/column index.
In the computations later on, a significant amount of time could be saved by identifying situations in which we can apply a basis transformation to replace an entry $a$ by $a\ast 2^k$ for some $k\in \N_0$ without changing any previous entries. 
In these cases, we can immediately conclude that $2^k$ cannot appear in the binary expansion of $a$.
This leads to the following definition:

\begin{definition}
Given $x, y\in \N$ we say that \emph{$y$ is a factor of $x$} if $x*y<x$.\\ 
Let $M=(M_{i,j})_{0\le i,j\le n-1}$ be a quaternionic matrix of order $n\in \N$.
Take $i,j,k,\ell\in \{0,\ldots,n-1\}$ with $(i,j)\le_\text{lex}(k,\ell)$. 
We say that $M_{i,j}$ is a \emph{unique value (of $M$) up to $(k,\ell)$} if $M_{i,j}$ is a power of 2 that is no factor of any of the elements of $\{M_{i',j'}\mid i'<j',\ (i',j')\not=(i,j) \text{ and } (i',j')<_\text{lex} (k,\ell)\}$ 
\end{definition}

\begin{lemma}\label{exclude factors}
Let $M$ be a normal quaternionic matrix of order $n$ and take $i,j\in \{0,\ldots,n-1\}$ with $i<j$. 
The following hold:
\begin{enumerate}[label=(\alph*)]
\item\label{exclude factors 1}
If there is $k<i$ such that $M_{k,j}\not=0$ and for all $\ell<j$ with $\ell\not=i$, we have $M_{k, \ell}=0$ or $M_{i,\ell}$ is unique up to $(i,j)$, then $M_{k,j}$ is no factor of $M_{i,j}$.
\item\label{exclude factors 2} 
If there is $\ell <j$ such that $M_{i,\ell}\not=0$ and for all $k<i$, we have $M_{k,\ell}=0$ or $M_{k,j}$ is unique up to $(i,j)$, then $M_{i,\ell}$ is no factor of $M_{i,j}$.
\item\label{exclude factors 4} 
If $j+1>i$ and $M_{k,j}=0$ for all $k<i$, then $M_{i,j}\ge M_{i,j-1}$.
\item\label{exclude factors 3} 
If there is $k<i$ such that for all $\ell\ge j$ the entry $M_{k,\ell}$ is not in $\spn(M_{k,0},\ldots,M_{k,\ell-1})$, the following hold:
\begin{enumerate}[label=(\roman*)]
\item\label{exclude factors 31}
If $M_{i,j}\in \spn(M_{k,j'}\mid j'<j)$, then $M_{i,j}\in\spn(M_{i,j'}\mid j'<j)$
\item\label{exclude factors 32}
If $M_{k,j}\in \spn(M_{i,j'}\mid j'<j)$, then $M_{i,j}\in \spn(M_{i,j'}\mid j'<j)$. 
\end{enumerate}
\end{enumerate}
\end{lemma}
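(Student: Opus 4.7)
My plan is to prove all four parts by a common contradiction-based strategy: in each case I assume the conclusion fails and then exhibit a quaternionic matrix isomorphic to $M$ that is strictly smaller in lexicographic order, contradicting the normality of $M$ (using \Cref{lem:reduced_qm}\ref{lem:reduced_qm2} and the fact that simple transformations preserve the isomorphism class).

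For part \ref{exclude factors 1}, assuming $M_{k,j}$ is a factor of $M_{i,j}$, I would apply the simple transformation $T=(i\mapsto i\ast k)$, which is valid since $k<i$ forces $i\neq 0$. This replaces $M_{i,j}$ by $M_{i,j}\ast M_{k,j}<M_{i,j}$ and simultaneously changes each $M_{i,\ell}$ (and by symmetry $M_{\ell,i}$) to $M_{i,\ell}\ast M_{k,\ell}$. The hypothesis exactly controls the earlier lex positions: for $\ell<j$ with $\ell\neq i$, either $M_{k,\ell}=0$ (the entry is untouched), or $M_{i,\ell}=2^{m_\ell}$ is unique up to $(i,j)$. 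In the latter case I would chase $T$ with a basis change of $B(S)$ of the form $e_{m_\ell}\mapsto e_{m_\ell}+q_{k,\ell}$, which restores the entry at $(i,\ell)$ to $2^{m_\ell}$; by the uniqueness of $2^{m_\ell}$ no other entry lex-before $(i,j)$ carries the bit $2^{m_\ell}$, so these earlier entries are undisturbed. Iterating over all relevant $\ell$ yields a quaternionic matrix $M''$ that agrees with $M$ on every position $<_{\text{lex}}(i,j)$ but is strictly smaller at $(i,j)$. Possibly reducing $M''$ makes it no larger, contradicting normality.

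Part \ref{exclude factors 2} is the column-wise analogue: I would use $T=(j\mapsto j\ast \ell)$ to decrease the entry at $(i,j)$ via $M_{i,\ell}$, with the roles of the two uniqueness hypotheses interchanged. Part \ref{exclude factors 4} fits the same template, now with the swap $T=(j-1\leftrightarrow j)$ (whose validity is guaranteed by the hypothesis on $j+1>i$). The hypothesis $M_{k,j}=0$ for all $k<i$ means that after the swap the earlier column-$j$ entries in rows $k<i$ can be matched to the original column-$(j-1)$ entries either directly or via the same uniqueness-based basis-change trick; the only remaining lex change lies at $(i,j-1)$, where the entry becomes $M_{i,j}$, and under the assumption $M_{i,j}<M_{i,j-1}$ this gives the contradiction.

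Part \ref{exclude factors 3} is more algebraic and relies directly on axiom \ref{M3}. For \ref{exclude factors 31}, write the hypothesis as $\overline{M}_{\{i\},\{j\}}=\overline{M}_{\{k\},S}$ for some $S\subset\{0,\ldots,j-1\}$ and apply \ref{M3} to obtain $X\subset\{0,\ldots,n-1\}$ with $\overline{M}_{\{i\},\{j\}}=\overline{M}_{\{i\},X}=\overline{M}_{\{k\},X}$. Split $X=X_1\cup X_2$ with $X_1\subset\{0,\ldots,j-1\}$ and $X_2\subset\{j,\ldots,n-1\}$. Then
\[
\overline{M}_{\{k\},X_2}=\overline{M}_{\{i\},\{j\}}\ast\overline{M}_{\{k\},X_1}\in\spn(M_{k,0},\ldots,M_{k,j-1}),
\]
and the hypothesis that each $M_{k,\ell}$ for $\ell\ge j$ is not in the span of the preceding row entries forces $X_2=\emptyset$. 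Hence $M_{i,j}=\overline{M}_{\{i\},X_1}\in\spn(M_{i,j'}\mid j'<j)$. For \ref{exclude factors 32}, the symmetric application of \ref{M3} to $\overline{M}_{\{k\},\{j\}}=\overline{M}_{\{i\},S}$ produces $X$ with $\overline{M}_{\{k\},X}=M_{k,j}$; the same independence argument now forces $X_2=\{j\}$, yielding $M_{i,j}=M_{k,j}\ast\overline{M}_{\{i\},X_1}$, and the hypothesis on $M_{k,j}$ closes the argument.

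The main obstacle in parts \ref{exclude factors 1}--\ref{exclude factors 4} is the careful bookkeeping of the basis changes in $B(S)$: one must verify that the successive basis changes indexed by the different $\ell$ (or $k$) with a nonzero ``off-diagonal'' entry can be composed without stepping on one another and without altering any entry lex-before $(i,j)$. This is precisely what the \emph{unique value up to $(i,j)$} hypothesis is engineered to ensure, as distinct $m_\ell$-bits act independently. Part \ref{exclude factors 3} is by contrast a clean structural consequence of \ref{M3} together with the independence assumption on the row $k$.
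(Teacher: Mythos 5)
Your proposal is correct and follows essentially the same route as the paper: parts (a)--(c) via the simple transformations $(i\mapsto i\ast k)$, $(j\mapsto j\ast\ell)$ and $(j-1\leftrightarrow j)$ followed by normality, with the ``unique value'' hypothesis guaranteeing that the subsequent relabelling of the $2$-Brauer basis (the paper phrases this as passing to $M'_{\text{red}}$) leaves all entries lex-before $(i,j)$ untouched; and part (d) via \ref{M3} together with the independence of the row-$k$ entries from column $j$ onward, splitting the common-slot set $X$ into its parts below and at-or-above $j$ exactly as the paper does.
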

\begin{proof}
\ref{exclude factors 1}:
We apply the transformation $(i\to k\ast i)$ to obtain the quaternionic matrix $M'$ of $S$ and set $M'':=M'_\text{red}$.
We clearly have $M'_{i',j'}=M_{i',j'}$ if $i\not\in \{i',j'\}$. 
For $\ell\not=i$, $\ell <j$, we get 
\[M'_{i,\ell}=M_{i,\ell}\ast M_{k,\ell}.\]
If $M_{k,\ell}=0$, then $M'_{i,\ell}=M_{i,\ell}$. 
Otherwise, $M_{i,\ell}$ is unique up to $(i,j)$ and thus, we still get 
\[M'_{i,\ell} \not \in \spn(M'_{i', j'} \mid (i',j')<_\text{lex} (i,\ell)\}.\] 
Hence, we have $M''_{i,\ell}=M_{i,\ell}$ in either case. 

Although we transformed the (implicitly employed) quaternionic basis by considering the reduced matrix $M''=M'_\text{red}$, we still get $M''_{i',j'}=M_{i',j'}$ for all $(i',j')<_\text{lex} (i,j)$ since the exchanged basis quaternions are no factors of any other entries of $M$. 
As $M$ is normal, it follows that $M''_{i,j}\ge M_{i,j}$ and since $M'_{i,j}=M_{i,j}\ast M_{k,j}$, we have that $M_{k,j}$ is no factor of $M_{i,j}$.

\ref{exclude factors 2}:
This follows in the same way as \ref{exclude factors 1}, now considering the transformation $(j\to j\ast \ell)$.

\ref{exclude factors 4}:
This follows similarly to \ref{exclude factors 1} and \ref{exclude factors 2}, without the need to consider unique values,  by the transformation $(j-1\leftrightarrow j)$.

\ref{exclude factors 3}:
It follows immediately from the given condition that $\overline{M}_{2^k, x}\not=\overline{M}_{2^k,y}$ for $x< 2^j\le y$. 
By \ref{M3} and the definition of the completion, if
\[M_{i,j}\in \spn(M_{k,j'}\mid j'<j)=\{\overline{M}_{2^k,x}\mid x\in \{0,\ldots,2^j-1\}\}\] 
then we already have $M_{i,j}\in \spn(M_{i,j'}\mid j'<j)$. 
This shows \ref{exclude factors 31}.

If now $M_{k,j}\in \spn(M_{i,j'}\mid j'<j)$, we can apply \ref{M3} again, making use of the fact that $\overline{M}_{2^k,x}\not=M_{k,j}$ for $x<2^j$ and also for $x\ge 2^{j+1}$ since $M_{k,j+1},\ldots,M_{k,n-1}$ are all independent from the previous entries of the row. 
Hence, there must be $y$ with $2^j\le y<2^{j+1}$ such that $\overline{M}_{2^i,y}=M_{k,j}$ which proves \ref{exclude factors 32}.
\end{proof}

\begin{proposition}\label{second row nqm}
Let $n\in \N$, $n\ge 3$ and $M\in \Mat_n(\N_0)$ be a normal quaternionic matrix with level $s:=s(S(M))$.
Then there are unique $d,r,t,u,v\in \N_0$ satisfying $t\le u\le d\le v$ such that the first two rows of $M$ can be written as
\[\left(\begin{array}{ccccccc} \begin{matrix} M_{0,0} &M_{0,1}\\ M_{1,0}& M_{1,1} \end{matrix} & B_1 & B_2 & B_3 & B_4 & B_5 & B_6\end{array}\right)\]
with matrices $B_i\in \Mat_{2\times k_i}(\N_0)$ and 
\[(k_1,\ldots,k_6)=(n-r-v-2, r, v-d, t, u-t, d-u),\] 
the case of zero columns being allowed and interpreted as a non existent block. 
If $s\in \{1,2\}$ and $M_{0,1}=0$, these matrices can be written as the following:
\[B_1=\begin{pmatrix} 0&\ldots &0\\0&\ldots &0\end{pmatrix},\ 
B_2=\begin{pmatrix} 0&\ldots&0\\ 2^0&\ldots&2^{r-1}\end{pmatrix},\  
B_3=\begin{pmatrix} 0&\ldots &0\\ 2^d&\ldots&2^{v-1}\end{pmatrix}\]
\[B_4=\begin{pmatrix} 2^0&\ldots &2^{t-1}\\0&\ldots&0\end{pmatrix},\  
B_5=\begin{pmatrix} 2^t&\ldots&2^{u-1}\\ 2^t&\ldots&2^{u-1}\end{pmatrix},\ 
B_6=\begin{pmatrix} 2^u&\ldots& 2^{d-1}\\ 2^{v}&\ldots&2^{v+d-u-1}\end{pmatrix}\]
If $s>2$ and $M_{0,1}=0$, the entries of $B_3,\ldots,B_6$ are multiplied by $2$ and the entries of $B_2$ are multiplied by a factor $f\in \{1,2\}$.

If $M_{0,1}\not=0$, the blocks $B_1,B_2$ and $B_3$ do not appear while the entries of $B_4, B_5$ and $B_6$ are multiplied by $2M_{0,1}$.
\end{proposition}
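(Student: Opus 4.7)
The plan is, after fixing the first row via Proposition \ref{first row nqm}, to exploit the residual freedom in the choice of basis---specifically, transformations of the form $(1 \mapsto 1 \ast k)$, $(j \leftrightarrow k)$, and $(j \mapsto j \ast k)$ for $j, k \geq 2$---to sort the columns into the six described blocks and normalize the second row within each. I would first reduce to the ``main case'' $s \in \{1, 2\}$ and $M_{0,1} = 0$: when $s > 2$ and $M_{0,1} = 0$ the basis element $2^0$ of the abstract $2$-Brauer group is already occupied by $M_{0,0} = 1$, so all subsequent basis elements from the first row (and hence the entries of $B_3, \ldots, B_6$) are shifted by a factor of $2$, while the factor for $B_2$ is $1$ or $2$ depending on whether the corresponding bottom value coincides with $M_{0,0}$; when $M_{0,1} \neq 0$, Proposition \ref{first row nqm} forces $x \in \{0,1\}$, eliminating $B_1, B_2, B_3$ and shifting $B_4, B_5, B_6$ by the factor $2 M_{0,1}$.

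For the main case, I partition the columns $\{2, \ldots, n-1\}$ into the top-zero part $J_0 = \{j : M_{0,j} = 0\}$ and the top-nonzero part $J_1 = \{j : M_{0,j} \neq 0\}$. Within $J_0$ one may freely apply $(j \leftrightarrow k)$ and $(j \mapsto j \ast k)$ without affecting the first row; combined with axiom \ref{red}, a Gaussian-elimination-style reduction on the bottom entries (with respect to $\ast_{\N_0}$) yields three sub-blocks: $B_1$ (both entries zero), $B_2$ (bottom entries in the span of the basis elements introduced by the first row, normalized to $2^0, \ldots, 2^{r-1}$), and $B_3$ (bottom entries that are \emph{new} basis elements, normalized to $2^d, \ldots, 2^{v-1}$). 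Lexicographic minimality dictates the order $B_1, B_2, B_3$ and the increasing powers within each, while a suitable permutation inside $J_1$ aligns the basis element labels so that those appearing in $B_2$'s bottom row are precisely the first $r$ among those contributed by the first row.

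For $J_1$, after the aforementioned permutation the first-row entries are $2^0, \ldots, 2^{d-1}$ in order, and I subdivide $J_1$ according to the second-row value: $B_4$ if $M_{1,j} = 0$ (equivalently $a_j \in V(a_1)$), $B_5$ if $M_{1,j} = M_{0,j}$ (equivalently $a_j \in V(a_0 a_1)$ by \eqref{eq:Linkage_Relation}), and $B_6$ if $M_{1,j}$ is a new basis element. The crucial step is to rule out ``mixed'' columns where $M_{1,j}$ is a nontrivial $\ast_{\N_0}$-combination of previously introduced basis elements. Here Lemma \ref{exclude factors} does the decisive work: every $2$-power factor of such an $M_{1,j}$ arises either as an earlier entry $M_{0,k}$ in $J_1$ (which the uniqueness hypothesis allows one to clear via $(1 \mapsto 1 \ast k)$ without disturbing earlier entries) or as an earlier entry $M_{1,k'}$ from $B_2 \cup B_3$ (cleared via $(j \mapsto j \ast k')$ after checking the effect on the first row). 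Iterating, only the three types $B_4, B_5, B_6$ survive, and the canonical form within each follows from \ref{red} together with lex-minimality.

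The main obstacle I anticipate is this last elimination step: verifying carefully that each transformation used to remove a mixed entry preserves all lexicographically earlier entries, which is exactly the content of the uniqueness hypotheses of Lemma \ref{exclude factors}. A secondary bookkeeping obstacle is extracting the parameters $(r, v, t, u, d)$ and verifying $t \le u \le d \le v$; these follow from the requirement that basis elements of the abstract $2$-Brauer group appear in increasing order of exponent as one scans the matrix in lex order, which forces the block ordering $B_4, B_5, B_6$ on $J_1$ and $B_2$ before $B_3$ on $J_0$.
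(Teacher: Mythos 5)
Your global strategy (partition the columns into the top-zero part $J_0$ and the top-nonzero part $J_1$, then normalize within each to produce $B_1,B_2,B_3$ resp.\ $B_4,B_5,B_6$) differs from the paper's proof, which runs a single left-to-right induction on the column index $k$ with a case distinction on which block $M_{1,k-1}$ lies in. The targets you identify are the right ones, but there is a genuine gap at the decisive step: ruling out second-row entries that are nontrivial $\ast_{\N_0}$-combinations of the \emph{first-row} basis elements, e.g.\ $M_{1,k}=M_{0,n-d}\ast M_{0,n-d+1}=3$ in a $J_0$-column, and pinning such an entry down to exactly $2^0$ (so that $B_2$ really consists of the single powers $2^0,\ldots,2^{r-1}$). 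None of the tools you invoke achieves this. Gaussian elimination among the $J_0$-columns only combines the bottom entries with each other and cannot turn a $3$ into a $1$; a permutation inside $J_1$ only relabels the first-row basis elements and cannot merge two of them; and Lemma \ref{exclude factors}\ref{exclude factors 1}/\ref{exclude factors 2} only excludes as factors those powers of $2$ that occur as \emph{earlier matrix entries} satisfying the uniqueness hypotheses, which does not cover an arbitrary $M_{0,i}$ with $i>$ current column. The paper closes this by a competing-basis argument: it extends $(a_0,\ldots,a_{n-d-1},\prod_{i\in I}a_i)$ to a new basis, uses Corollary \ref{minimal_first_row} to see that the \emph{reduced} matrix of this basis has the same first row and the same earlier second-row entries, and concludes from lex-minimality that $M_{1,k}\le M'_{1,k}=M_{0,n-d}$; the analogous argument with $\ell$ minimal such that $M_{0,\ell}\notin\{M_{1,k'}\mid k'<k\}$ handles Case 1.2. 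Some such comparison with a re-reduced matrix for a genuinely different basis of $\spn(a_{n-d},\ldots,a_{n-1})$ is unavoidable, and your proposal does not contain it.

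A second, concrete error: you claim a factor $M_{0,k}$ of a mixed entry $M_{1,j}$ can be "cleared via $(1\mapsto 1\ast k)$". That transformation replaces $a_1$ by $a_1a_k$ and hence sends $M_{1,j}$ to $M_{1,j}\ast M_{k,j}$, not to $M_{1,j}\ast M_{0,k}$; the entry $M_{k,j}$ sits in row $k\ge 2$ and is not determined by (nor controllable from) the first two rows, so this move does not do what you need and moreover disturbs the entire second row. The correct mechanism for $J_1$-columns is the paper's Case 2: use \Cref{exclude factors}\ref{exclude factors 3}\ref{exclude factors 31} to force $I=\emptyset$ or $I\not\subset\{0,\ldots,k-1\}$, treat $I=\emptyset$ with the swap $(k-1\leftrightarrow k)$, and treat the remaining case again by the competing-basis comparison. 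Your identification of $B_4,B_5,B_6$ with $V(a_1)$, $V(a_0a_1)$ and "new" quaternions is correct, and the bookkeeping of the parameters is indeed routine once the entries are pinned down, but as written the proposal does not establish the values of the $B_2$ and $B_6$ entries.
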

\begin{proof}
By \Cref{first row nqm}, the first row of $M$ has the asserted shape, setting $d:=n-x$ if $s\le 2$ and $d:=n-x-1$ otherwise. 

We write $S:=S(M)=(G,-1,q)$ and fix a basis $\mathcal{A}=(a_0,\ldots,a_{n-1})$ of $S$ such that $M_\mathcal{A}(S)=M$. 
To keep notation brief, we usually only write $q$ for the quaternionic map instead of $q_\mathcal{B}$ for a basis $\mathcal{B}$ of the full 2B-group $B$ group whenever no confusion is imminent. 


Let now $k \in \{2, \ldots, n-1\}$ be such that $M_{1, 0}, \ldots, M_{1, {k-1}}$ are as claimed.
Set 
\[\alpha:=\Min\{2^x\mid 2^x> M_{i,j} \text{ for all } (i,j)<_\text{lex} (1,k)\}.\] 
We have $M_{1,k}=\alpha$ if and only if $M_{1,k}$ is independent from the previous entries of $M$. 
Otherwise, $M_{1,k}$ is the quaternionic product of certain previously appearing entries. 
By \Cref{exclude factors}\ref{exclude factors 2} and induction hypothesis, the nonzero elements among $M_{1,2},\ldots,M_{1,k-1}$ are no factors of $M_{1,k}$. 
Furthermore, the entries $M_{1,0}$ and $M_{1,1}$ naturally appear in the first row. 
Hence, in this case, we can write 
\[M_{1,k}=\Ast_{i\in I} M_{0,i}\] 
for some set $I\subset \{0,n-d,n-d+1,\ldots,n-1\}$.  

In the following, we distinguish cases regarding in which of the blocks from $B_1,\ldots,B_6$ the entry $M_{1,k-1}$ lies. 
We then show that in every case, $M_{1,k}$ either \emph{continues} this block in the claimed way or \emph{begins} a block with higher index. 
We first only consider the case $M_{0,1}=0$ although it does not make much difference for the arguments, as pointed out at the end of the proof. 

\underline{Case 1}: $M_{0,k}=0$

\underline{Case 1.1}: $k=2$ or $M_{1,k-1}$ lies in $B_1$ (i.e. $M_{0,k-1}=M_{1,k-1}=0$)

If $M_{1,k}=0$ or $M_{1,k}=\alpha$, we have nothing to show as $M_{1,k}$ continues $B_1$ or begins $B_3$. 
Hence, we can assume $M_{1,k}=\Ast_{i\in I} M_{0,i}$ with $I\not=\emptyset$. 
If $s>2$ and $I=\{0\}$, we get $M_{1,k}=1$, beginning $B_2$ with $f=1$. 
Otherwise, $I$ contains an element $j\ge n-d$, so 
\[\left(a_0,\ldots,a_{n-d-1},\prod_{i\in I} a_i\right)\] 
is independent and can be extended to a basis $\mathcal{A}'$ of $S$. 
By \ref{minimal_first_row}, its quaternionic matrix $M'$ has the same first row as $M$ and $M'_{1,2}=\ldots=M'_{1,k-1}=0$. 
By the minimality of $M$, we now get 
\[0<M_{1,k}\le M'_{1,k}=q\left(a_0, \prod_{i\in I} a_i\right) = M'_{0,n-d} = M_{0,n-d}\]
and thus $M_{1,k}=1$ if $s\le 2$ and $M_{1,k}=2$ else.
Note that in the latter case, $M_{1,k}$ begins $B_2$ with $f=2$ while the former case satisfies $f=1$ as asserted.

\underline{Case 1.2}: $k>2$ and $M_{1,k-1}$ lies in $B_2$ or $B_3$

By \Cref{exclude factors}\ref{exclude factors 4}, we have $M_{1,k}\ge M_{1,k-1}>0$ and as $M_{1,k-1}$ is a power of 2 and no factor of $M_{1,k}$, even $M_{1,k}\ge 2\cdot M_{1,k-1}$. 
If $M_{1,k}=\alpha$ we are done as in the previous case. In particular, this occurs if $M_{1,k-1}$ lies in $B_3$ since then $\alpha=2M_{1,k-1}$. 

Otherwise, since $M_{1,k}\ge 2M_{1,k-1}$, there is $n-d\le i\in I$ such that $M_{0,i}\not\in \{M_{1,k'}\mid k'<k\}$. 
Choose $\ell$ such that $M_{0,\ell}=2M_{1,k-1}$. 
Then $\ell\ge n-d$ is minimal such that $M_{0,\ell}\notin \{M_{1,k'}\mid k'<k\}$.
This means that in particular $i\ge \ell$ and 
\[\left(a_0, \ldots, a_{\ell-1}, \prod_{i\in I} a_i\right)\] 
is independent and can be extended to a basis $\mathcal{A}'$ of $S$. 
By \ref{minimal_first_row}, the matrix $M':=M_{\mathcal{A}'}(S)$ has the same first row as $M$ and since $(a_0,\ldots,a_{\ell-1})$ were not changed in the basis, we get $M_{1,k'}=M'_{1,k'}$ for all $k'<k$:
This is clear for entries in $B_1$ or $B_3$, while for $B_2$ we point out that we have $M_{1,k'}=M_{0,i}$ for some $i<\ell$ by the way we chose $\ell$.
By the minimality of $M$, we get
\[M_{1,k}\le M'_{1,k}=M'_{0,\ell}=M_{0,\ell}=2M_{1,k-1},\]
completing the case.

\underline{Case 2}: $M_{0,k}\not=0$

If $M_{1,k}=\alpha$, we begin or continue the last block $B_6$ and have nothing to show. 
Hence, we again assume $M_{1,k}=\Ast_{i\in I} M_{0,i}$. 
By \Cref{first row nqm}, the entries $M_{0,k},\ldots,M_{0,n-1}$ are all independent from the previous entries of the first row and we can apply \Cref{exclude factors}\ref{exclude factors 3}\ref{exclude factors 31}:
If $I\subset \{0,\ldots,k-1\}$, meaning that $M_{1,k}\in \spn(M_{0,j}\mid j<k)$, then we must have $M_{1,k}\in \spn(M_{1,j}\mid j<j)$.
However, by \Cref{exclude factors}\ref{exclude factors 2} and the fact that by induction hypothesis, $M_{0,k}$ is unique up to $(1,k)$, no $M_{1,j}$ for $j<k$ is a factor of $M_{1,k}$, meaning that $M_{1,k}=0$ and $I=\emptyset$.
We can conclude that we always get
\[I = \emptyset \quad \text{or} \quad I \not \subset \{0, \ldots, k - 1 \}.\] 
Considering the transformation $(k-1 \leftrightarrow k)$ in a similar way as in \Cref{exclude factors}\ref{exclude factors 4}, the case $I=\emptyset$ (beginning or continuing $B_4$) can only occur if $M_{1,k-1}$ does not lie in $B_5$ or $B_6$.

It remains to consider the case $I\not\subset \{0,\ldots,k-1\}$, meaning that 
\[\left(a_0,\ldots,a_{k-1},\prod_{i\in I} a_i\right)\] 
is independent and can be extended to a basis of $S$. 
As $M_{0,k},\ldots,M_{0,n-1}$ are unique up to $(1,k)$ by induction hypothesis, it now follows just like in \emph{Case 1.2} that $M_{1,k}=M_{0,k}$, completing \emph{Case 2}.

Lastly, if $M_{0,1}\not=0$, by \Cref{first row nqm} we have 
\[M_{0,1} = \begin{cases}1, & \text{if $s\le 2$}\\ 2, &\text{if $s>2$.} \end{cases}\] 
In either case, we get $M_{0,k}\not=0$ for all $k\ge 2$, so the blocks $B_1,B_2,B_3$ do not appear and we only have to consider case 2 above which works in the same way here. 
The only difference is that the first appearing block starts with entry $M_{0,2}=2M_{0,1}\in \{2,4\}$ as its first entry which carries forward as a factor for all other entries.

Note that the uniqueness of the parameters $d,f,r,t,u,v$ follows immediately from the uniqueness of the normal quaternionic matrix.
\end{proof}

We note the following specifications:

\begin{corollary}\label{specifications parameters}
Let $M\in \Mat_n(\N_0)$ be a normal quaternionic matrix and define $s,d,f,r,t,u,v$ as in \Cref{second row nqm}. 
The following hold:
\begin{enumerate}[label=(\alph*)]
\item\label{specifications parameters 1} If $s\le 2$ or $f=2$, we have $t\ge r$. 
Otherwise, we have $t\ge r-1$.
\item\label{specifications parameters 2} If $s\le 2$, we have $2t\ge r+u$.
\item\label{specifications parameters 3} If $s=1$ and $d\not=n-1$, we have $t\le r+v-d$.
\end{enumerate}
\end{corollary}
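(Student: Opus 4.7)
All three inequalities are established by contradiction with the normality of $M$: if an inequality fails, the plan is to exhibit a basis transformation of $S(M)$ whose induced quaternionic matrix is strictly lex-smaller than $M$ after reduction, contradicting the minimality of $M$.

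For part (a), suppose $s\le 2$ and $t<r$. The column $k_b$ of $B_2$ at which $M_{1,k_b}=2^t$ exists (since $t\le r-1$), so $(M_{0,k_b},M_{1,k_b})=(0,2^t)$; the first column $k_c$ of $B_5$ carries $(2^t,2^t)$. The simple transformation $(k_c\mapsto k_c\ast k_b)$, replacing $a_{k_c}$ with $a_{k_c}a_{k_b}$, leaves row $0$ unchanged (since $M_{0,k_b}=0$) and leaves every row-$1$ entry at columns $<k_c$ intact, but replaces $M_{1,k_c}=2^t$ by $2^t\ast 2^t=0$. The resulting matrix is strictly lex-smaller than $M$, a contradiction. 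The cases $s>2$ with $f\in\{1,2\}$ are handled by the same argument after adjusting for the factor-of-$2$ shift of block values; in particular, when $f=1$ the first $B_5$-value in row $1$ is $2^{t+1}$, which is available in $B_2$ only when $t+1\le r$, yielding the weaker bound $t\ge r-1$.

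For part (c), the hypothesis $s=1$ gives $-1=1$ in $G$, so the first basis element $a_0$ is not forced to be $-1$ and may be chosen as any element of $G\setminus\{1\}$. By \Cref{first row nqm} and \Cref{minimal_first_row}, the normality of $M$ amounts to choosing $a_0$ to maximize $\dim V(a_0)=n-d$. From the block structure of \Cref{second row nqm} we read $\dim V(a_1)=n-r-v+t$. Assuming $t>r+v-d$, we have $\dim V(a_1)>\dim V(a_0)$; taking $a_1$ as the new first basis vector then produces a matrix with strictly more leading zeros in row $0$, hence lex-smaller after canonicalization, a contradiction. The exclusion $d\ne n-1$ is a technicality: when $d=n-1$, the conditions $v\ge d$ and $r+v\le n-2$ required by $\vert B_1\vert\ge 0$ force $r<0$, so the parameter decomposition of \Cref{second row nqm} cannot hold.

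Part (b) is the most involved and is the main obstacle to giving a uniformly clean proof. Assuming $s\le 2$ and $2t<r+u$, equivalently $u-t>t-r$, the natural candidate is the transformation $(1\mapsto 1\ast 0)$ replacing $a_1$ by $a_0 a_1$: this preserves row $0$, zeroes out row $1$ on $B_5$ (where rows $0$ and $1$ had agreed), and turns $B_4$ into equal-pair columns of type $(2^i,2^i)$, while modifying $B_6$ in a controlled way. The transformed matrix is not canonical; further reduction via \Cref{lem:reduced_qm} requires relabeling the basis of the abstract $2$-Brauer group so that row-$0$ powers again appear in increasing order. A block-by-block lex comparison of the canonicalized transformed matrix with $M$, invoking part (a) on the canonicalization to control the new $B_4$-position and the shifted values in the $B_2$ block, forces $u-t\le t-r$. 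The principal technical difficulty lies in tracking the 2-Brauer group basis change carefully enough to extract the claimed inequality without spurious interference from the later rows of $M$.
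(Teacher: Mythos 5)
Your part (c) is essentially the paper's argument (maximality of $\dim V(a_0)=n-d$ versus $\dim V(a_1)=n-r-v+t$ read off the block structure) and is fine. Part (a), however, has a real case gap. Your transformation $(k_c\mapsto k_c\ast k_b)$ does produce a lex-smaller reduced matrix, but only when the column carrying $2^t$ in row $0$ actually lies in $B_5$, i.e.\ when $t<u$. Since $B_5$ is allowed to be empty in \Cref{second row nqm}, the configuration $t=u<r$ must also be excluded, and there your argument fails: the column with $M_{0,k}=2^t$ is then the first column of $B_6$ and carries $(2^t,2^v)$, so the transformation turns the $(1,k)$-entry into $2^v\ast 2^t$, which after reduction is again the next new power $2^v$ — no lex decrease, no contradiction. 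The paper kills this case with the common slot property rather than a transformation: since $2^t=M_{0,k}$ already occurs among the earlier second-row entries, \Cref{exclude factors}\ref{exclude factors 3} forces $M_{1,k}\in\spn(M_{1,j'}\mid j'<k)$, and combined with \Cref{exclude factors}\ref{exclude factors 2} this forces $M_{1,k}=0$, incompatible with $M_{1,k}=2^v$. You need this M3-based ingredient; minimality alone does not cover the empty-$B_5$ case. (Separately, in the $s>2$, $f=1$ computation, $2^{t+1}$ lies in $B_2$'s second row iff $t+1\le r-1$, not $t+1\le r$; your final bound $t\ge r-1$ happens to be right, but the intermediate condition as stated would give $t\ge r$.)

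For (b) you have correctly identified the paper's transformation $(1\mapsto 1\ast 0)$, which exchanges the roles of $B_4$ and $B_5$, but the rest is a sketch that you yourself flag as unresolved, and the comparison you describe only yields $u-t\le t$, not the claimed $u-t\le t-r$. The missing step is concrete and is exactly the mechanism from your own part (a): after $(1\mapsto 1\ast 0)$ the old $B_4$-columns become $(2^i,2^i)$ for $i<t$, and for the $r$ of them with $i<r$ the value $2^i$ also occurs as a second-row entry of $B_2$, so \Cref{exclude factors}\ref{exclude factors 2} zeroes out those $r$ columns as well; permuting the resulting $r+(u-t)$ zero-columns to the front of the block (which by \Cref{minimal_first_row} leaves the first row and the blocks $B_1,B_2,B_3$ intact) produces an isomorphic matrix with $r+u-t$ zeros where $M$ has only $t$, whence $t\ge r+u-t$ by normality. ``Invoking part (a) on the canonicalization'' cannot substitute for this: part (a) is a statement about normal matrices and the transformed matrix is not normal, and in any case $t\ge r$ does not produce the extra $r$ in the inequality. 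As written, (b) is not proved.
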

\begin{proof}
We keep the notation from the previous proof.

\ref{specifications parameters 1}: 
If for $k\ge 2$, the entry $0\not=M_{0,k}$ appears as $M_{1,\ell}$ for $\ell<k$, we get $M_{1,k}\in \spn(M_{1,k'}\mid k'<k)$ by \Cref{exclude factors}\ref{exclude factors 3}\ref{exclude factors 31}. 
Applying \Cref{exclude factors}\ref{exclude factors 2}, we can exclude $M_{1,\ell}=M_{0,k}$ as a factor. Hence, $M_{1,k}=0$ by \Cref{second row nqm}.

As this happens for all second row entries of $B_2$ with the exception of $1$ if $s>2$ and $f=1$, the claim follows.

\ref{specifications parameters 2}: 
Consider the transformation $(2\mapsto 1\ast 2)$: 
This does not change any of the entries at indices $(0,0)$, $(0,1)$, $(1,0)$, $(1,1)$ as $s\le 2$ and clearly also not the entries in $B_1$, $B_2$ and $B_3$ while $B_4$ and $B_5$ are transformed into
\[\begin{pmatrix} 2^0&\ldots&2^{t-1}\\ 2^0&\ldots&2^{t-1}\end{pmatrix}\quad\text{and}\quad\begin{pmatrix} 2^t&\ldots&2^{u-1}\\0&\ldots&0\end{pmatrix}.\]
Applying \Cref{exclude factors}\ref{exclude factors 2} with the second row entries of $B_2$, we can further transform $B_4$ to 
\[\begin{pmatrix} 2^0&\ldots &2^{r-1}&2^r&\ldots&2^{t-1}\\0&\ldots&0&2^r&\ldots&2^{t-1}\end{pmatrix}.\]
Permuting the $u-t$ columns in front of the last $t-r$ columns of $B_4$, we can thus attain a block of the shape
\[\begin{pmatrix} 2^0&\ldots&2^{r+u-t}\\0&\ldots&0\end{pmatrix}\]
instead of $B_4$ without changing the first row (cf. \ref{minimal_first_row}) or the first three blocks as seen above. 
By the minimality of $M$, we must have $t\ge r+u-t$.

\ref{specifications parameters 3}: 
By \Cref{first row nqm}, the dimension $\dim(V(a_0))=n-d$ is maximal. 
If $d\not=n-1$, it follows that $M_{0,0}=M_{1,0}=M_{0,1}=M_{1,1}=0$ and counting the number of zeros in the second row with \Cref{second row nqm}, we get ${\dim(V(a_1))=n-r-v+t}$. 
We thus have $n-r-v+t\le n-d$ and the claim follows.
\end{proof}

\begin{example}\label{ex_nqm of order 3}
Any normal quaternionic matrix of order up to 3 is uniquely defined by its first two rows and its type. 
There are exactly 17 isomorphism types of elementary type structures of order $3$ as can be seen in \Cref{counting ets}. 
The previous result gives rise to 4 options when the type is 0 and 14 options when the type is $1$. 
One of the latter does not define a quaternionic matrix as \ref{M3} is violated for  
\[M:=\begin{pmatrix} 0&1&2\\1&1&0\\2&0&2\end{pmatrix}.\] 
This can be seen as the rows with index $3$ resp. $5$ of $\overline{M}$ are $(0\ 1\ 0\ 1\ 2\ 3\ 2\ 3)$ and $(0\ 2\ 1\ 3\ 0\ 2\ 1\ 3)$, so the entries $1$ and $2$ appear in both rows but do not appear in common columns.

The other 17 candidates from \Cref{second row nqm} thus have to be the normal quaternionic matrices of the 17 different isomorphism types of quaternionic structures of order 3 (cf. \Cref{counting ets}).
For brevity, we only display the 10 nondegenerate cases:

Type 0:
\[
\LL_{1,0}^2\Delta:\begin{pmatrix}0&0&1\\0&0&2\\1&2&0\end{pmatrix}, \ 
\LL_{1,0}\Delta_2:\begin{pmatrix} 0&1&2\\1&0&4\\2&4&0\end{pmatrix}\]
Type 1:
\[
\LL_{1,1}^2\Delta:\begin{pmatrix} 0&0&1\\0&0&2\\1&2&1\end{pmatrix}, \ 
\LL_{1,1}\Delta_2: \begin{pmatrix} 0&1&2\\1&1&4\\2&4&2\end{pmatrix}, \ 
\LL_3:\begin{pmatrix} 1&0&0\\0&0&1\\0&1&0\end{pmatrix}, \ 
\]
\[
\LL_1\times\LL_{1,0}\Delta:\begin{pmatrix} 1&0&0\\0&0&2\\0&2&0\end{pmatrix}, \ 
\LL_1\times \LL_{1,1}\Delta:\begin{pmatrix} 1&0&2\\0&0&2\\2&2&2\end{pmatrix},
\]
\[
(\LL_{1,1}\times\LL_1)\Delta:\begin{pmatrix}1&0&2\\0&0&4\\2&4&2\end{pmatrix}, \ 
\LL_1^3:\begin{pmatrix}1&2&4\\2&2&0\\4&0&4\end{pmatrix}, \ 
\LL_1^2\Delta:\begin{pmatrix}1&2&4\\2&2&8\\4&8&4\end{pmatrix}.
\]
\end{example}

\begin{remark}\label{rem_nqm of elementary types}
Given $n\in \N$, the number $e(n)$ of elementary type structures of order $n$ (cf. \ref{counting ets}) gives a lower bound on the number of normal quaternionic structures.
In order to show that the ETC holds for order $n$, it thus suffices to show that $e(n)$ also is an upper bound for the number of different normal quaternionic matrices of order $n$.
Hence, we do not focus on giving a detailed and general description of normal quaternionic matrices of elementary type structures here.
However, we point out a few details without proof.
Furthermore, many examples for normal quaternionic matrices of order up to $6$ are included in \ref{sec_ListNQM}.
\begin{enumerate}[label=(\alph*)]
\item We have seen the normal quaternionic matrices of $\LL_{1,0},\LL_{1,1},\LL_1$ in \Cref{ex:qm_basic_examples}.
\item The normal quaternionic matrices of local type structures have been computed in \Cref{prop:nqm_local_types}.
\item One can obtain the normal quaternionic matrix of a group extension $S\Delta$ from a normal quaternionic matrix of $S$ by adding an independent column and reducing. 
For example consider $S=\LL_{1}\times(\LL_{1}\times \LL_{1})\Delta$ and $S\Delta$ with (cf. \Cref{ex:linkage_matrices})
\[M(S)=\begin{pmatrix} 1&2&4&8\\2&2&0&0\\4&0&4&16\\8&0&16&8\end{pmatrix}\text{ and } M(S\Delta)=\begin{pmatrix}1&2&4&8&16\\2&2&0&0&32\\4&0&4&64&128\\8&0&64&8&256\\16&32&128&256&16\end{pmatrix}.\]
\item\label{nqm_direct_product} For direct products, the situation is more involved. 
Take structures $S=(G,-1,q)$ and $S'=(G',-1,q')$ with normal quaternionic matrices $M=M_\mathcal{A}(S)$ and $M'=M_{\mathcal{A}'}(S')$ for bases $\mathcal{A}=(a_0,\ldots,a_{n-1})$ of $S$ and $\mathcal{A}'=(a_0',\ldots,a_{n'-1}')$ of $S'$. 
We conjecture that we get $M(S\times S')=M_{\mathcal{C}}(S\times S')$ for a basis $\mathcal{C}=(c_0,\ldots,c_{n+n'-1})$ that is close to a union of $\mathcal{A}$ and $\mathcal{A'}$ in the following way: Consider $G$ and $G'$ as subsets of $G\times G'$ canonically. 
Then $c_0\in \{a_0,a_0',a_0a_0'\}$ depending on the levels of $S$ and $S'$ and 
\[\{c_1,\ldots,c_{n+n'-1}\}\subset \{a_i,a_j'\mid 0\le i< n,\ 0\le j< n'\}.\]
\end{enumerate}
\end{remark}

To close this section, we point out the following example to illustrate how the situation of direct products becomes slightly more involved as mentioned in \Cref{rem_nqm of elementary types}\ref{nqm_direct_product}, especially when multiple factors have level different from one.

\begin{example}
The situation for direct products also gives rise to quaternionic matrices of elementary type structures where nonzero entries appear that are no powers of 2. 
This is due to the fact that we have to fix $-1$ as the first basis element. 
For example, we get
\[M(\LL_3)=\begin{pmatrix}1&0&0\\0&0&1\\0&1&0\end{pmatrix}\quad\text{and}\quad 
M(\LL_3^2)=\begin{pmatrix} 1&0&0&0&0&2\\0&0&0&0&2&0\\0&0&0&3&0&0\\0&0&3&0&0&0\\0&2&0&0&0&0\\2&0&0&0&0&2\end{pmatrix}.\]
To obtain this matrix, take the bases $(a_0,a_1,a_2)$ and $(a'_0,a'_1,a'_2)$ of the two copies of $\LL_3$ defining the normal quaternionic matrix of $\LL_3$ and consider the basis $(a_0a_0',a_1,a'_1,a'_2,a_2,a_0)$ of $\LL_3^2$.

For considering direct products where at least one factor has level $1$, the situation usually becomes simpler.
For example consider the structure $\LL_{1,0}\Delta\times \LL_3$ with
\[M_{\LL_{1,0}\Delta}=\begin{pmatrix} 0&1\\1&0\end{pmatrix} \quad\text{and}\quad 
M_{\LL_{1,0}\Delta\times \LL_3}=\begin{pmatrix} 1&0&0&0&0\\0&0&0&0&1\\0&0&0&2&0\\0&0&2&0&0\\0&1&0&0&0\end{pmatrix}.\]
\end{example}

\section{The Computation}\label{sec_Computation}

In this section, we present the algorithm applied to compute quaternionic matrices along with runtimes for some cases. 

We start with the following reduction technique.
Once the ETC has been confirmed for order up to $n - 1$, in order to verify the ETC for order $n$, it is enough to show the ETC for nondegenerate structures that are neither rigid nor pythagorean by \Cref{results on the etc}\ref{ETC_pythagorean} and \ref{ETC_iteration}.
Furthermore, depending on the level, we can also exclude some $2$-rigid cases.
To make use of this, we include the following results that show how to read these properties off normal quaternionic matrices.

\begin{corollary}\label{specifications nqm}
Let $S$ be a quaternionic structure of order $n$ with normal quaternionic matrix $M$. Then $S$ is nondegenerate and neither rigid nor pythagorean if and only if every row of $\overline{M}$ has at least $4$ and at most $2^{n-1}$ zeros.

In this case, we have $M_{0,1}=M_{1,0}=M_{1,1}=0$.
\end{corollary}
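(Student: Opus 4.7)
The plan is to read off each of the three conditions (nondegenerate, not pythagorean, not rigid) directly from the zero pattern of $\overline{M}$, by exploiting the bijection between rows of $\overline{M}$ and elements of $G$ afforded by \Cref{lem:properties_qm}\ref{lem:properties_qm1}. Fix a basis $\mathcal{A}=(a_0,\ldots,a_{n-1})$ of $S$ with $M_\mathcal{A}(S)=M$ and, for $I\subseteq\{0,\ldots,n-1\}$, write $a_I:=\prod_{i\in I}a_i$. Then $\overline{M}_{I,J}=q(a_I,a_J)$, so the zero entries of the $I$-th row of $\overline{M}$ correspond precisely to the $J$ with $a_J\in V(a_I)$. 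Since $I\mapsto a_I$ is a bijection onto $G$ and the row $I=\emptyset$ is trivially zero (it corresponds to $a_I=1$, where $V(1)=G$), the zero counts of the remaining $2^n-1$ rows of $\overline{M}$ are exactly the values $|V(a)|$ for $a\in G\setminus\{1\}$.

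Next, I would translate the three properties into inequalities on $|V(a)|$: nondegeneracy is equivalent to $V(a)\neq G$ for every $a\neq 1$, hence to $|V(a)|\leq 2^{n-1}$ for every such $a$ since $V(a)$ is a subgroup of the $2$-group $G$; $S$ is not pythagorean iff $|V(-1)|\geq 2$; and $S$ is not rigid iff no $a\in G$ satisfies $|V(a)|=2$. A brief case distinction — using the inclusion $\{1,-a\}\subseteq V(a)$ coming from \ref{QS1} and \Cref{lem:basic_properties_qs}\ref{lem:basic_properties_qs_2}, which already forces $|V(a)|\geq 2$ whenever $a\notin\{1,-1\}$, and treating the cases $-1=1$ and $-1\neq 1$ separately — shows that non-pythagorean and non-rigid together are equivalent to $|V(a)|\geq 4$ for every $a\neq 1$ (here we again use that $|V(a)|$ is a power of two). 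Combined with the nondegeneracy condition this yields $4\le|V(a)|\le 2^{n-1}$ for every $a\in G\setminus\{1\}$, which is exactly the claim on the zero counts of the nontrivial rows of $\overline{M}$; the converse implication is obtained by simply reading the argument backwards.

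For the postscript, $|V(a)|\geq 4$ for all $a\neq 1$ means $\dim V(a)\geq 2$ for all such $a$, and hence the invariant $x$ of \Cref{first row nqm} satisfies $x\geq 2$ — directly if $-1\neq 1$ and $x=\dim V(-1)$, and by taking the maximum over the other elements if $-1=1$. Plugging $x\geq 2$ into the first-row formula of \Cref{first row nqm} gives $M_{0,1}=0$ in both the $s\leq 2$ case (the row starts with at least $x\geq 2$ zeros) and the $s>2$ case (the row is $1$ followed by at least $x\geq 2$ zeros). Symmetry \ref{M2} then yields $M_{1,0}=0$, and \ref{M1} forces $M_{1,1}\in\{0,M_{0,1}\}=\{0\}$.

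The main obstacle is genuinely just careful bookkeeping around the special behaviour of $a\in\{1,-1\}$ and the split between the types $-1=1$ and $-1\neq 1$; once these are handled uniformly, the corollary is a direct consequence of \Cref{lem:properties_qm}\ref{lem:properties_qm1} and \Cref{first row nqm}.
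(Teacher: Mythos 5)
Your proof is correct and follows essentially the same route as the paper's: the zeros in the $I$-th row of $\overline{M}$ are counted by $|V(a_I)|$ via \Cref{lem:properties_qm}\ref{lem:properties_qm1}, the three properties are translated into the bounds $4\le|V(a)|\le 2^{n-1}$ for $a\neq 1$, and the postscript follows from \Cref{first row nqm} together with \ref{M1} and \ref{M2}. You are in fact slightly more careful than the paper in flagging the trivial row $I=\emptyset$ (which always has $2^n$ zeros and must be tacitly excluded from the statement) and in spelling out the case distinction between $-1=1$ and $-1\neq 1$.
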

\begin{proof}
The first part follows immediately from the definitions as the zeros in a row of $\overline{M}$ correspond $1-1$ to the elements of $V(a)$ for some $a$.
Considering \Cref{first row nqm} along with \ref{M1} and \ref{M2}, we get the second part of the assertion. 
\end{proof}

\begin{lemma}\label{level_nqm}
Let $S=(G,-1,q)$ be a nondegenerate quaternionic structure of order $n$ with normal quaternionic matrix $M$. 
We have $s(S)=1$ if and only if $M$ has only zeros on the diagonal.
Otherwise we have $s(S) = 2$ if and only if $M_{0,0}=0$.
Lastly, we have $s(S)=4$ if and only if $M_{0,0} = 1 = \overline{M}_{I, J}$ for some $I, J \subset \{0,\ldots, n - 1\}$ such that $\overline{M}_{\{0\}, I} = \overline{M}_{\{0\}, J} = 0$.
\end{lemma}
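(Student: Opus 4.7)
The plan is to treat the three equivalences in order of increasing difficulty. Parts (i) and (ii) are immediate from the definitions and axiom \ref{M1}; the content of the lemma lies in part (iii), which I would handle by a two-step unpacking of the inductive definition of isometry.

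For (i), $s(S)=1$ means $-1=1$ in $G$, which is precisely $M$ being of type $0$; by \ref{M1} this is equivalent to $M_{i,i}=0$ for all $i$ (the alternative that $M$ has type $1$ with all diagonal entries zero would force $M_{0,i}=0$ for every $i$, placing $-1\in \rad(S)$ and contradicting nondegeneracy). For (ii), $M_{0,0}=q(-1,-1)$ by construction, and $q(-1,-1)=0$ iff $-1\in V(-1)=D_S(2\times\qf{1})$ iff $s(S)\le 2$; combined with (i) this gives $s(S)=2$ iff $s(S)\neq 1$ and $M_{0,0}=0$.

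For (iii), by \Cref{lem:properties_qm}\ref{lem:properties_qm1} and \Cref{rem:Pfister_forms_and_fundamental_ideal}\ref{rem:Pfister_forms_and_fundamental_ideal1} the matrix condition is equivalent to $s(S)>2$ together with the existence of $a,b\in V(-1)$ satisfying $\Pfister{a,b}\cong \Pfister{-1,-1}$. Granting the classical fact that the level is always a power of $2$ or infinite (Pfister's theorem, which carries over via the roundness of Pfister forms), the problem reduces to showing $s(S)\le 4$ iff such $a,b$ exist. For ($\Leftarrow$), tensoring yields
\[\Pfister{-1,-1,-1}\cong \Pfister{-1}\otimes \Pfister{a,b}\cong \Pfister{-1,a}\otimes \Pfister{b},\]
which is hyperbolic because $q(-1,a)=0$ forces $\Pfister{-1,a}=0$ in $W(S)$; hence $s(S)\le 4$.

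The main obstacle is the ($\Rightarrow$) direction. Assume $s(S)=4$; then $\Pfister{-1,-1,-1}=2\Pfister{-1,-1}=0$ in $W(S)$, and roundness of $\Pfister{-1,-1}$ yields the isometry
\[\qf{1,1,1,1}\cong \qf{-1,-1,-1,-1}.\]
Applying the inductive definition of isometry to this four-dimensional iso produces an element $y\in G$ and a two-dimensional form $\chi$ with $\qf{1,-y}\cong \qf{-1,y}$, forcing $y\in V(-1)$, together with $\qf{-1,-1,-1}\cong \qf{y}\perp\chi$. Applying the inductive definition once more to this three-dimensional iso produces auxiliary elements $\gamma,\delta,\epsilon\in G$ with $\qf{-1,\gamma}\cong \qf{y,\delta}$ and $\qf{-1,-1}\cong \qf{\gamma,\epsilon}$. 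The second of these forces $\epsilon=\gamma$ and, via \Cref{lem:basic_properties_qs}, $q(-1,\gamma)=q(\gamma,\gamma)=q(-1,-1)$, so that $-\gamma\in V(-1)$; the first, combined with the identities $q(y,-1)=q(y,y)=0$ valid since $y\in V(-1)$ and the bilinearity of $q$, yields $q(y,\gamma)=q(-1,\gamma)=q(-1,-1)$. Setting $a':=y$ and $b':=-\gamma$, both of which lie in $V(-1)$, we obtain
\[q(a',b')=q(y,-\gamma)=q(y,-1)\ast q(y,\gamma)=q(-1,-1),\]
and expressing $a',b'$ in the fixed basis of $S$ furnishes the required subsets $I,J$. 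The technical work is the bookkeeping in the iterated inductive isometry, especially tracking which auxiliary elements land in $V(-1)$, in $-V(-1)$, or in $V(-y)$.
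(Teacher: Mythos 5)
Your proposal is correct, and parts (i) and (ii) coincide with the paper's (which simply invokes \ref{M1} and the equivalence $-1\in D(\qf{1,1})\iff q(-1,-1)=0\iff M_{0,0}=0$). For part (iii), however, you take a genuinely different route. The paper characterizes $s(S)=4$ as $M_{0,0}=1$ together with $-1\in D(\qf{1,1,1,1})$ and then unwinds the latter using the representation formula $D(\qf{1}\perp\varphi)=\bigcup_{a\in D(\varphi)}V(-a)$ from \cite[Proposition 2.10]{Marshall_AbstractWittRings}; this yields elements $a,b$ with $q(-1,-a)=q(a,-b)=q(-1,b)=0$, and a single bimultiplicativity identity, $q(-a,b)=q(-1,-a)\ast q(a,-b)\ast q(-1,b)\ast q(-1,-1)$, converts that chain condition into the stated matrix condition. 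You instead recast everything in terms of Pfister forms: the matrix condition becomes the existence of $a,b\in V(-1)$ with $\Pfister{a,b}\cong\Pfister{-1,-1}$, the backward direction is the slick factorization $\Pfister{-1,-1,-1}\cong\Pfister{-1,a}\otimes\Pfister{b}$ with $\Pfister{-1,a}$ hyperbolic, and the forward direction is a two-step unpacking of the inductive definition of isometry applied to $\qf{1,1,1,1}\cong\qf{-1,-1,-1,-1}$ — I checked the bookkeeping ($\epsilon=\gamma$, $-\gamma\in V(-1)$, $q(y,\gamma)=q(-1,-1)$) and it is sound. The trade-off: the paper's representation formula absorbs all the combinatorics of chain equivalence into one cited result, making the proof shorter, whereas your argument works directly from the isometry axioms but additionally leans on roundness of Pfister forms and on the level being a power of $2$ (facts which do hold in the abstract setting and which the paper's version also tacitly needs to rule out $s=3$).
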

\begin{proof}
The statements about level $\le 2$ are an immediate consequence of \ref{M1} and the fact that 
\[-1\in D(\langle 1,1\rangle)\iff q(-1,-1)=0\iff M_{0,0}=0.\]
We have $s(S)\ge 4$ if and only if $M_{0,0}=1$ as seen in \Cref{first row nqm}.
Hence, we have $s(S)=4$ if and only if $M_{0,0}=1$ and $-1\in D(\qf{1,1,1,1})$.
By a classical property which also holds in the abstract setting (cf. \cite[Proposition 2.10]{Marshall_AbstractWittRings}), we have
\[D(\qf{1,1,1,1})=\bigcup_{a\in D(\qf{1,1,1})} V(-a)=\bigcup_{a\in D(\qf{1,1,1})} \{x\mid q(-a,x)=0\},\]
\[D(\qf{1,1,1})=\bigcup_{b\in D(\qf{1,1})} V(-b)=\bigcup_{\{b\mid q(-1,b)=0\}} \{y\mid q(-b,y)=0\}.\]
This means that $-1\in D(\qf{1,1,1,1})$ if and only if there are $a,b\in G$ such that $-1\in V(-a)$, $a\in V(-b)$ and $b\in V(-1)$, i.e. 
\[q(-1,-a)=q(a,-b)=q(-1,b)=0.\]
Note that for any $a,b\in G$, we have 
\[q(-a,b)=q(-1,-a)\ast q(a,-b)\ast q(-1,b)\ast q(-1,-1)\] 
by the bimultiplicativity and symmetry of $q$.
Hence, we get $s(S)=4$ if and only if
\[q(-1,-1)=1 \text{ and } \exists a,b\in G:\ q(-1,-a)=q(a,-b)=q(-1,b)=0\]
if and only if
\[q(-1,-1)=1 \text{ and } \exists a,b \in G:\ q(-1,-a)=q(-1,b)=0,\ q(-a,b)=1.\]
In the construction of \Cref{prop:prequaternionic_matrices}, we get a basis $(a_0,\ldots,a_{n-1})$ of $G$ (with $a_0=-1$) such that the existence of $I,J$ as asserted is equivalent to the existence of $I,J$ such that $q(-1,\prod_{i\in I} a_i)=0=q(-1,\prod_{j\in J} a_j)$ and $q(\prod_{i\in I} a_i,\prod_{j\in J} a_j)=1$.
Thus, the assertion follows from setting $-a=\prod_{i\in I} a_i$ and $b=\prod_{j\in J} a_j$ above.
\end{proof}

\begin{remark}\label{excluded parameters}
The previous two results enable us to apply the results stated in \Cref{results on the etc} to exclude some cases.
We explicitly collect these here for a better overview.
To confirm the ETC for order $n$, whenever we know that the ETC holds for order up to $n - 1$, we can exclude quaternionic structures whose normal quaternionic matrix $M$ has one of the following properties:
\begin{itemize}
\item There is a row of the completion $\overline{M}$ that has only zeros or fewer than $4$ zeros, cf. \ref{results on the etc} \ref{ETC_pythagorean}, \ref{ETC_iteration} \ref{ETC_iteration_Deg} and \ref{ETC_iteration_Rigid}.
\item We have $M_{n-1,n-1}=0$ (i.e. level $1$) and any row of the completion has fewer than $8$ zeros, cf. \ref{results on the etc} \ref{ETC_iteration} \ref{ETC_iteration_s1_2rigid}.
\item We have $M_{0,0}=0$ (i.e. level at most $2$) and any row of the completion starting with $0$ has fewer than $8$ zeros, following from \ref{results on the etc} \ref{ETC_iteration} \ref{ETC_iteration_s2_2rigid}.
\item We have $M_{0,0}=M_{1,2}=1$ and $M_{0,3}=2$ or $n=3$. This means that $-1$ is $2$-rigid and we have level $4$, cf. \ref{results on the etc} \ref{ETC_iteration} \ref{ETC_iteration_s4_2rigid}.
This corresponds to the parameters (cf. \Cref{second row nqm}) $d=n-3$, $f=1$, $r=1$ arbitrary $v,t,u$. Note that in this case, we always have $v=n-2$ by the restrictions presented in \Cref{specifications parameters}.
\end{itemize}
\end{remark}

Our computations prove our main result as follows. For computation times, we refer to \Cref{computation time}.

\begin{theorem}\label{ETC}
All quaternionic structures of order up to $7$ for which the quaternions naturally embed into the abstract $2$-Brauer group are of elementary type.
In particular, all Witt rings of fields with up to $2^7$ square classes are of elementary type.
\end{theorem}
\begin{proof}
By the results of Carson and Marshall as cited in \Cref{results on the etc}\ref{ETC5}, every quaternionic structure of order up to $5$ is of elementary type.
Hence, to prove the ETC for order $6$, it suffices to compute the correct upper bound for the nonpythagorean, nonrigid and nondegenerate normal quaternionic matrices, also making use of some results in \Cref{results on the etc}\ref{ETC_pythagorean} and \ref{ETC_iteration}.
By \Cref{number of small ets} there are $81=\sum_{k>1} e_k(6)$ such quaternionic matrices of order $6$.
One readily checks that among those, there are $10$ of level $1$, $28$ of level $2$ and $43$ of level at least $4$, see \ref{sec_ListNQM}.
    
By \Cref{results on the etc}\ref{ETC_iteration}, we can further exclude all $2$-rigid structures of level $1$, all structures of level $2$ where an element in $D(\qf{1,1})$ is $2$-rigid and all structures of level $4$ where $1$ is a $2$-rigid element.
Regarding level $1$, there are $7$ strictly $2$-rigid nondegenerate structures since these are exactly the direct products of two group extensions, making use of \Cref{lem:rigidity_ets} and the normalized decomposition \Cref{normalized decomposition}.
With similar counting methods, one obtains that among the 28 structures with level $2$, nine have a $2$-rigid element in $D(\qf{1,1})$.
Regarding level $4$, there are no further exclusions since the elementary type structures of level $4$ where $1$ is $2$-rigid are exactly $\mathbb{L}_3$ of order $3$ and its iterated group extensions which are rigid by \Cref{lem:rigidity_ets}\ref{lem:rigidity_ets2}
    
Applying this, we verified the ETC for order $6$ by running \Cref{alg:ClassificationAlgorithm} with the above restrictions, computing that there are at most $3$ resp. $19$ resp. $43$ normal quaternionic matrices of order $6$ and level $1$ resp. $2$ resp. at least $4$.
    
The argument for quaternionic structures of order $7$ is the same, building further on the previously stated results for order $6$. 
Here, the lower bound on the number of normal quaternionic matrices given by the elementary type structures further fulfilling the restrictions coming from \Cref{results on the etc} is $4$ resp. $65$ resp. $199$ for level $1$ resp. $2$ resp. at least $4$. 
Again, these numbers are obtained as upper bounds by \Cref{alg:ClassificationAlgorithm}.

The additional statement now follows by the equivalence of the category of quaternionic structures and the category of Witt rings \cite[Theorem 4.5]{Marshall_AbstractWittRings} together with Merkurjev's theorem \cite{Merkurjev_KTheory}.
\end{proof}

We briefly outline some pseudo code below in \Cref{alg:ClassificationAlgorithm} to classify normal quaternionic matrices and give some background information in the following paragraphs.
The complete Python script along with a demo notebook, documentation, and examplary output can be found in the Zenodo repository \cite{Repository}.


In the program, quaternionic matrices are created row by row and via depth search in order to minimize the data that has to be stored simultaneously.
To keep track of the current object, we save our current data in an $n \times n$ matrix $M$ whose entries are tuples.
The first entry of the tuple in $M(i, j)$ is a partial quaternionic matrix $A$ with entries up to position $(i, j)$ and the second entry of $M(i, j)$ is a list of possible extensions at index $(i, j+1)$ of $A$ that still have to be handled.
To slim down the possible extensions of $A$, we make use of \Cref{exclude factors}.
We apply functions that increase resp. decrease $(i, j)$ to the next resp. last index, taking into considerations that elements of a quaternionic matrix up to the diagonal of a given row are already determined by previous entries.
These are called \emph{Append} resp. \emph{DeleteLastEntry} in \Cref{alg:ClassificationAlgorithm}.

Quaternionic matrices are saved as a class object, among their attributes are a partial completion, the unique values up to the last computed element and their parameters. 
Whenever a transformation is found that produces a smaller reduced matrix, or a violation of the common slot property \ref{M3} or another assumption (in particular: rigidity) is detected, the current class object is discarded. 
This will be called \emph{validity check} in the following pseudo code, a function returning either \emph{true} or \emph{false} depending on whether the object under consideration is valid or not respectively. 
Whenever an object is discarded, the program continues with the last object that has not yet been discarded. 
Note that we only consider selected transformations as the exponentially growing number of possible transformations takes up too much computation time to be practical.
As explained before in the proof of \Cref{ETC}, it suffices that the resulting list contains all normal quaternionic matrices of the considered properties and the number of those matrices is exactly the number of elementary types with those properties.
We distinguish the cases of quaternionic matrices with level $1$, level $2$, and level at least $4$. 
To properly deal with this distinction, for $s_\text{range}\subseteq \{1,2,4\}$, we define 
\[\overline{s_\text{range}}:=
\begin{cases} 
s_\text{range} & \text{ if } 4\not\in s_\text{range}\\ s_\text{range}\cup\{2^k\mid k\ge 2\}\cup\{\infty\} & \text{ else.} \end{cases}\]
Given $s \in s_\text{range}$, we consider \emph{suitable parameters} as sextuples $(d,f,r,v,t,u)$ with notation and restrictions as in \Cref{second row nqm} and \Cref{specifications parameters} to determine the first two rows of the partial quaternionic matrices to compute.

\begin{algorithm}
    \caption{Classification of Quaternionic Structures}\label{alg:ClassificationAlgorithm}
    \begin{algorithmic}
        \Require $n \in \N_{\geq 2}$, $s_\text{range} \subseteq \{1, 2, 4\}$
        \Ensure List $L$ containing all normal quaternionic matrices of order $n$ and level in $\overline{s_\text{range}}$ excluding those mentioned in \Cref{excluded parameters}. 
        \\
        \Function{Classify Quaternionic Structures}{$n, s_\text{range}$}
            \State{$L = \emptyset$}
            \ForAll{$s \in s_\text{range}$}
                \ForAll{suitable parameters $p = (d,f,r,v,t,u)$}
                    \State{$A \gets$ partial quaternionic matrix in $M_{2, n}(\N_0)$ for $p$}
                    \State{$X \gets \{x \in \N_0 \mid A \text{ can be extended with } x$\}}
                    \While{\Call{Validity Check}{$A$} = \True}
                        \If{$A \in M_{n \times n}(\N_0)$}
                            \State{$L \gets L \cup \{A\}$}
                    
                        \ElsIf{$X \neq \emptyset$}
                            \State{take $x \in X$; $A \gets \Call{Append}{A, x}$; $X \gets X \setminus \{a\}$}
                        \Else
                            \State{$A \gets \Call{DeleteLastEntry}{A}$}
                        \EndIf
                    \EndWhile
                \EndFor
            \EndFor
            \State{$L \gets \{A \in L \mid A \text{ is reduced}\}$}
            
            \State{\Return{$L$}}
        \EndFunction
    \end{algorithmic}
\end{algorithm}

\begin{remark}
    In this remark, we highlight some computational aspects.
    \begin{itemize}
        \item As mentioned before, it was not possible to check all applicable transformations during the validity check to assure the minimality of the constructed quaternion matrix a priori.
        But since we have a lower bound on the number of abstract Witt rings of a given order by \Cref{number of small ets}, we could conclude the minimality of the quaternion matrices a posteriori by showing the effectiveness of the lower bound, implying that the computed matrices are in fact normal quaternion matrices.
        \item While developing the code, we constantly extended the set of considered transformations by inspecting provisional results of earlier versions and identifying matrices in the output that are not normal quaternionic matrices.
        We further considered partial quaternionic matrices appearing as intermediate results to find additional transformations.
        By doing so, we established a list of transformations mainly consisting of products of few simple transformations.
        \item Instead of applying transformations explicitly, we aimed to use transformations implicitly in order to exclude possible extensions of a given partial matrix as soon as possible, leading to a better performance.
        The theoretical background is given in \Cref{exclude factors}, mainly in parts \ref{exclude factors 1} and \ref{exclude factors 2}.
        As another example of implicit transformations, we implemented \emph{first row checks} and \emph{second row checks}.
        Since the parameters introduced in \Cref{second row nqm} are unique, we can check whether permuting the basis element corresponding to the current row to the beginning of the basis would result in another set of parameters yielding a lexicographically smaller matrix.
        \item The common slot property \ref{M3} can in general be checked only for completed rows.
        Nevertheless, we implemented some rules to exclude possible next entries or to discard matrices early.
        These work along the lines of \Cref{exclude factors}\ref{exclude factors 3}.
        \item The (partial) completion was not saved as a matrix but as a list of dictionaries, in order to check the common slot property \ref{M3} efficiently.
        We save a dictionary for each row, and the entry for the key $k$ in dictionary $i$ contains a list of those $j$, for which $\overline M_{i, j} = k$.
        The common slot check for rows $i, i'$ then boils down to showing that for each common key $k$ in the dictionaries belonging to $i, i'$, the corresponding lists have nonempty intersection.
    \end{itemize}
\end{remark}

In the following remark, we discuss the runtime and our approaches to shorten the runtime, which are based on parallelization techniques.

\begin{remark}\label{computation time}
    In order to tap the full potential of a modern computer architecture with multiple cores, we parallelized some of our calculations.
    This is not yet necessary for the case of order $6$ which was computed in about 14 seconds without applying any parallelization.
    However, dealing with the case of order $7$, the runtimes get tremendously large as can be expected due to the exponential growth both of the number of cases to be considered and the size of the completion of the quaternionic matrices.

    To have an overview consisting of comparable results, we include the runtimes for various cases in \Cref{tab_runtime}, where those for $n = 7, s \in \{2, 4\}$ are not obtained from a single run but as the sum of all partial executions, further assuming that no computations have been executed in parallel.

    Our actual implementation of \Cref{alg:ClassificationAlgorithm} was written in Python 3.12.4.
    The cases in which $n \leq 6$ or $s < 2$ as well as most of the parameters with $n = 7$ and $s = 4$ were run on a MacBook Pro with Apple M4 Pro Chip and 48 GB RAM.
    The computations for the input parameters that used the most time were computed using the HPC cluster Elysium of the Ruhr-Universität Bochum.

    \begin{table}[ht!]
        \renewcommand{\arraystretch}{1.25} 
        \centering
        \begin{tabular}{|c||c|c|c|c|}
            \hline
            \backslashbox{$s$}{$n$} & 4 & 5 & 6 & 7\\
            \hline \hline
            1 & $0\, \second$ & $0\, \second$ & $0\, \second$ & $12\, \minute\,  36\, \second$\\
            \hline
            2 & $0\, \second$ & $0\, \second$ & $2\, \second$ & $3$ days $15\, \hour$ \\
            \hline
            4 & $0\, \second$ & $0\, \second$ & $12\, \second$ & $204\, \text{days} $ \\
            \hline
        \end{tabular}
        \caption{runtimes}
        \label{tab_runtime}
    \end{table}

    We now point out the parallelization strategies used to deal with the case $n = 7$. 
    In a first step, for each choice of $s \in \{1, 2, 4\}$, we applied the algorithm parallelized for the different choices of parameters $s, d, f, r, t, u, v$, i.e. the different choices of the first two rows of the normal quaternionic matrices, as presented in \Cref{second row nqm}.
    For level $s = 1$, there were $41$ parameters to consider, the most expensive one of them having a computation time of roughly $7$ minutes.

    For level $s = 2$ resp. $s \ge 4$, the number of parameters to consider is $72$ resp. $196$. 
    Among these, there were $3$ resp. $12$ with a computation time of at least $5\, \hour$ whose isolated runtimes (again assumed to have run on a single core) can be found in \Cref{table_runtime_difficult_parameters}.
    This illustrates how most of the computation time focuses on a very small number of parameters.

    \begin{table}[ht!]
\begin{centering}
    \begin{tabular}{|l|r|}
        \hline
        parameter & runtime\\
        \hline \hline
        $s = 2$:&\\
        $(4, 1, 1, 4, 1, 1)$ & $2$ days, $11\hour$\\
        $(4, 1, 0, 4, 1, 1)$ & $15\hour$\\
        $(4, 1, 0, 4, 2, 2)$ & $11\hour$\\
        remaining $69$ parameters & $2\hour$\\
        \hline \hline 
        $s \ge 4$:&\\
        $(4, 1, 0, 4, 1, 1)$ & $85$ days, $5\hour$\\  
        $(4, 2, 1, 4, 1, 1)$ & $71$ days, $16\hour$\\
        $(4, 1, 0, 4, 2, 2)$ & $40$ days, $7\hour$\\
        $(4, 1, 0, 4, 2, 3)$ & $1$ day, $15\hour$\\
        $(3, 1, 0, 3, 0, 0)$ & $1$ day, $9\hour$\\
        $(3, 1, 0, 3, 1, 1)$ & $21\hour$\\
        $(4, 1, 0, 4, 3, 3)$ & $10\hour$\\
        $(3, 1, 0, 3, 0, 1)$ & $9\hour$\\
        $(4, 1, 0, 4, 0, 1)$ & $9\hour$\\
        $(4, 1, 0, 4, 0, 0)$ & $8\hour$\\
        $(2, 1, 0, 3, 0, 0)$ & $7\hour$\\
        $(4, 1, 0, 4, 1, 2)$ & $5\hour$\\
        remaining $184$ parameters & $19\hour$\\
        \hline
    \end{tabular} 
    \caption{parameters with runtimes of at least $5 \hour$ for $n = 7$ and $s = 2$, $s\ge 4$}
    \label{table_runtime_difficult_parameters}
\end{centering}
    \end{table}

    For these parameters, we applied further parallelization techniques which are explained in detail in \Cref{Ex_parallelization_parameter}. 
\end{remark}

\begin{example}\label{Ex_parallelization_parameter}
    We use the case $n = 7$, $s \ge 4$ with parameters $(4, 1, 0, 4, 1, 1)$ to illustrate our parallelization strategies for those parameters, for which the computation of the associated normal quaternionic matrices needed the most time.
    This strategy was applied for all parameters with runtimes of more than $24$ hours and - during the process of adapting the algorithm - also for some of the other parameters and for which we also established unparallelized results. 
    A detailed overview of this process can also be found in the repository \cite{Repository}.
    
    In the implementation of \Cref{alg:ClassificationAlgorithm}, we added the option to have a list of partial quaternionic matrices (with all their class attributes) as an input from which we can resume the computation to complete the given partial matrices.
    Further, built up from auxiliary functions used for debugging, we implemented the option to only compute all quaternionic matrices for a given input (either a set of parameters of a list of incomplete quaternionic matrices as just explained) just up to a certain entry $(i, j)$ specified in the code.
    The advantage of this is that it is possible to partition the case into many sub cases which can be computed on different kernels as they have no interdependence.
    In practice, it was possible for us to compute up to approx. 60000 cases in one run, resulting in .obj-files of about 3 GB, serving as the input for a future run.
    
    For the example under consideration, we get the first two rows of the normal quaternionic matrix as follows:
    \[\begin{pmatrix}
        1 & 0 & 0 & 2 & 4 & 8 & 16 \\
        0 & 0 & 0 & 0 & 32 & 64 & 128
    \end{pmatrix}\]
    In a first step, $16$ partial quaternionic matrices up to entry $(2,3)$ which is the first entry not determined by the symmetry and the diagonal conditions were calculated.
    While $11$ of those cases could be computed entirely in a few seconds, the other $5$ cases, namely
    \[\begin{pmatrix}
        1 & 0 & 0 & 2 & 4 & 8 & 16 \\
        0 & 0 & 0 & 0 & 32 & 64 & 128 \\
        0 & 0 & 0 & x &&&
    \end{pmatrix}\]
    for $x\in \{0, 2, 33, 40, 256\}$, were partially completed up to a point to reach a number of cases that is high enough to ensure efficient parallelization but without exceeding sensible limits for storage.
    
    Once again, we only illustrate the most complicated case which proved to be $x = 256$ above; some discussions on which (sub) cases are the most complex and how it might be possible to reduce computation times with more theoretic input can be found in the following \Cref{rem_computation_difficulties}.
    
    The algorithm computed $41172$ cases up to entry $(3, 4)$.
    All but $10$ of those were computed without any further intermediate steps, most of them with case times of under a minute, but some with case times of a few hours as well.
    The remaining $10$ cases were again only partially computed up to entry $(3, 5)$ or $(4, 4)$, each dividing into $300$ to $60000$ subcases which could all be computed in a few hours at most.
    
    For each case, a result file was produced, including the runtime of the case, the partial quaternionic matric used as the input, and the number of results of quaternionic matrices.
    In the given example, a simple routine checked all the result files to add the number of quaternionic matrices computed and also the different case times.
    As conjectured in the ETC, no quaternionic matrices were computed while the computation times for the parameter considered added up to $85$ days and $5$ hours, see \Cref{table_runtime_difficult_parameters}.
\end{example}

\begin{remark}\label{rem_computation_difficulties}
    While executing the computations for quaternionic structures of order $n = 7$, we reached the limits of what is computable in reasonable time.
    We identified the following problems:
    \begin{itemize}
        \item 
            In most of the cases, high powers of 2 led to big additional effort.
            This comes from the obvious fact that given a partial quaternionic matrix in which $2^0, \ldots 2^k$ occur, the next entry has to be chosen in $\{0, \ldots, 2^{k+1}\}$.
            Hence, introducing the next power of 2 results in doubling the possibilities for all upcoming entries of the matrix.
            It would be thus desirable to have stronger conditions on possible next entries, especially with respect to excluding 2-powers in the sense of \Cref{exclude factors}.
        \item 
            So far, we do not have criteria to identify nonreduced matrices while computing them.
            Instead, we excluded them when already having complete matrices, so after the computational effort is already done.
            E.g. for $n = 7, s = 4$, we computed the following matrices, both of them determining the same quaternionic structure $\mathbb{L}_{1,0}\Delta\times (\mathbb{L}_1\times \mathbb{L}_{1,1}\Delta)\Delta_2$, but only the second one is reduced (also cf. \Cref{ex:linkage_matrices}):
            \[\begin{pmatrix} 
                1&0&0&0&2&4&8\\ 
                0&0&0&0&2&16&32\\
                0&0&0&64&0&0&0\\
                0&0&64&0&0&0&0\\
                2&2&0&0&2&33&128\\
                4&16&0&0&33&4&256\\
                8&32&0&0&128&256&8
                \end{pmatrix}, \  
            \begin{pmatrix}
                1&0&0&0&2&4&8\\
                0&0&0&0&2&16&32\\
                0&0&0&64&0&0&0\\
                0&0&64&0&0&0&0\\
                2&2&0&0&2&128&256\\
                4&16&0&0&128&4&512\\
                8&32&0&0&256&512&8
            \end{pmatrix}.\]
            Even though the computation time for the nonreduced matrix is less than the time for the reduced matrix (mostly because it consists of smaller entries), it costs a lot of effort which would be desirable to exclude beforehand using a theoretical argument. 
            These 'copies' of isomorphic structures also seem to appear in intermediate results in a great number, even if they cannot be extended to a normal quaternionic matrix. 
            This appears to be a major reason for the extreme computation times in certain cases.
        \item 
            From a purely computational point of view, it does not seem to be obvious a priori which parameters (or subcases thereof) need the most runtime.
            It appears that the parameters for which the computation of associated normal quaternionic matrices take the most time usually have large $d$ and small values for $v-d$ but some parameters that satisfy these conditions could be handled with surprising ease.
            Additionally, while the 'critical' parameters usually satisfy $r=0$, there was one exception for order $7$ and level $4$, namely $(4, 2, 1, 4, 1, 1)$, which proved to be the second most time consuming parameter.
            Criteria to identify these critical parameters and their most time consuming subcases would enable us to apply automated strategies for parallelization of calculations, instead of relying on manual inspections for each case.
        \item 
            Aside from the limitations due to the common slot property, the only condition that enables us to exclude new powers of $2$ as the next entry comes from excluding rigid and in some cases $2$-rigid elements.
            Some test runs indicate that excluding $2$-rigid elements entirely would massively reduce the computation time for $n = 7$, $s \ge 4$ to about $1$ day.
            Hence, it would be of great help to generalize the results obtained by Carson and Marshall resp. Cordes on $2$-rigid elements, which are mentioned in \Cref{results on the etc}.
    \end{itemize}
\end{remark}

We conclude this article with a comparison to the most recent classification result.

\begin{remark}
As mentioned before, Carson and Marshall first computationally verified the ETC for order $5$ in their 1982 article \cite{CarsonMarshall_Decomposition}.
To compute all abstract Witt rings for a given square class group $G$, they consider the value groups $V(a)$ for $a\in G$.
There are some similarities to the approach presented here, as these value groups can be recovered by considering the zeros in the rows of the completion $\overline{M}$ of a given quaternionic matrix in the following way:
if $a\in G$ is associated with row and column $k_a$ in the completion, the set $V(a)$ consists of all $b\in G$ such that $\overline{M}_{k_a, k_b}=0$ for $b$ associated with row and column $k_b$.

While we were not able to recreate the computations done by Carson and Marshall due to sparse documentation, it appears that our possibilities for early \emph{validity checks} as described before enhance efficiency. 
However, we cannot present any quantitative comparisons with the computation times achieved by Carson and Marshall.
\end{remark}

\section*{Acknowledgments}
Significant parts of the calculations for this publication were performed on the HPC cluster Elysium of the Ruhr University Bochum, subsidised by the DFG (INST 213/1055-1).
The authors would like to thank the anonymous referee who provided useful and detailed comments on a previous version of the manuscript.

\appendix\label{appendix}
\section{Lists of Normal Quaternionic Matrices}\label{sec_ListNQM}

We include a list of all normal quaternionic matrices of all quaternionic structures of order $6$ that are nondegenerate, nonpythagorean and nonrigid. 
Instead of displaying the entire matrices, we only include the upper right triangle (excluding the diagonal) as this uniquely describes the quaternionic matrix (along with the level) and indicate line breaks by a semicolon. 
For the same level, the matrices are sorted lexicographically.

In \Cref{results on the etc}, we listed several criteria for a quaternionic structure to be of elementary type, leading to additional restrictions for those normal quaternionic structures that have to be classified, see \Cref{excluded parameters}.
As pointed out above, \Cref{alg:ClassificationAlgorithm} did not compute the quaternionic structures not fulfilling those additional properties in order to reduce the runtime.
This implies that the following list contains normal quaternionic matrices that are not computed by \Cref{alg:ClassificationAlgorithm}; those are marked with an asterisk.

\begin{center}
\footnotesize
\emph{normal quaternionic matrices of order 6, level 1}\\
(nondegenerate, nonpythagorean, nonrigid)
\begin{tabular}{|lr|}
$\LL_{1,0}\Delta\times\LL_{1,0}^3\Delta$&(0,0,0,0,1; 0,0,0,2; 0,0,4; 8,0; 0)$^\ast$ \\
$\LL_{1,0}^2\Delta^2$&(0,0,0,0,1; 0,0,0,2; 0,4,0; 8,0; 0)$^\ast$\\
$\LL_{1,0}^2\Delta\times\LL_{1,0}\Delta_2$&(0,0,0,0,1; 0,0,0,2; 4,8,0; 16,0; 0)$^\ast$\\
$\LL_{6,0}$&(0,0,0,0,1; 0,0,1,0; 1,0,0; 0,0; 0)\\
$\LL_{1,0}\Delta\times \LL_{4,0}$&(0,0,0,0,1; 0,0,1,0; 2,0,0; 0,0; 0)\\
$\LL_{1,0}\Delta^3$&(0,0,0,0,1; 0,0,2,0; 4,0,0; 0,0; 0)\\
$\LL_{1,0}\Delta\times(\LL_{1,0}\times\LL_{1,0}\Delta)\Delta$&(0,0,0,0,1; 0,0,2,0; 4,0,8; 0,16; 0)$^\ast$\\
$\LL_{1,0}\Delta\times\LL_{1,0}^2\Delta_2$&(0,0,0,0,1; 0,2,4,0; 8,16,0; 32,0; 0)$^\ast$\\
$\LL_{1,0}\Delta\times\LL_{1,0}\Delta_3$&(0,0,0,0,1; 2,4,8,0; 16,32,0; 64,0; 0)$^\ast$\\
$\LL_{1,0}\Delta_2^2$&(0,0,0,1,2; 4,8,0,0; 16,0,0; 0,0; 32)$^\ast$
\end{tabular}
\newpage
\emph{normal quaternionic matrices of order 6, level 2}\\
(nondegenerate, nonpythagorean, nonrigid)
\begin{tabular}{|lr|}
$\LL_{1,0}\Delta\times\LL_{1,1}^3\Delta$&(0,0,0,0,1; 0,0,0,2; 0,0,4; 8,0; 0)\\
$\LL_{1,0}^2\Delta\times\LL_{1,1}^2\Delta$&(0,0,0,0,1; 0,0,0,2; 0,4,0; 8,0; 0)\\
$\LL_{1,0}\Delta_2\times\LL_{1,1}^2\Delta$&(0,0,0,0,1; 0,0,0,2; 4,8,0; 16,0; 0)\\
$\LL_{6,1}$&(0,0,0,0,1; 0,0,1,0; 1,0,0; 0,0; 0)\\
$\LL_{1,0}\Delta\times\LL_{4,1}$&(0,0,0,0,1; 0,0,1,0; 2,0,0; 0,0; 0)\\
$\LL_{1,1}\Delta\times\LL_{1,0}^3\Delta$&(0,0,0,0,1; 0,0,2,0; 0,4,0; 8,0; 0)$^\ast$\\
$\LL_{1,1}\Delta\times\LL_{4,0}$&(0,0,0,0,1; 0,0,2,0; 2,0,0; 0,0; 0)\\
$\LL_{1,0}\Delta^2\times\LL_{1,1}\Delta$&(0,0,0,0,1; 0,0,2,0; 4,0,0; 0,0; 0)\\
$\LL_{1,0}\Delta\times(\LL_{1,1}\times\LL_{1,0}\Delta)\Delta$&(0,0,0,0,1; 0,0,2,0; 4,0,8; 0,16; 0)\\
$\LL_{1,1}\Delta\times(\LL_{1,0}\times\LL_{1,0}\Delta)\Delta$&(0,0,0,0,1; 0,0,2,0; 4,8,0; 16,0; 0)$^\ast$\\
$\LL_{1,1}\Delta\times\LL_{1,0}^2\Delta_2$&(0,0,0,0,1; 0,2,4,0; 8,16,0; 32,0; 0)$^\ast$\\
$\LL_{1,1}\Delta\times\LL_{1,0}\Delta_3$&(0,0,0,0,1; 2,4,8,0; 16,32,0; 64,0; 0)$^\ast$\\
$\LL_{1,1}\Delta\times\LL_{1,1}^3\Delta$&(0,0,0,1,2; 0,0,0,2; 0,0,4; 0,8; 0)\\
$\LL_{1,1}^2\Delta^2$&(0,0,0,1,2; 0,0,0,2; 0,0,4; 8,0; 0)\\
$\LL_{1,1}\Delta\times\LL_{4,1}$&(0,0,0,1,2; 0,0,0,2; 1,0,0; 0,0; 0)\\
$\LL_{1,0}\Delta\times\LL_{1,1}\Delta^2$&(0,0,0,1,2; 0,0,0,2; 4,0,0; 0,0; 0)\\
$\LL_{1,1}\Delta\times(\LL_{1,1}\times \LL_{1,0}\Delta)\Delta$&(0,0,0,1,2; 0,0,0,2; 4,0,8; 0,16; 0)\\
$\LL_{1,0}\Delta\times(\LL_{1,1}\times\LL_{1,1}\Delta)\Delta$&(0,0,0,1,2; 0,0,0,4; 8,0,0; 0,0; 16)\\
$\LL_{1,0}\Delta\times\LL_{1,1}^2\Delta_2$&(0,0,0,1,2; 0,0,4,8; 16,0,0; 0,0; 32)\\
$\LL_{1,0}^2\Delta\times\LL_{1,1}\Delta_2$&(0,0,0,1,2; 0,4,0,0; 8,0,0; 0,0; 16)$^\ast$\\
$\LL_{1,0}\Delta_2\times\LL_{1,1}\Delta_2$&(0,0,0,1,2; 4,8,0,0; 16,0,0; 0,0; 32)$^\ast$\\
$\LL_{1,1}^2\Delta\times\LL_{1,1}\Delta_2$&(0,0,1,2,4; 0,0,0,4; 0,0,8; 16,0; 0)\\
$\LL_{1,1}\Delta^3$&(0,0,1,2,4; 0,0,0,4; 0,2,0; 0,0; 0)\\
$\LL_{1,1}\Delta\times(\LL_{1,1}\times\LL_{1,1}\Delta)\Delta$&(0,0,1,2,4; 0,0,0,4; 0,8,0; 16,0; 0)\\
$\LL_{1,1}\Delta\times\LL_{1,1}^2\Delta_2$&(0,0,1,2,4; 0,0,0,4; 8,16,0; 32,0; 0)\\
$\LL_{1,0}\Delta\times\LL_{1,1}\Delta_3$&(0,0,1,2,4; 8,0,0,0; 0, 0,0; 16, 32; 64)$^\ast$\\
$\LL_{1,1}\Delta\times\LL_{1,1}\Delta_3$&(0,1,2,4,8; 0,0,0,8; 16,32,0; 64,0; 0)$^\ast$\\
$\LL_{1,1}\Delta_2^2$&(0,1,2,4,8; 0,0,4,8; 16,0,0; 0,0; 32)$^\ast$
\end{tabular}
\newpage
\emph{normal quaternionic matrices of order 6, level $>2$}\\ 
(nondegenerate, nonpythagorean, nonrigid)
\begin{tabular}{|lr|}
$\LL_3\times\LL_{1,0}^2\Delta$&(0,0,0,0,0; 0,0,0,1; 0,2,0; 4,0; 0)\\
$\LL_3\times \LL_{1,0}\Delta_2$&(0,0,0,0,0; 0,0,0,1; 2,4,0; 8,0; 0)\\
$\LL_1\times \LL_{1,0}\Delta\times \LL_{1,0}^2\Delta$&(0,0,0,0,0; 0,0,0,2; 0,0,4; 8,0; 0)\\
$\LL_1\times \LL_{1,0}\Delta\times \LL_{1,0}\Delta_2$&(0,0,0,0,0; 0,0,0,2; 4,8,0; 16,0; 0)\\
$\LL_3\times \LL_{1,1}^2\Delta$&(0,0,0,0,2; 0,0,0,2; 0,0,4; 1,0; 0)\\
$\LL_1\times \LL_{1,0}\Delta\times\LL_{1,1}^2\Delta$&(0,0,0,0,2; 0,0,0,2; 0,0,4; 8,0; 0)\\
$\LL_1\times\LL_{1,1}\Delta\times\LL_{1,0}^2\Delta$&(0,0,0,0,2; 0,0,0,2; 0,4,0; 8,0; 0)\\
$\LL_1\times \LL_{1,1}\Delta\times\LL_{1,0}\Delta_2$&(0,0,0,0,2; 0,0,0,2; 4,8,0; 16,0; 0)\\
$\LL_{1,0}\Delta\times (\LL_{1,1}^2\times\LL_1)\Delta$&(0,0,0,0,2; 0,0,0,4; 0,0,8; 16,0; 0)\\
$\LL_{1,0}^2\Delta\times (\LL_{1,1}\times\LL_1)\Delta$&(0,0,0,0,2; 0,0,0,4; 0,8,0; 16,0; 0)\\
$(\LL_{1,1}\times \LL_1)\Delta\times\LL_{1,0}\Delta^2$&(0,0,0,0,2; 0,0,0,4; 8,16,0; 32,0; 0)\\
$\LL_{1,0}\Delta\times\LL_3\Delta$&(0,0,0,0,2; 0,0,1,4; 8,0,0; 0,0; 16)\\
$\LL_1\times\LL_5$&(0,0,0,0,2; 0,0,2,0; 2,0,0; 0,0; 0)\\
$\LL_3^2$&(0,0,0,0,2; 0,0,2,0; 3,0,0; 0,0; 0)\\
$\LL_1\times\LL_3\times\LL_{1,0}\Delta$&(0,0,0,0,2; 0,0,2,0; 4,0,0; 0,0; 0)\\
$\LL_1^2\times\LL_{4,0}$&(0,0,0,0,2; 0,0,4,0; 4,0,0; 0,0; 0)\\
$\LL_1^2\times\LL_{1,0}\Delta^2$&(0,0,0,0,2; 0,0,4,0; 8,0,0; 0,0; 0)\\
$\LL_{1,0}\Delta\times(\LL_1\times\LL_{1,0}\Delta)\Delta$&(0,0,0,0,2; 0,0,4,0; 8,0,16; 0,32; 0)\\
$\LL_1\times\LL_{1,1}\Delta\times\LL_{1,1}^2\Delta$&(0,0,0,2,4; 0,0,0,4; 0,0,8; 2,0; 0)\\
$\LL_{1,1}^2\Delta\times(\LL_{1,1}\times\LL_1)\Delta$&(0,0,0,2,4; 0,0,0,4; 0,0,8; 16,0; 0)\\
$\LL_{1,1}\Delta\times(\LL_{1,1}^2\times\LL_1)\Delta$&(0,0,0,2,4; 0,0,0,4; 0,8,0; 16,0; 0)\\
$\LL_{1,1}\Delta\times\LL_3\Delta$&(0,0,0,2,4; 0,0,0,4; 1,8,0; 16,0; 0)\\
$\LL_1\times\LL_{1,1}\Delta\times\LL_3$&(0,0,0,2,4; 0,0,0,4; 2,0,0; 0,0; 0)\\
$\LL_1^2\times\LL_{4,1}$&(0,0,0,2,4; 0,0,0,4; 4,0,0; 0,0; 0)\\
$\LL_1^2\times\LL_{1,0}\Delta\times\LL_{1,1}\Delta$&(0,0,0,2,4; 0,0,0,4; 8,0,0; 0,0; 0)\\
$\LL_{1,1}\Delta\times(\LL_1\times\LL_{1,0}\Delta)\Delta$&(0,0,0,2,4; 0,0,0,4; 8,16,0; 32,0; 0)\\
$\LL_3\times(\LL_{1,1}\times\LL_1)\Delta$&(0,0,0,2,4; 0,0,0,8; 2,0,0; 0,0; 0)\\
$\LL_1\times\LL_{1,0}\Delta\times(\LL_{1,1}\times\LL_1)\Delta$&(0,0,0,2,4; 0,0,0,8; 16,0,0; 0,0; 0)\\
$\LL_{1,0}\Delta\times(\LL_{1,1}\times\LL_1^2)\Delta$&(0,0,0,2,4; 0,0,0,8; 16,0,0; 0,0; 32)\\
$\LL_3\times\LL_{1,1}\Delta_2$&(0,0,0,2,4; 0,0,2,4; 1,0,0; 0,0; 8)\\
$\LL_1\times\LL_{1,0}\Delta\times\LL_{1,1}\Delta_2$&(0,0,0,2,4; 0,0,2,4; 8,0,0; 0,0; 16)\\
$\LL_{1,0}\Delta\times(\LL_1\times\LL_{1,1}\Delta)\Delta$&(0,0,0,2,4; 0,0,2,8; 16,0,0; 0,0; 32)\\
$\LL_{1,0}\Delta\times(\LL_{1,1}\times\LL_1)\Delta_2$&(0,0,0,2,4; 0,0,8,16; 32,0,0; 0,0; 64)\\
$\LL_1^2\times\LL_{1,1}\Delta^2$&(0,0,2,4,8; 0,0,0,8; 0,4,0; 0,0; 0)\\
$\LL_1\times\LL_{1,1}\Delta\times(\LL_{1,1}\times\LL_1)\Delta$&(0,0,2,4,8; 0,0,0,8; 0,16,0; 0,0; 0)\\
$\LL_{1,1}\Delta\times(\LL_{1,1}\times\LL_1^2)\Delta$&(0,0,2,4,8; 0,0,0,8; 0,16,0; 32,0; 0)\\
$\LL_1\times\LL_{1,1}\Delta\times\LL_{1,1}\Delta_2$&(0,0,2,4,8; 0,0,0,8; 2,4,0; 16,0; 0)\\
$\LL_{1,1}\Delta\times (\LL_1\times\LL_{1,1}\Delta)\Delta$&(0,0,2,4,8; 0,0,0,8; 2,16,0; 32,0; 0)\\
$\LL_{1,1}\Delta\times(\LL_{1,1}\times\LL_1)\Delta_2$&(0,0,2,4,8; 0,0,0,8; 16,32,0; 64,0; 0)\\
$(\LL_{1,1}\times\LL_1)\Delta^2$&(0,0,2,4,8; 0,0,0,16; 0,32,0; 0,8; 0)\\
$(\LL_{1,1}\times\LL_1)\Delta\times\LL_{1,1}\Delta_2$&(0,0,2,4,8; 0,0,0,16; 2,4,0; 32,0; 0)\\
$\LL_1^3\times\LL_3$&(0,0,2,4,8; 2,0,0,0; 0,0,0; 0,0; 0)\\
$\LL_3\times\LL_1^2\Delta$&(0,0,2,4,8; 2,0,0,0; 0,0,0; 0,0; 16)
\end{tabular}
\end{center}
\normalsize

\bibliographystyle{elsarticle-harv} 
\bibliography{references}

@article {ArasonPfister_Krull,
    AUTHOR = {Arason, J\'on Kristinn and Pfister, Albrecht},
     TITLE = {Beweis des {K}rullschen {D}urchschnittsatzes f\"ur den
              {W}ittring},
   JOURNAL = {Invent. Math.},
  FJOURNAL = {Inventiones Mathematicae},
    VOLUME = {12},
      YEAR = {1971},
     PAGES = {173--176},
      ISSN = {0020-9910,1432-1297},
   MRCLASS = {10C05},
  MRNUMBER = {294251},
MRREVIEWER = {H.\ Gross},
       DOI = {10.1007/BF01404657},
       URL = {https://doi.org/10.1007/BF01404657},
}

@inproceedings {Becher_NoSquareClasses,
    AUTHOR = {Becher, Karim Johannes},
     TITLE = {On the number of square classes of a field of finite level},
 BOOKTITLE = {Proceedings of the {C}onference on {Q}uadratic {F}orms and
              {R}elated {T}opics ({B}aton {R}ouge, {LA}, 2001)},
   JOURNAL = {Doc. Math.},
  FJOURNAL = {Documenta Mathematica},
      YEAR = {2001},
     PAGES = {65--84},
      ISSN = {1431-0635,1431-0643},
   MRCLASS = {11E04 (11E81 16K50)},
  MRNUMBER = {1869389},
MRREVIEWER = {Carl\ Riehm},
}

@article {Bos_StructureWittRing,
    AUTHOR = {Bos, Rik},
     TITLE = {A structure theorem for abstract {W}itt rings containing rigid
              elements},
   JOURNAL = {Nederl. Akad. Wetensch. Indag. Math.},
  FJOURNAL = {Koninklijke Nederlandse Akademie van Wetenschappen.
              Indagationes Mathematicae},
    VOLUME = {51},
      YEAR = {1989},
    NUMBER = {2},
     PAGES = {125--140},
      ISSN = {0019-3577},
   MRCLASS = {11E81 (19G12)},
  MRNUMBER = {1005046},
MRREVIEWER = {Craig\ Cordes},
}

@article {CarsonMarshall_Decomposition,
    AUTHOR = {Carson, Andrew B. and Marshall, Murray A.},
     TITLE = {Decomposition of {W}itt rings},
   JOURNAL = {Canadian J. Math.},
  FJOURNAL = {Canadian Journal of Mathematics. Journal Canadien de
              Math\'ematiques},
    VOLUME = {34},
      YEAR = {1982},
    NUMBER = {6},
     PAGES = {1276--1302},
      ISSN = {0008-414X,1496-4279},
   MRCLASS = {10C05},
  MRNUMBER = {678670},
MRREVIEWER = {Thomas\ C.\ Craven},
       DOI = {10.4153/CJM-1982-089-1},
       URL = {https://doi.org/10.4153/CJM-1982-089-1},
}

@article {Cordes_WittGroupEquivalenve,
    AUTHOR = {Cordes, Craig M.},
     TITLE = {The {W}itt group and the equivalence of fields with respect to
              quadratic forms},
   JOURNAL = {J. Algebra},
  FJOURNAL = {Journal of Algebra},
    VOLUME = {26},
      YEAR = {1973},
     PAGES = {400--421},
      ISSN = {0021-8693},
   MRCLASS = {15A63 (13K05)},
  MRNUMBER = {325656},
MRREVIEWER = {D.\ G.\ James},
       DOI = {10.1016/0021-8693(73)90002-1},
       URL = {https://doi.org/10.1016/0021-8693(73)90002-1},
}

@article {Cordes_FiniteNoQuaternions,
    AUTHOR = {Cordes, Craig M.},
     TITLE = {Quadratic forms over nonformally real fields with a finite
              number of quaternion algebras},
   JOURNAL = {Pacific J. Math.},
  FJOURNAL = {Pacific Journal of Mathematics},
    VOLUME = {63},
      YEAR = {1976},
    NUMBER = {2},
     PAGES = {357--365},
      ISSN = {0030-8730,1945-5844},
   MRCLASS = {10C05},
  MRNUMBER = {417052},
MRREVIEWER = {K.\ Szymiczek},
       URL = {http://projecteuclid.org/euclid.pjm/1102867391},
}

@article {Kula_FieldsQFSchemes,
    AUTHOR = {Kula, Mieczys{\l}aw},
     TITLE = {Fields and quadratic form schemes},
   JOURNAL = {Ann. Math. Sil.},
  FJOURNAL = {Annales Mathematicae Silesianae},
    NUMBER = {13},
      YEAR = {1985},
     PAGES = {7--22},
      ISSN = {0860-2107,2391-4238},
   MRCLASS = {11E81},
  MRNUMBER = {813141},
MRREVIEWER = {Craig\ Cordes},
}

@book {Kula_FinitelyGeneratedWittRings,
    AUTHOR = {Kula, Mieczys{\l}aw},
     TITLE = {Finitely generated {W}itt rings},
    SERIES = {Prace Naukowe Uniwersytetu \'Slpolhk askiego w Katowicach
              [Scientific Publications of the University of Silesia]},
    VOLUME = {1207},
      NOTE = {With Polish and Russian summaries},
 PUBLISHER = {Uniwersytet \'Slpolhk aski, Katowice},
      YEAR = {1991},
     PAGES = {52},
      ISBN = {83-226-0373-8},
   MRCLASS = {11E81 (11E04 11E08)},
  MRNUMBER = {1151834},
MRREVIEWER = {Murray\ Marshall},
}

@article {KnebuschRosenbergWare_WittRings,
    AUTHOR = {Knebusch, Manfred and Rosenberg, Alex and Ware, Roger},
     TITLE = {Structure of {W}itt rings and quotients of {A}belian group
              rings},
   JOURNAL = {Amer. J. Math.},
  FJOURNAL = {American Journal of Mathematics},
    VOLUME = {94},
      YEAR = {1972},
     PAGES = {119--155},
      ISSN = {0002-9327,1080-6377},
   MRCLASS = {16A48 (10C05)},
  MRNUMBER = {296103},
MRREVIEWER = {J.\ S.\ Hsia},
       DOI = {10.2307/2373597},
       URL = {https://doi.org/10.2307/2373597},
}

@article {KulaSzczepanikSzymiczek_8SquareClasses,
    AUTHOR = {Kula, Mieczys{\l}aw and Szczepanik, Lucyna and Szymiczek,
              Kazimierz},
     TITLE = {Quadratic forms over formally real fields with eight square
              classes},
   JOURNAL = {Manuscripta Math.},
  FJOURNAL = {Manuscripta Mathematica},
    VOLUME = {29},
      YEAR = {1979},
    NUMBER = {2-4},
     PAGES = {295--303},
      ISSN = {0025-2611,1432-1785},
   MRCLASS = {10C04},
  MRNUMBER = {545046},
MRREVIEWER = {Craig\ Cordes},
       DOI = {10.1007/BF01303632},
       URL = {https://doi.org/10.1007/BF01303632},
}

@article {KSS_Equivalences,
    AUTHOR = {Kula, M. and Szczepanik, L. and Szymiczek, K.},
     TITLE = {Quadratic form schemes and quaternionic schemes},
   JOURNAL = {Fund. Math.},
  FJOURNAL = {Polska Akademia Nauk. Fundamenta Mathematicae},
    VOLUME = {130},
      YEAR = {1988},
    NUMBER = {3},
     PAGES = {181--190},
      ISSN = {0016-2736,1730-6329},
   MRCLASS = {11E81},
  MRNUMBER = {970903},
MRREVIEWER = {Bessie\ H.\ Kirkwood},
       DOI = {10.4064/fm-130-3-181-190},
       URL = {https://doi.org/10.4064/fm-130-3-181-190},
}

@book {Lam_IntroQF,
    AUTHOR = {Lam, T. Y.},
     TITLE = {Introduction to quadratic forms over fields},
    SERIES = {Graduate Studies in Mathematics},
    VOLUME = {67},
 PUBLISHER = {American Mathematical Society, Providence, RI},
      YEAR = {2005},
     PAGES = {xxii+550},
      ISBN = {0-8218-1095-2},
   MRCLASS = {11Exx},
  MRNUMBER = {2104929},
MRREVIEWER = {K.\ Szymiczek},
       DOI = {10.1090/gsm/067},
       URL = {https://doi.org/10.1090/gsm/067},
}

@book {Marshall_AbstractWittRings,
    AUTHOR = {Marshall, Murray},
     TITLE = {Abstract {W}itt rings},
    SERIES = {Queen's Papers in Pure and Applied Mathematics},
    VOLUME = {57},
 PUBLISHER = {Queen's University, Kingston, ON},
      YEAR = {1980},
     PAGES = {vi+257},
   MRCLASS = {10C05},
  MRNUMBER = {674651},
MRREVIEWER = {Thomas\ C.\ Craven},
}

@incollection {Marshall_ETC,
    AUTHOR = {Marshall, M.},
     TITLE = {The elementary type conjecture in quadratic form theory},
 BOOKTITLE = {Algebraic and arithmetic theory of quadratic forms},
    SERIES = {Contemp. Math.},
    VOLUME = {344},
     PAGES = {275--293},
 PUBLISHER = {Amer. Math. Soc., Providence, RI},
      YEAR = {2004},
      ISBN = {0-8218-3441-X},
   MRCLASS = {11E81 (12F10)},
  MRNUMBER = {2060204},
MRREVIEWER = {Thomas\ C.\ Craven},
       DOI = {10.1090/conm/344/06224},
       URL = {https://doi.org/10.1090/conm/344/06224},
}

@article {Merkurjev_KTheory,
    AUTHOR = {Merkurjev, A. S.},
     TITLE = {On the norm residue symbol of degree {$2$}},
   JOURNAL = {Dokl. Akad. Nauk SSSR},
  FJOURNAL = {Doklady Akademii Nauk SSSR},
    VOLUME = {261},
      YEAR = {1981},
    NUMBER = {3},
     PAGES = {542--547},
      ISSN = {0002-3264},
   MRCLASS = {12A60 (13A20)},
  MRNUMBER = {638926},
MRREVIEWER = {Thomas\ Zink},
}

@article {DickmannMiragliaSpecialGroups,
    AUTHOR = {Dickmann, M. A. and Miraglia, F.},
     TITLE = {Special groups: {B}oolean-theoretic methods in the theory of
              quadratic forms},
      NOTE = {With appendixes A and B by Dickmann and A. Petrovich},
   JOURNAL = {Mem. Amer. Math. Soc.},
  FJOURNAL = {Memoirs of the American Mathematical Society},
    VOLUME = {145},
      YEAR = {2000},
    NUMBER = {689},
     PAGES = {xvi+247},
      ISSN = {0065-9266,1947-6221},
   MRCLASS = {11E81 (12D15 19G12)},
  MRNUMBER = {1677935},
MRREVIEWER = {Murray\ Marshall},
       DOI = {10.1090/memo/0689},
       URL = {https://doi.org/10.1090/memo/0689},
}

@misc{efrat2025elementarytypeconjecturemaximal,
      title={The Elementary Type Conjecture for Maximal Pro-p Galois groups}, 
      author={Ido Efrat},
      year={2025},
      eprint={2509.10168},
      archivePrefix={arXiv},
      primaryClass={math.NT},
      url={https://arxiv.org/abs/2509.10168}, 
}

@article{JacobWareETCGalois,
	author = {Jacob, Bill and Ware, Roger},
	date = {1991/12/01},
	date-added = {2026-04-20 11:38:50 +0200},
	date-modified = {2026-04-20 11:38:50 +0200},
	doi = {10.1007/BF02571520},
	id = {Jacob1991},
	isbn = {1432-1823},
	journal = {Mathematische Zeitschrift},
	number = {1},
	pages = {193--208},
	title = {Realizing dyadic factors of elementary type Witt rings and pro-2 Galois groups},
	url = {https://doi.org/10.1007/BF02571520},
	volume = {208},
	year = {1991},
	bdsk-url-1 = {https://doi.org/10.1007/BF02571520}}

@article {MarshallYucasAP3,
    AUTHOR = {Marshall, Murray and Yucas, Joseph},
     TITLE = {Linked quaternionic mappings and their associated {W}itt
              rings},
   JOURNAL = {Pacific J. Math.},
  FJOURNAL = {Pacific Journal of Mathematics},
    VOLUME = {95},
      YEAR = {1981},
    NUMBER = {2},
     PAGES = {411--425},
      ISSN = {0030-8730,1945-5844},
   MRCLASS = {10C05 (10C01)},
  MRNUMBER = {632195},
MRREVIEWER = {Ron\ Brown},
       URL = {http://projecteuclid.org/euclid.pjm/1102735078},
}

@article {Szczepanik_RadicalNotExceeding16,
    AUTHOR = {Szczepanik, Lucyna},
     TITLE = {Fields and quadratic form schemes with the index of radical
              not exceeding {$16$}},
   JOURNAL = {Ann. Math. Sil.},
  FJOURNAL = {Annales Mathematicae Silesianae},
    NUMBER = {13},
      YEAR = {1985},
     PAGES = {23--46},
      ISSN = {0860-2107,2391-4238},
   MRCLASS = {11E81},
  MRNUMBER = {813142},
MRREVIEWER = {Craig\ Cordes},
}

@article {Szczepanik_QFSnontrivialrad,
    AUTHOR = {Szczepanik, L.},
     TITLE = {Quadratic form schemes with nontrivial radical},
   JOURNAL = {Colloq. Math.},
  FJOURNAL = {Colloquium Mathematicum},
    VOLUME = {49},
      YEAR = {1985},
    NUMBER = {2},
     PAGES = {143--160},
      ISSN = {0010-1354,1730-6302},
   MRCLASS = {11E81},
  MRNUMBER = {830799},
MRREVIEWER = {Daniel\ B.\ Shapiro},
       DOI = {10.4064/cm-49-2-143-160},
       URL = {https://doi.org/10.4064/cm-49-2-143-160},
}

@article {Cordes_BinaryRepr4,
    AUTHOR = {Cordes, Craig M.},
     TITLE = {Abstract {W}itt rings when certain binary forms represent
              exactly four elements},
   JOURNAL = {Canad. J. Math.},
  FJOURNAL = {Canadian Journal of Mathematics. Journal Canadien de
              Math\'ematiques},
    VOLUME = {45},
      YEAR = {1993},
    NUMBER = {6},
     PAGES = {1184--1199},
      ISSN = {0008-414X,1496-4279},
   MRCLASS = {11E81},
  MRNUMBER = {1247541},
MRREVIEWER = {Murray\ Marshall},
       DOI = {10.4153/CJM-1993-066-7},
       URL = {https://doi.org/10.4153/CJM-1993-066-7},
}

@misc{Repository ,
  author  = {Lorenz, Nico and Sch\"onert, Alexander},
  title   = {Normal Quaternionic Matrices},
  year    = {2026},
  howpublished = {Zenodo},
  doi     = {10.5281/zenodo.21812709},
}

\end{document}